\DeclareMathOperator{\sgn}{sgn}
\DeclareMathOperator*{\slim}{s-lim}
\DeclareMathOperator*{\wlim}{w-lim}
\newcommand{\jBra}[1]{\langle #1 \rangle}
\newcommand{\bbR}{\mathbb{R}}
\newcommand{\bbC}{\mathbb{C}}
\newcommand{\bbOne}{\mathds{1}}
\newcommand{\cF}{\mathcal{F}}
\newcommand{\uwb}{u_\textup{wb}}
\newcommand{\uloc}{u_\textup{loc}}
\newcommand{\ulow}{u_\textup{low}}
\newcommand{\urem}{u_\textup{rem}}
\newcommand{\ulin}{u_\textup{lin}}
\newcommand{\Jfree}{J_\textup{free}}
\newcommand{\tJfree}{\tilde{J}_\textup{free}}
\newcommand{\Pin}{P^\textup{in}}
\newcommand{\Pout}{P^\textup{out}}
\newcommand{\Plow}{P^\textup{low}}
\newcommand{\Djfwd}{\mathcal{D}^{(j, \textup{fwd})}}
\newcommand{\Dzfwd}{\mathcal{D}^{(0, \textup{fwd})}}
\newcommand{\Djbwd}{\mathcal{D}^{(j, \textup{bwd})}}
\newtheorem{thm}{Theorem}
\newtheorem{prop}[thm]{Proposition}
\newtheorem{lemma}[thm]{Lemma}
\newtheorem{cor}[thm]{Corollary}
\newtheorem{rmk}{Remark}
\begin{document}
\title[Weakly localized states have localized energy]{Weakly localized states of one dimensional Schr\"odinger equations have localized energy}
\author[Avy Soffer]{Avy Soffer}
 \address[Avy Soffer]{\newline
        Department of Mathematics, \newline
         Rutgers University, New Brunswick, NJ 08903 USA.}
  \email[]{soffer@math.rutgers.edu}
\author[Gavin Stewart]{Gavin Stewart}
 \address[Gavin Stewart]{\newline
        Department of Mathematics, \newline
         Arizona State University, Tempe, AZ 85281 USA.}
  \email[]{gavin.stewart@rutgers.edu}
  \thanks{2020 \textit{ Mathematics Subject Classification.}   35Q55  }
\thanks{
A.Soffer is supported in part by NSF-DMS Grant number 2205931
}
\date{\today}
\begin{abstract} 
    We study the asymptotics of the Schr\"odinger equation with time-dependent potential in dimension one.  Assuming that the potential decays sufficiently rapidly as $|x| \to \infty$, we prove that the solution can be written as the sum of a free wave $e^{-it\Delta} u_+$ and a weakly bound component $u_{\textup{wb}}(t)$.  Moreover, we show that the weakly bound part decomposes as $u_{\textup{wb}}(t) = u_{\textup{loc}}(t) + o_{\dot{H}^1}(1)$, where $\partial_x u_\textup{loc}(t)$ is localized near the origin uniformly in time.  Since decay conditions on the potential do not preclude resonances unless $d \geq 5$, our results can be seen as a natural extension of~\cite{taoConcentrationCompactAttractor2007,soffer2023soliton} to the lower-dimensional case.
\end{abstract}

\maketitle

\section{Introduction}

We will study the equation
\begin{equation}\label{eqn:main-eqn}
    i\partial_t u - \Delta u + V(x,t)u = 0
\end{equation}
where the potential $V$ is assumed to satisfy the conditions
\begin{equation}\label{eqn:V-decay}
    \lVert \jBra{x}^\sigma V \rVert_{L^\infty_{t,x}} < \infty
\end{equation}
and
\begin{equation}\label{eqn:dV-decay}
    \lVert \jBra{x}^{\sigma+1} \partial_x V \rVert_{L^\infty_{t,x}} < \infty
\end{equation}
for some $\sigma > 2$.  

Based on the localization of the potential, it would be natural to expect that solutions $u$ to~\eqref{eqn:main-eqn} decompose asymptotically into a localized bound state component $\uloc$ together with linearly scattering radiation.  Currently, such a precise characterization of solutions is only known in dimensions $d \geq 5$~\cite{taoConcentrationCompactAttractor2007,taoGlobalCompactAttractor2008b,soffer2023soliton}.  In lower dimensions, the best we can say is that $u$ decomposes asymptotically into linearly scattering radiation and a slowly spreading \textit{weakly bound} state $\uwb$:
\begin{equation*}
    u(x,t) = \uwb(x,t) + e^{-it\Delta}u_+ + o_{H^1}(1)
\end{equation*}
where $\uwb(t)$ is becomes orthogonal to any solution to the free Schr\"odinger equation as $t \to \infty$ and is localized to the region $|x| < t^{1/2+}$:
\begin{equation*}
    \lim_{t \to \infty}\lVert \uwb \rVert_{L^2(|x| \geq t^{1/2+})} = 0
\end{equation*}
see~\cite{soffer2022large,sofferScatteringLocalizedStates2024a,liu2025large,soffer2024new}.  The significance of the threshold at dimension $5$ was explained in~\cite{taoConcentrationCompactAttractor2007} in terms of the applicability of the double Duhamel trick, which can also be used to prove the nonexistence of resonances in dimension $d \geq 5$ for (time-independent) potentials satisfying~\eqref{eqn:V-decay}.  

Since resonances lie in $\dot{H}^1$ but not in $L^2$, it is natural to expect that we can prove localization (and not just slow spreading) for $\partial_x \uwb$.  Our main result in~\Cref{thm:main-thm} shows that this is indeed the case: The weakly bound component $\uwb$ decomposes as $\uwb(x,t) = \uloc(x,t) + o_{\dot{H}^1}(1)$, where $\jBra{x}^\theta \partial_x \uloc \in L^\infty_t L^2_x$ for any $\theta \in (0,1)$.  A similar result was proved by Tao in~\cite{taoAsymptoticBehaviorLarge2004b} for the radial cubic NLS in dimension $3$.

\subsection{Previous work}

In the case where $V$ is time-independent, the asymptotics for~\eqref{eqn:main-eqn} are by now quite well understood, even if we significantly relax the decay assumptions~\cref{eqn:V-decay,eqn:dV-decay}: See~\cite{derezinskiScatteringTheoryClassical1997,yafaevMathematicalScatteringTheory2010,reedMethodsModernMathematical2003} and the references therein.  In this case, the spectral theorem allows us to decompose solutions into two parts: A discrete part, which can be written as a time quasiperiodic sum of eigenfunctions of $-\Delta + V$; and a continuous part, which under the assumption~\eqref{eqn:V-decay} with $\sigma > 1$ approaches (quantum) free flow as $t \to \infty$.  In particular, the long-time asymptotics are given by
\begin{equation}\label{eqn:u-decomp-intro}
    u(x,t) = \sum e^{-itE_n} \psi_n(x) + e^{-it\Delta} u_+ + o_{{H^1}}(1)
\end{equation}
A similar decomposition also holds for coupled systems of Schr\"odinger equations describing the interaction of multiple particles.  There, we must consider multiple scattering channels (corresponding to different particles becoming bound) with the conclusion that solutions asymptotically arrange themselves into bound states and free particles which become widely separated as $t \to \infty$~\cite{sigal1987n,enss1983asymptotic,hunziker2000time,derezinskiScatteringTheoryClassical1997}.  

Much less is understood in the case where $V$ is allowed to vary in time.  If the time dependence is small, it is often possible to recover the same results as in the time-independent case.  For example, for a small, time-dependent potential in dimension $3$, it is possible to prove global Strichartz estimates, which then imply scattering~\cite{rodnianskiTimeDecaySolutions2004}.  There are also a number of results on nonlinear Schr\"odinger equations with linear potentials~\cite{germainNonlinearResonancesPotential2015,germainNonlinearSchrodingerEquation2018,naumkinNonlinearSchrodingerEquations2018,chen1dimensionalNonlinearSchrodinger2022,stewartAsymptoticsCubic1D2024,frohlichSolitaryWaveDynamics2004,bronskiSolitonDynamicsPotential2000,holmerSolitonInteractionSlowly2008,sofferSelectionGroundState2004,soffer2005theory,chen1dCubicNLS2024,delortModifiedScatteringOdd2016,naumkinSharpAsymptoticBehavior2016,chenLongtimeDynamicsSmall2023a,pusateriBilinearEstimatesPresence2024a,leger3DQuadraticNLS2020,legerGlobalExistenceScattering2021}, including notable results on the asymptotic stability of solitons~\cite{chenLongtimeDynamicsSmall2023a,collotAsymptoticStabilitySolitary2023,chenAsymptoticStabilityMultisolitons2025}; the interested reader can find a review in~\cite{germainReviewAsymptoticStability2024}. See also \cite{gang2006relaxation,gang2008asymptotic,gang2006soliton}

When $V$ has a large time-dependent part, it has not yet been possible to prove results as precise as~\eqref{eqn:u-decomp-intro}.  Currently, the most complete results are those of~\cite{taoGlobalCompactAttractor2008b,soffer2023soliton}, where solutions to radial nonlinear Schr\"odinger-type equations in dimensions $d \geq 5$ were shown to decompose as
\begin{equation}\label{eqn:u-decomp-intro-ii}
    u(x,t) = \uloc(x,t) + e^{-it\Delta}u_+ + o_{H^1}(1)
\end{equation}
for some function $\uloc$ with $\jBra{x}^\delta \uloc \in L^\infty_tL^2_x$ for some constant $\delta$ depending on the dimension.   In the non-radial case,~\cite{taoConcentrationCompactAttractor2007} proved that solutions decompose as
\begin{equation*}
    u(x,t) = \sum \uloc^{(j)}(x-x_j(t),t) + e^{-it\Delta}u_+ + o_{H^1}(1)
\end{equation*}
for some localized functions $\uloc^{(j)}$ and $x_j(t)$ such that $|x_i(t) - x_j(t)| \to \infty$ ($i\neq j$).  Here, localization means $\lim_{R \to \infty} \sup_t \lVert \uloc^{(j)}(x,t) \rVert_{L^2(|x| \geq R)} = 0$.

In lower dimensions, it has not yet been possible to prove such a precise decomposition as~\eqref{eqn:u-decomp-intro-ii}.  However, we do have the weaker decomposition
\begin{equation*}
    u(x,t) = \uwb(x,t) + e^{it\Delta}u_+ + o_{H^1}(1)
\end{equation*}
where $\uwb(t)$ is concentrated in the region $|x| \lesssim t^{1/2+}$~\cite{soffer2022large,liu2025large,sofferScatteringLocalizedStates2024a}.  Moreover, in dimension $3$ for the radial cubic nonlinear Schr\"odinger equation, Tao proved~\cite{taoAsymptoticBehaviorLarge2004b} that we can further decompose $\uwb$ as $\uwb(t) = \uloc(t) + o_{\dot{H}^1}(1)$, where
\begin{equation}\label{eqn:tao-u-decomp}
    |\nabla_x^j \uloc(x,t)| \lesssim \jBra{x}^{-3/2 - j +}
\end{equation}

\subsection{Main result and sketch of the proof}

The main result in our paper states that solutions of~\eqref{eqn:main-eqn} decompose into a linearly scattering radiation term and a term with localized kinetic energy:

\begin{thm}\label{thm:main-thm}
    Suppose $u$ solves~\eqref{eqn:main-eqn} with a potential $V$ satisfying 
    \begin{equation}\label{eqn:V-bds}
    \sup_t |V(x,t)| + |\jBra{x} \partial_x V(x,t)| \lesssim \jBra{x}^{-\sigma}
    \end{equation}
    and that $\sup_t\lVert u(t) \rVert_{H^1} < \infty$, and that $\theta \in \left(0, 1\right)$ is a constant.  Then, we have the decomposition
    \begin{equation}\label{eqn:main-thm-decomp}
        u(t) = e^{-it\Delta} u_+ + \uloc(t) + o_{\dot{H}^1}(1)
    \end{equation}
    where $u_+ \in H^1$ and $\uloc$ has a localized derivative in the sense that
    \begin{equation}\label{eqn:uloc-desired}
        \jBra{x}^\theta \partial_x\uloc \in L^\infty_tL^2_x
    \end{equation}
    Moreover, if $V$ satisfies the symbol-type bounds
    \begin{equation}\label{eqn:V-symb-bds}
        \sup_{k \in \{1,2,\cdots,n\}}\sup_t |\jBra{x}^k\partial_x^k V(x,t)| \lesssim \jBra{x}^{-\sigma}
    \end{equation}
    for $n > 1$ and $\theta \in \left(0, 1\right)$ then we can redefine $\uloc$ in~\eqref{eqn:main-thm-decomp} such that
    \begin{equation}\label{eqn:uloc-higher-order}
        \jBra{x}^{\theta k}\partial_x^{k} \uloc \in L^\infty_t L^2_x
    \end{equation}
    for $1 \leq k \leq n$.
\end{thm}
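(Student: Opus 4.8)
The plan is to reduce the theorem to a statement about the weakly bound part $\uwb$ and then extract from $\uwb$ a piece with localized derivative by a Duhamel analysis in which applying $\partial_x$ removes the only obstruction---the zero-frequency, resonance-type behaviour---to genuine $L^2_x$ localization. The cited work already supplies the decomposition $u(t)=e^{-it\Delta}u_++\uwb(t)+o_{H^1}(1)$, the confinement $\lVert\uwb\rVert_{L^2(|x|\ge t^{1/2+})}\to 0$, the weak-binding property, and the free-channel limit $u_+=\slim e^{it\Delta}u(t)$ (in the appropriate channel sense); from these one gets the forward Duhamel representation $\uwb(t)=-i\int_t^\infty e^{-i(t-s)\Delta}(Vu)(s)\,ds$, whose iteration in $V$ produces the terms $\Djfwd$, with each source $(Vu)(s)$ lying uniformly in $\jBra{x}^{-\sigma}L^2_x$ because $u\in L^\infty_tH^1$ and $V$ decays.

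For \eqref{eqn:uloc-desired} the key point is that $\partial_x u$ solves \eqref{eqn:main-eqn} with the strictly more localized right-hand side $-(\partial_x V)u$, equivalently $\partial_x\uwb(t)=-i\int_t^\infty e^{-i(t-s)\Delta}\partial_x(Vu)(s)\,ds$ with a \emph{mean-zero} source $\partial_x(Vu)$ (an exact derivative of the decaying quantity $Vu$); on the Fourier side this is precisely the first-order vanishing at $\xi=0$ that separates $\dot H^1$ data from $L^2$ data and that makes resonance tails admissible but integrable. I would split the source by a low-frequency cutoff $\Plow$ ($|\xi|\gtrsim1$ versus $|\xi|\ll1$) and each piece by an incoming/outgoing cutoff $1=\Pin+\Pout$ built from the dilation generator. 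The high-frequency and outgoing contributions disperse away from the localized source under $e^{-i(t-s)\Delta}$, so the corresponding portion of the integral converges to $\partial_x\uloc(t)$ with $\jBra{x}^\theta\partial_x\uloc\in L^\infty_tL^2_x$ for every $\theta<1$; the hypothesis $\sigma>2$ enters when the weight $\jBra{x}^{2\theta}$ is commuted across the propagator in a double Duhamel estimate and paid against the decay of two source factors, so that $2\theta<\sigma$ with $\theta<1$ is what is required. The remaining incoming/low-frequency long-time portion is placed in the $o_{\dot H^1}(1)$ error, using the weak-binding property (so $(Vu)(s)\to0$ in $L^2$), the slow-spreading bound (which forces $\uwb(t)$ to be concentrated at frequencies $\lesssim t^{-1/2}$), and a propagation estimate of the form $\int_1^\infty t^{-1}\lVert\chi(|x|\gtrsim t^{1/2+})\partial_x u(t)\rVert_{L^2}^2\,dt<\infty$ obtained from a positive commutator adapted to $\partial_x u$.

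The step I expect to be the main obstacle is precisely this low-frequency, long-time portion. Near $\xi=0$ the operator $\int_0^\infty e^{i\tau\Delta}(\cdot)\,d\tau$ behaves like $(-\Delta)^{-1}$, which is unbounded on weighted $L^2_x$, and every estimate that takes absolute values of the oscillatory kernel fails there because positive weights grow under free evolution; moreover one cannot integrate by parts in $\tau$, since no time regularity of $V$ is assumed. The only usable cancellations are the first-order vanishing of $\widehat{\partial_x(Vu)}$ at the origin and the slow decay of $(Vu)(s)$ coming from weak binding, and the delicate part is to make these interact---with no derivative loss and with no quantitative rate on $(Vu)(s)$ beyond what is given---so that the tail is integrable in $\dot H^1$. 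This is where the slow-spreading input imported from the cited work does the essential work.

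For \eqref{eqn:uloc-higher-order} I would argue by induction on $k$, assuming the decomposition with $\jBra{x}^{\theta j}\partial_x^j\uloc\in L^\infty_tL^2_x$ for $j<k$ (tracking the error in a topology slightly stronger than $\dot H^1$, which is harmless since the base decomposition is $o_{H^1}$). Differentiating \eqref{eqn:main-eqn} $k$ times gives the equation for $\partial_x^k u$ with source $-\sum_{j=1}^k\binom{k}{j}(\partial_x^jV)(\partial_x^{k-j}u)$; by \eqref{eqn:V-symb-bds} each $\partial_x^jV$ carries the decay $\jBra{x}^{-\sigma-j}$, and by the inductive hypothesis $\partial_x^{k-j}u=\partial_x^{k-j}\uloc+\partial_x^{k-j}e^{-it\Delta}u_++(\text{error})$ with the first term localized at weight $\jBra{x}^{-\theta(k-j)}$ and the free term contributing a source that decays in $s$ like $s^{-(k-j)}$ (the localized $\partial_x^jV$ only sees the frequencies $\lesssim s^{-1}$ of $e^{-is\Delta}u_+$), hence harmless. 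Counting exponents, multiplying the worst source term by $\jBra{x}^{\theta k}$ leaves the same positive margin as in the case $k=1$, because the extra weight $\jBra{x}^{\theta(k-1)}$ demanded is exactly matched by the extra localization of the lower-order derivative; rerunning the first-order analysis at each stage and redefining $\uloc$ to absorb the newly localized contributions then yields \eqref{eqn:uloc-higher-order}.
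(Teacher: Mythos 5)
Your proposal shares the broad outline of the paper (reduce to $\uwb$, split by incoming/outgoing and low-frequency projections, run a Duhamel analysis, iterate for higher derivatives), but it diverges from the paper in ways that leave genuine gaps.

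The most serious problem is that you work only with the single Duhamel representation $\uwb(t)=-i\int_t^\infty e^{-i(t-s)\Delta}(Vu)(s)\,ds$ and hope to localize only the outgoing/high-frequency piece, consigning the incoming piece to the $o_{\dot H^1}(1)$ error. But the incoming part of $\uwb$ is not small: it is merely localized, and there is no mechanism that makes it vanish as $t\to\infty$. The paper uses \emph{two} Duhamel representations: the forward one $\int_0^t e^{-i(t-s)\Delta}Vu\,ds$ to localize $\Pin_j\uwb$ (there $\tau=t-s>0$, so the phase derivative $(x-y)+2\tau\xi$ has a fixed sign when $x$ and $\xi$ have the same sign) and the backward one $\int_t^\infty$ to localize $\Pout_j\uwb$. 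With only one representation you can control only one half of phase space, and the other half remains unaccounted for.

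Your treatment of the low-frequency part is also not the paper's and, as stated, does not close. You propose a fixed cutoff $|\xi|\gtrsim 1$ and then rely on a claimed first-order vanishing of $\widehat{\partial_x(Vu)}$ at $\xi=0$ --- which you yourself flag as the main obstacle. The paper avoids any such Fourier-side cancellation: the low-frequency threshold is scale-dependent, $|\xi|\lesssim 2^{-\theta j}$ on the annulus $|x|\sim 2^j$, so that in $\jBra{x}^\theta\partial_x\Plow_j v$ the derivative costs $2^{-\theta j}$ and exactly cancels the weight $2^{\theta j}$, yielding $\sum_j 2^{2\theta j}\lVert F_{2^j}(|x|)F_{\leq 2^{-\theta j}}(|D|)\partial_x v\rVert_{L^2}^2\lesssim\lVert v\rVert_{L^2}^2$. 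This is the whole source of the constraint $\theta<1$, and it works for an arbitrary bounded $v$ with no vanishing assumptions; it is also robust enough to be iterated for higher $k$, where several powers of $\jBra{x}^\theta\partial_x$ must be absorbed. Relatedly, your "double Duhamel estimate" with $2\theta<\sigma$ is not part of the paper: the paper explicitly notes that double Duhamel is precisely what becomes available only in $d\geq 5$, and the one-dimensional argument replaces it with the two one-sided representations above together with the key Duhamel estimate of~\Cref{lem:in-out-lem}. Finally, for the higher-order bounds the paper does not differentiate~\eqref{eqn:main-eqn}: it redefines $u_{\textup{loc},n}$ by iterating the projected Duhamel integrals against $Vu_{\textup{loc},n-1}$ and then proves (Lemma~\ref{lem:decay}) that the discrepancy $u-u_{\textup{loc},n-1}$ feeds into a remainder-type term, so that only $u_{\textup{loc}}$ (for which the inductive symbol bounds are available), never $u$, ever needs to be differentiated.
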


\begin{rmk}
    It is not clear whether $\uloc$ is also localized in $L^2$ or not.  We note that~\cite{soffer2022existence} showed that in high dimensions, it is possible to adiabatically rescale a potential $V$ that admits a bound state in such a way that $\uwb$ (and hence $\uloc$) spreads at rate $t^{1/2-}$, nearly saturating the upper bound of $t^{1/2+}$.  In particular, these self-similar bound states satisfy both~\eqref{eqn:uloc-desired} and~\eqref{eqn:uloc-higher-order}.
\end{rmk}

To prove~\Cref{thm:main-thm}, we first extract the linear radiation.  To do this, we define the \textit{free channel projection operator} $\Jfree(t)$ as
\begin{equation*}
    \Jfree(t) = e^{-it\Delta} F_{\leq t^\alpha}(|x|) e^{it\Delta} F_{\geq t^{-\delta}}(|D|)
\end{equation*}
Since solutions to the free Schr\"odinger equation concentrate on the phase-space support of $\Jfree$, we define
\begin{equation*}
    u_+ = \Omega_\textup{free}u_0 = \slim_{t \to \infty} e^{it\Delta} \Jfree(t) u(t)
\end{equation*}
Based on the decay of $V$, we can use the techniques of~\cite{soffer2022large,sofferScatteringLocalizedStates2024a} to prove that the above limit exists in $H^1$, allowing us to write
\begin{equation*}
    u(t) = \uwb(t) + e^{-it\Delta} u_+ + o_{H^1}(1).
\end{equation*}
with $\uwb(t) = (I-\Jfree(t)) u(t)$.  

In addition to the standard Duhamel representation
\begin{equation}\label{eqn:intro-duhamel-fwd}\begin{split}
    \uwb(t) =& (I-\Jfree(t)) u(t)\\
            =& (I-\Jfree(t)) e^{-it\Delta} u_0 + i (I-\Jfree(t)) \int_0^t e^{-i(t-s)\Delta} V(x,s) u(s)\; ds
\end{split}\end{equation}
we can take advantage of the fact that $(I - \Jfree(t)) e^{-it\Delta}$ converges to $0$ in the weak operator topology to write
\begin{equation}\label{eqn:intro-duhamel-bwd}
    \uwb(t) = -i\int_t^\infty e^{-i(t-s)\Delta} \Jfree(s) V(x,s) u(s)\;ds + \int_t^\infty e^{-i(t-s)\Delta} \partial_s\Jfree(s) u(s)\;ds
\end{equation}

Examining the representation~\eqref{eqn:intro-duhamel-fwd}, we see at once that $(I-\Jfree(t))e^{-it\Delta} u_0 \to 0$ in $H^1$.  In the backward in time representation~\eqref{eqn:intro-duhamel-bwd}, a careful analysis using propagation estimates shows that as $t \to \infty$, 
\begin{equation*}\left\lVert \int_t^\infty e^{-i(t-s)\Delta} \partial_s \Jfree(s) u(s)\;ds\right\rVert_{H^1} \to 0\end{equation*}
Thus, in each of the representations~\cref{eqn:intro-duhamel-bwd,eqn:intro-duhamel-fwd}, the main contribution comes from the Duhamel integrals involving $Vu$.  Because the time evolution in each of these integrals is in opposite directions, we can employ an incoming/outgoing decomposition to obtain improved decay in appropriate regions of phase space.  This idea goes back to the work of Enss~\cite{enss1983asymptotic}, and has been instrumental in the field~\cite{mourre1979link,rodnianskiChapter11Longtime2009,soffer2023soliton}.  Following Tao~\cite{taoAsymptoticBehaviorLarge2004b}, we will define our projections in terms of Fourier and physical space projections, although definitions in terms of spectral projectors of the dilation operator $A = x \cdot \nabla + \nabla \cdot x$ are also possible.

To define the incoming/outgoing decomposition and explain how it leads to improved decay, we recall that the Schr\"odinger propagator $e^{-it\Delta}$ is the quantum version of the classical flow $x \mapsto x + 2\xi t$.  Using this heuristic, we see that if we project to positive frequencies, the propagator $e^{-i(t-s)\Delta}$ will tend to move waves localized near the origin to the right (outward) for~\eqref{eqn:intro-duhamel-fwd} resulting in improved decay in the `incoming' region $x < 0$, while $e^{-i(t-s)\Delta}$ moves waves to the left (inward) for~\eqref{eqn:intro-duhamel-bwd}, which gives better decay in the `outgoing' region $x > 0$.  A similar heuristic also holds for negative frequencies, so as a first approximation, we might define
\begin{equation*}\begin{split}
    \Pin_{(\textup{naive})} =& \bbOne_{x > 0} \bbOne_{D > 0} + \bbOne_{x < 0} \bbOne_{D < 0}\\
    \Pout_{(\textup{naive})} =& \bbOne_{x < 0} \bbOne_{D > 0} + \bbOne_{x > 0} \bbOne_{D < 0}
\end{split}\end{equation*}
However, this is too na\"ive, as sharp cut-offs in frequency are not compatible with spatial localization.  To fix this, we split out the low-frequencies and instead work with incoming, outgoing, and low-frequency projections, which, roughly, are given by 
\begin{equation*}\begin{split}
    \Pin =& \bbOne_{x > 0} F_{\geq \jBra{x}^{-\theta}}(D) + \bbOne_{x < 0} F_{\geq \jBra{x}^{-\theta}}(-D)\\
    \Pout =& \bbOne_{x < 0} F_{\geq \jBra{x}^{-\theta}}(D) + \bbOne_{x > 0} F_{\geq \jBra{x}^{-\theta}}(-D)\\
    \Plow =& F_{\leq \jBra{x}^{-\theta}}(|D|)
\end{split}\end{equation*}
for $\theta \in (0,1)$.  (See~\Cref{sec:in-out} for precise definitions).  Thus, by writing
\begin{equation*}
    \uwb(t) = \Pin \uwb(t) + \Pout \uwb(t) + \Plow \uwb(t)
\end{equation*}
we can use the forward-in-time Duhamel integral~\eqref{eqn:intro-duhamel-fwd} to obtain spatial decay for the incoming term $\Pin \uwb(t)$ and the backward-in-time integral~\eqref{eqn:intro-duhamel-bwd} to get spatial decay for $\Pout \uwb(t)$.  Finally, the terms $\Plow \uwb(t)$ are localized to frequencies $|\xi| \lesssim \jBra{x}^{-1+}$, so their derivatives are manifestly localized.  This suffices to prove~\eqref{eqn:uloc-desired}.

In only remains to prove the higher-order bound~\eqref{eqn:uloc-higher-order}.  The issue here is that the incoming/outgoing projectors only allow us to gain one factor of $\jBra{x}^\theta \partial_x$:
\begin{equation*}
    \left\lVert (\jBra{x}^\theta \partial_x)^2 \Pin \int_0^t e^{-i(t-s)\Delta} f(s)\;ds \right\rVert_{L^2} \lesssim \sum_{m = 0}^{1} \lVert \jBra{x}^\sigma (\jBra{x}^\theta \partial_x)^m f(s) \rVert_{L^\infty_t H^\epsilon_x}
\end{equation*}
and similarly for the outgoing projection.  For our application, $f(s) = Vu(s)$.  If we could replace $Vu$ with $V\uloc$, we could simply proceed by induction.  Indeed, this is essentially the approach taken in~\cite{taoGlobalCompactAttractor2008b} to obtain spatial decay in $d \geq 5$.  However, this would require us to prove that $\uloc$ is a solution to~\eqref{eqn:main-eqn}, which seems out of reach in lower dimensions.  Instead, recalling the decomposition~\eqref{eqn:main-thm-decomp}, we write
\begin{equation*}
    Vu = V\uloc + V (e^{-it\Delta}u_+ \urem)
\end{equation*}
where $\urem(t)$ vanishes in $\dot{H}^1$ as $t \to \infty$.  By leveraging the properties of the incoming and outgoing projectors and the decay of $V (e^{-it\Delta}u_+ \urem)$ in time, we find that
\begin{equation*}
    \left\lVert \partial_x \Pin \int_0^t e^{-i(t-s)\Delta} V (e^{-is\Delta}u_+ \urem(s))\;ds \right\rVert_{H^1} \to 0
\end{equation*}
and similarly for the outgoing projection.  Absorbing these terms into the remainder term $o_{\dot{H}^1}(1)$ thus gives~\eqref{eqn:uloc-higher-order} with $n = 2$, and further iteration gives the result for any $n$.

\section{Preliminaries}
\subsection{Notation and conventions}

We use the notation $A \lesssim B$ to mean that $A \leq CB$ for some fixed implicit constant $C>0$.  If $A \lesssim B$ and $B \lesssim A$, then we write $A \sim B$.  For implicit constants that depend on parameters, we will write
$A \lesssim_P B$ to mean $A \leq C_P B$ for a constant $C_P$ which is allowed to depend on $P$.

Given a set $K \subset \bbR$, we will denote the indicator function of $K$ by $\bbOne_K(x)$.  We also define the smooth functions $F, F_\leq, F_\geq: \bbR \to [0,1]$ such that $F$ is supported on $[1/2, 2]$ with
$$\sum_{j =-\infty}^\infty F\left(\frac{x}{2^j}\right) = \bbOne_{\bbR^+}(x)$$
and
\begin{equation*}\begin{split}
    F_{\leq}(x) =& \sum_{j=-\infty}^{0} F\left(\frac{x}{2^j}\right)\\
    F_{\geq}(x) =& \sum_{j=0}^{\infty} F\left(\frac{x}{2^j}\right)
\end{split}
\end{equation*}
Given $C > 0$, we define the functions
\begin{equation*}\begin{split}
    F_C(x) =& F(x/C)\\
    F_{\leq C}(x) =& F_\leq(x/C)\\
    F_{< C}(x) =& F_{\leq C/2}(x)\\
    F_{\lesssim C}(x) =& F_{\leq C 2^{10}}(x)\\
    F_{\ll C}(x) =& F_{\leq C 2^{-10}}(x)\\
    F_{\sim C}(x) =& F_{\lesssim C}(x) - F_{\ll C}(x)\\
    F_{\geq C}(x) =& F_\geq(x/C)\\
    F_{> C}(x) =& F_{\geq 2C}(x)\\
    F_{\gtrsim C}(x) =& F_{\geq C 2^{-10}}(x)\\
    F_{\gg C}(x) =& F_{\geq C 2^{11}}(x)
\end{split}\end{equation*}
In particular, we have that
\begin{equation*}
    F_{< C} + F_C + F_{> C} = \bbOne_{\bbR^+} = F_{\ll C} + F_{\sim C} + F_{\gg C}
\end{equation*}
To more easily track the $C$ dependence of derivatives of $F_C$, we take the convention that
\begin{equation*}
    F_C^{(a)}(x) = F^{(a)}(x/C)
\end{equation*}
(so $|F_C^{(a)}(x)|$ is bounded independent of $C$), and similarly for $F_{\leq C}$, $F_{\geq C}$, and so on.  In particular, this means that
\begin{equation*}
    \partial_x^{n} F_C(x) = C^{-n} F^{(n)}(x/C) = C^{-n} F^{(n)}_C(x)
\end{equation*}

The Fourier transform of a function $f$ is given by
\begin{equation*}
    \cF f(\xi) = \hat{f}(\xi) = \frac{1}{\sqrt{2\pi}} \int f(x) e^{-ix\xi}\;dx
\end{equation*}
As is well-known, the Fourier transform is unitary on $L^2$, with inverse
\begin{equation*}
    \cF^{-1} g(x) = \check{g}(x) = \frac{1}{\sqrt{2\pi}} \int g(\xi) e^{ix\xi}\;d\xi
\end{equation*}
Given a function $m: \bbR \to \bbR$, we define the Fourier multiplier $m(D)$ to be
\begin{equation*}
    m(D) f = \cF^{-1} m(\xi) \cF f
\end{equation*}
In particular,
\begin{equation*}
    \cF \left[m(D) f\right](\xi) = m(\xi) \hat{f}(\xi)
\end{equation*}
For our purposes, we will make frequent use of the dyadic Littlewood-Paley projectors
\begin{equation*}\begin{split}
        P_k =& F_{2^k}(|D|)\\
        P_k^\pm = F_{2^k}(\pm D)
\end{split}
\end{equation*}
and the variants $P_{\leq C}$, $P_{\geq C}$, etc.  In particular, we have that
\begin{equation*}
    \sum_{k=-\infty}^\infty P_k f = f
\end{equation*}

\subsection{Basic estimates}
\begin{lemma}\label{lem:space-freq-loc-decay}
    For $\delta < \min(1/2, 1-\alpha, 1-\beta)$ and for any positive $M$,
    \begin{equation}
        \lVert F_{\leq t^\alpha}(|x|) F_{\geq t^{-\delta}}(|D|) e^{it\Delta} F_{\leq t^\beta}(|x|) \rVert_{L^2 \to L^2} \lesssim_M t^{-M}
    \end{equation}
\end{lemma}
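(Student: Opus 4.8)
The plan is to exploit the non-stationary phase / almost-orthogonality of the phase-space regions picked out by the three cutoffs. Writing the composition with its kernel, the key point is that $e^{it\Delta}$ sends data frequency-localized to $|\xi| \gtrsim t^{-\delta}$ and spatially localized to $|x| \lesssim t^\beta$ into a function supported (up to rapidly decaying tails) in $|x| \sim |\xi| \cdot 2t$, i.e.\ in $|x| \gtrsim t^{1-\delta}$. Since $1 - \delta > \alpha$ by hypothesis, this is disjoint from the region $|x| \lesssim t^\alpha$ cut out by the leftmost factor, and we should gain an arbitrary power of $t$. Concretely, I would expand $F_{\geq t^{-\delta}}(|D|)$ and $F_{\leq t^\beta}(|x|)$ (the latter via its Fourier transform) and write the operator's Schwartz kernel as an oscillatory integral
\begin{equation*}
    K(x,y) = \frac{1}{2\pi}\int F_{\leq t^\alpha}(|x|)\, F_{\geq t^{-\delta}}(|\xi|)\, e^{i(x-y)\xi}\, e^{-it\xi^2}\, F_{\leq t^\beta}(|y|)\; d\xi,
\end{equation*}
and integrate by parts in $\xi$ using the operator $L = \frac{1}{i\phi'}\partial_\xi$ with phase $\phi = (x-y)\xi - t\xi^2$, $\phi' = (x-y) - 2t\xi$.

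The main step is the lower bound on $|\phi'|$ on the support of the integrand. On that support $|x| \lesssim t^\alpha$, $|y| \lesssim t^\beta$, and $|\xi| \gtrsim t^{-\delta}$, so $|2t\xi| \gtrsim t^{1-\delta}$, which dominates both $|x| \lesssim t^\alpha$ and $|y| \lesssim t^\beta$ once $t$ is large, since $\delta < 1 - \alpha$ and $\delta < 1 - \beta$. Hence $|\phi'| \gtrsim t^{1-\delta}$ throughout. Each integration by parts then produces a factor $\lesssim (t|\xi|)^{-1}$ from the $\partial_\xi$ hitting $1/\phi'$ or the $\xi$-derivative landing on $F_{\geq t^{-\delta}}(|\xi|)$ (which costs $t^\delta$ but is compensated since $|\xi|^{-1} \lesssim t^\delta$ there as well), together with the decay from $\phi'^{-1}$; after $N$ steps one gets $|K(x,y)| \lesssim_N t^{-N(1-\delta)}\langle\cdots\rangle$ on the relevant compact-in-scaled-variables region, with the remaining $\xi$-integral over $|\xi| \sim$ (dyadic) summable. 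One then bounds the $L^2 \to L^2$ norm by Schur's test: both $\int |K(x,y)|\,dx$ and $\int |K(x,y)|\,dy$ are controlled because $x$ ranges over $|x| \lesssim t^\alpha$ and $y$ over $|y| \lesssim t^\beta$, contributing at most polynomial factors $t^{O(1)}$ that are absorbed by choosing $N$ large.

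I expect the main obstacle to be bookkeeping the frequency localization cleanly: near $|\xi| \sim t^{-\delta}$ the lower bound $|\phi'| \gtrsim t^{1-\delta}$ is only barely better than the competing spatial scales, and each $\xi$-derivative of $F_{\geq t^{-\delta}}(|\xi|)$ costs $t^\delta$, so one must check that the net gain per integration by parts is a genuine positive power of $t$ — this is exactly where the strict inequalities $\delta < \min(1/2, 1-\alpha, 1-\beta)$ are used (the $1/2$ presumably gives room in a dyadic sum over $|\xi| \gtrsim t^{-\delta}$, ensuring the gain $t^{-(1-\delta)}$ beats the number $\sim \log t$ of dyadic pieces and any $|\xi|^{-1} \lesssim t^{\delta}$ losses, with $2\delta < 1$). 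A cleaner alternative, which I would fall back on if the direct kernel estimate gets messy, is to dyadically decompose $F_{\geq t^{-\delta}}(|D|) = \sum_{2^k \gtrsim t^{-\delta}} P_k$, use the standard stationary-phase / Mikhlin-type bound that $F_{\leq t^\alpha}(|x|) P_k e^{it\Delta} F_{\leq t^\beta}(|x|)$ has operator norm $\lesssim_M (2^k t)^{-M}$ whenever $2^k t \gg \max(t^\alpha, t^\beta)$, and then sum the geometric series in $k$; the worst term is $2^k \sim t^{-\delta}$, giving $\lesssim_M t^{-M(1-\delta)}$, which is $\lesssim_M t^{-M}$ after renaming $M$.
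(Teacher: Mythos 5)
Your approach is essentially the paper's: rewrite the composition as an oscillatory-integral kernel, observe that $|\phi'(\xi)| = |x-y-2t\xi| \gtrsim t^{1-\delta}$ on the support of the integrand (using $\delta < \min(1-\alpha, 1-\beta)$), integrate by parts repeatedly in $\xi$, and then pass to an operator-norm bound; the only cosmetic difference is that the paper takes the Hilbert--Schmidt norm $\lVert K_t\rVert_{L^2_{x,y}}$ where you invoke Schur's test, and both produce the same harmless polynomial factors $t^{O(\alpha)+O(\beta)}$. The one thing to tighten, which you correctly suspect but do not quite carry through, is the gain per integration by parts: each step produces $1/\phi' \lesssim t^{-(1-\delta)}$ and forces a $\xi$-derivative to land either on $F_{\geq t^{-\delta}}(|\xi|)$ (costing $t^\delta$) or on $1/\phi'$ (producing $\phi''/(\phi')^2 \lesssim t\cdot t^{-2(1-\delta)}$), so the net gain is $t^{-(1-2\delta)}$ per step, not $t^{-(1-\delta)}$; this is precisely where $\delta<1/2$ enters, to make $1-2\delta>0$. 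The same correction applies to your dyadic fallback, where the per-step gain is $(2^{2k}t)^{-1}$ rather than $(2^k t)^{-1}$, and again $\delta < 1/2$ guarantees $2^{2k}t \gtrsim t^{1-2\delta}\to\infty$.
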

\begin{proof}
    We first observe that it suffices to prove the bound for $t$ large, since for small $t$ the bound follows immediately from the trivial bound
    \begin{equation*}
        \lVert F_{\leq t^\alpha}(|x|) F_{\geq t^{-\delta}}(|D|) e^{it\Delta} F_{\leq t^\beta}(|x|) \rVert_{L^2 \to L^2} \lesssim 1
    \end{equation*}
    The main idea in this argument (which will be used frequently in this work) is to take advantage of nonstationary phase on the frequency side.  To do this, we rewrite the operator in terms of integration against a kernel:
    \begin{equation*}
        F_{\leq t^\alpha}(|x|) F_{\geq t^{-\delta}}(|D|) e^{it\Delta} F_{\leq t^\beta}(|x|) \phi = \int K_t(x,y) \phi(y)\;dy
    \end{equation*}
    where
    \begin{equation*}
        K_t(x,y) = \frac{F_{\leq t^\alpha}(|x|) F_{\leq t^\beta}(|y|)}{2\pi} \int e^{-it\xi^2} e^{i\xi(x-y)} F_{\geq t^{-\delta}}(|\xi|)\;d\xi
    \end{equation*}
    The integral has phase $\phi(\xi) = -t\xi^2 + \xi(x-y)$.  Under the assumption that ${\delta < \min(1-\alpha, 1-\beta)}$, we see that for large enough $t$,
    \begin{equation*}
        |\phi'(\xi)| = |x-y -2t\xi| \sim |t|^{1-\delta}
    \end{equation*}
    In particular, by repeatedly integrating by parts using the identity
    \begin{equation*}
        e^{-it\xi^2 + i\xi(x-y)} = \frac{i}{2t\xi - (x-y)} \partial_\xi e^{-it\xi^2 + i\xi(x-y)}
    \end{equation*}
    we see that
    \begin{equation*}
        K_t(x,y) = \sum_{a+b=N} C_{a,b} \frac{F_{\leq t^\alpha}(|x|) F_{\leq t^\beta}(|y|)}{2\pi} \int \frac{t^a e^{-it\xi^2} e^{i\xi(x-y)}}{(2t\xi - (x-y))^{N+a}} t^{b\delta} F^{(b)}_{> t^{-\delta}}(|\xi|)\;d\xi
    \end{equation*}
    In particular, we calculate that
    \begin{equation*}\begin{split}
        \lVert K_t \rVert_{L^2_{x,y}} \lesssim& \lVert F_{\leq t^\alpha}(|x|) \rVert_{L^2_x} \lVert F_{\leq t^\beta}(|y|) \rVert_{L^2_y} \sum_{a+b=N} \int \frac{|t|^a}{|2t\xi|^{N+a}} |t|^{b\delta} |F^{(b)}_{> t^{-\delta}}(|\xi|)|\;d\xi\\
        \lesssim_N& t^{(\alpha + \beta)/2} t^{N(1-2\delta)}
    \end{split}\end{equation*}
    Choosing $N$ sufficiently large depending on $M,\delta, \alpha$ and $\beta$ and recalling that 
    \begin{equation*}
        \lVert F_{\leq t^\alpha}(|x|) F_{\geq t^{-\delta}}(|D|) e^{it\Delta} F_{\leq t^\beta}(|x|) \rVert_{L^2 \to L^2} = \lVert K_t(x,y) \rVert_{L^2_{x,y}}
    \end{equation*}
    now gives the result.
\end{proof}

\begin{cor}\label{cor:loc-decay-weighted}
    For any $\alpha$, $\beta,$ and $\delta < \min(1/2, 1-\alpha, 1-\beta)$, if $|V(x)| \lesssim \jBra{x}^{-\sigma}$, then
    \begin{equation}\label{eqn:loc-decay-weighted}
        \lVert F(|x| < t^\alpha) F(|D| > t^{-\delta}) e^{it\Delta} V(x) \rVert_{L^2 \to L^2} \lesssim \jBra{t}^{-\beta\sigma}
    \end{equation}
\end{cor}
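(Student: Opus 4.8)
The plan is to reduce \Cref{cor:loc-decay-weighted} to \Cref{lem:space-freq-loc-decay} by splitting the potential according to whether $|x|$ is comparable to $t^\beta$ or much larger. Using $F_{<t^\beta}+F_{\geq t^\beta}=\bbOne_{\bbR^+}$, I would write $V=V F_{<t^\beta}(|x|)+V F_{\geq t^\beta}(|x|)$ and estimate the two pieces separately. Throughout I will use that $e^{it\Delta}$ is unitary on $L^2$ and that the smooth Fourier and spatial cut-offs have operator norm at most $1$, and that multiplication by an $L^\infty$ function has operator norm equal to its $L^\infty$ norm. In particular, for $t$ in any bounded set the claimed bound is immediate from $\lVert V\rVert_{L^\infty}\lesssim 1$, so it suffices to treat $t$ large; I may also assume $\beta>0$, since otherwise $\jBra{t}^{-\beta\sigma}\gtrsim 1$ and the bound is again trivial.

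For the far piece, $V F_{\geq t^\beta}(|x|)$ is multiplication by a function supported in $|x|\geq t^\beta/2$, so $\lVert V F_{\geq t^\beta}(|x|)\rVert_{L^\infty}\lesssim \jBra{t^\beta/2}^{-\sigma}\lesssim \jBra{t}^{-\beta\sigma}$, where the last step uses $\jBra{t^\beta/2}\gtrsim \jBra{t}^{\beta}$ for $t$ large; bounding $F_{\leq t^\alpha}(|x|)$, $F_{\geq t^{-\delta}}(|D|)$ and $e^{it\Delta}$ each by $1$ then gives the desired bound for this term. For the near piece, given $\phi\in L^2$ the function $g:=V F_{<t^\beta}(|x|)\phi$ is supported in $|x|\leq t^\beta$ with $\lVert g\rVert_{L^2}\lesssim\lVert\phi\rVert_{L^2}$; since $F_{\leq t^\beta}(|x|)$ is identically $1$ on $|x|\leq t^\beta$ we have $g=F_{\leq t^\beta}(|x|)g$, hence
\begin{equation*}
    F_{\leq t^\alpha}(|x|)F_{\geq t^{-\delta}}(|D|)e^{it\Delta}V F_{<t^\beta}(|x|)\phi = F_{\leq t^\alpha}(|x|)F_{\geq t^{-\delta}}(|D|)e^{it\Delta}F_{\leq t^\beta}(|x|)g.
\end{equation*}
Since $\delta<\min(1/2,1-\alpha,1-\beta)$, \Cref{lem:space-freq-loc-decay} bounds the right-hand side by $\lesssim_M t^{-M}\lVert g\rVert_{L^2}\lesssim_M t^{-M}\lVert\phi\rVert_{L^2}$ for every $M$; choosing $M\geq\beta\sigma$ gives $\lesssim t^{-\beta\sigma}\lesssim\jBra{t}^{-\beta\sigma}$ for $t$ large.

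Adding the two contributions gives \eqref{eqn:loc-decay-weighted}. I do not anticipate any real obstacle here: the argument is essentially a two-scale decomposition plus the trivial bound for the non-decaying part. The only point requiring a bit of care is the bookkeeping with the dyadic cut-offs — checking that $F_{<t^\beta}$ and $F_{\geq t^\beta}$ genuinely partition $\bbR^+$, and that the support of $V F_{<t^\beta}(|x|)\phi$ lies inside the region where the spatial cut-off of \Cref{lem:space-freq-loc-decay} is identically $1$ — so that the lemma can be quoted with its cut-off in exactly the stated form and with no loss in the exponent $\beta$.
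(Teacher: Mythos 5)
Your proposal is correct and follows essentially the same approach as the paper: split $V$ into a near piece supported in $|x|\lesssim t^\beta$, which decays arbitrarily fast by \Cref{lem:space-freq-loc-decay}, and a far piece whose $L^\infty$ norm is $\lesssim t^{-\beta\sigma}$. The only cosmetic difference is that you use the exact partition $F_{<t^\beta}+F_{\geq t^\beta}=\bbOne_{\bbR^+}$ (and then absorb the near piece under $F_{\leq t^\beta}$), while the paper writes $V=F_{\geq t^\beta}V+F_{\leq t^\beta}V$ directly; both work.
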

\begin{proof}
    The bound is trivial for $|t| \leq 1$, so we will focus on the case $|t| > 1$.  Let us write $V = F_{\geq t^\beta}(x) V + F_{\leq t^\beta}(x) V$.  The operator with $V$ restricted to $|x| < t^\beta$ decays arbitrarily fast in time by~\Cref{lem:space-freq-loc-decay}, while the decay of $V$ gives the bound
    \begin{equation*}
        |F_{\geq t^\beta}(x) V| \lesssim t^{-\beta\sigma}
    \end{equation*}
    which is enough to prove~\eqref{eqn:loc-decay-weighted}.
\end{proof}
We also have the following refinement:
\begin{lemma}\label{lem:proj-fast-decay-2}
    Let $j \geq 0$ and $k >  -j - 10$.  For $t > 2^{j-k}$, we have that
    \begin{equation}\label{eqn:proj-fast-decay-2}
        \lVert \bbOne_{|x| < t 2^{k-20}} \partial_x^m P_k e^{-it\Delta} F_{ \lesssim 2^j}(x) \rVert_{L^2 \to L^2} \lesssim_N \min\left( 2^{mk}, 2^{j/2}\frac{2^{(m+3/2-2N)k)}}{t^{N-1/2}}\right)
    \end{equation}
\end{lemma}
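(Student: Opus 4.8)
Since the right side is a minimum of two quantities, I would prove each separately. The first bound $2^{mk}$ is immediate: $\partial_x^m P_k$ has symbol $(i\xi)^m F(|\xi|/2^k)$, which is supported in $|\xi|\sim 2^k$ and bounded there by $\lesssim 2^{mk}$, so Plancherel gives $\lVert \partial_x^m P_k\rVert_{L^2\to L^2}\lesssim 2^{mk}$, while $\bbOne_{|x|<t2^{k-20}}$, $e^{-it\Delta}$ and $F_{\lesssim 2^j}(x)$ are all contractions on $L^2$. For the decaying bound I would argue as in the proof of \Cref{lem:space-freq-loc-decay}, exploiting nonstationary phase on the frequency side but now tracking the dyadic parameters $2^k$, $2^j$ and $t$ in place of powers of $t$.

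Concretely, I would write the operator as integration against the kernel
\begin{equation*}
    K_t(x,y) = \frac{\bbOne_{|x|<t2^{k-20}}\, F_{\lesssim 2^j}(y)}{2\pi}\int (i\xi)^m F\!\left(\frac{|\xi|}{2^k}\right) e^{it\xi^2 + i\xi(x-y)}\;d\xi ,
\end{equation*}
whose phase $\varphi(\xi) = t\xi^2 + \xi(x-y)$ satisfies $\varphi'(\xi) = 2t\xi + (x-y)$. On the support of the amplitude $|\xi|\sim 2^k$, so $|2t\xi|\sim t2^k$, whereas $|x-y|\le |x|+|y| \lesssim t2^{k-20} + 2^j$; the hypothesis $t > 2^{j-k}$ together with the margin $2^{-20}$ in the spatial cutoff is exactly what forces $|x-y|\ll t2^k$ on $\operatorname{supp}K_t$, so that $|\varphi'(\xi)|\gtrsim t2^k$ there. (This is the dyadic analogue of the conditions $\alpha,\beta < 1-\delta$ in \Cref{lem:space-freq-loc-decay}; morally it says that a frequency-$2^k$ wave packet starting in $|x|\lesssim 2^j$ has, after time $t$, group-translated a distance $\sim t2^k$ and so escaped the region $|x|<t2^{k-20}$.)

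Given this, I would integrate by parts $N$ times using $e^{i\varphi(\xi)} = \tfrac{1}{i\varphi'(\xi)}\partial_\xi e^{i\varphi(\xi)}$; to keep the $2^k$- and $t$-dependence honest it is cleanest to first rescale $\xi = 2^k\eta$, which turns the amplitude into $2^{(m+1)k}(i\eta)^m F(|\eta|)$ (supported in $|\eta|\sim1$) and the phase into $\psi(\eta) = (t2^{2k})\eta^2 + 2^k(x-y)\eta$, with $|\psi'|\gtrsim t2^{2k}$ and $\psi'' = 2t2^{2k}$. Each integration by parts then contributes a factor $(t2^{2k})^{-1}$, while the derivatives landing on $F(|\eta|)$ or on $1/\psi'$ cost only $O(1)$ after rescaling (note $\psi''/(\psi')^2\lesssim (t2^{2k})^{-1}\sim 1/\psi'$, and $\psi''' = 0$). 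After $N$ steps the $\eta$-integrand is $\lesssim_N (t2^{2k})^{-N}$ on $|\eta|\sim1$, so
\begin{equation*}
    |K_t(x,y)| \lesssim_N \bbOne_{|x|<t2^{k-20}}\, F_{\lesssim 2^j}(y)\, 2^{(m+1-2N)k}\, t^{-N} .
\end{equation*}
Bounding the operator norm by the Hilbert--Schmidt norm of $K_t$ and using $\lVert \bbOne_{|x|<t2^{k-20}}\rVert_{L^2_x}\lesssim (t2^k)^{1/2}$ and $\lVert F_{\lesssim 2^j}\rVert_{L^2_y}\lesssim 2^{j/2}$ gives
\begin{equation*}
    \lVert \bbOne_{|x|<t2^{k-20}} \partial_x^m P_k e^{-it\Delta} F_{\lesssim 2^j}(x)\rVert_{L^2\to L^2} \lesssim_N (t2^k)^{1/2}\, 2^{j/2}\, 2^{(m+1-2N)k}\, t^{-N} = 2^{j/2}\,\frac{2^{(m+3/2-2N)k}}{t^{\,N-1/2}} ,
\end{equation*}
which, combined with the trivial bound, is the claim.

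The one step I expect to be the main obstacle is the lower bound $|\varphi'(\xi)|\gtrsim t2^k$: one must verify carefully that on the \emph{full} support of the amplitude and of both spatial cutoffs the term $2t\xi$ cannot be cancelled by $x-y$ — equivalently, that the wave packet has genuinely left the region $|x|<t2^{k-20}$ — and this is precisely where the separation hypothesis $t>2^{j-k}$ and the numerical slack ($2^{-20}$ in the spatial cutoff, $k>-j-10$) are consumed. Once that is in place, everything else is the same nonstationary-phase bookkeeping as in \Cref{lem:space-freq-loc-decay}, made routine by the rescaling $\xi = 2^k\eta$.
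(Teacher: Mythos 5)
Your proof is correct and follows essentially the same route as the paper's: the trivial bound $2^{mk}$ from Plancherel on the multiplier $(i\xi)^m F_{2^k}(|\xi|)$, and for the decaying bound, writing the operator as a kernel integral and exploiting nonstationary phase ($|\varphi'(\xi)| = |2t\xi + (x-y)| \gtrsim t2^k$ on the support, guaranteed by $t>2^{j-k}$ plus the $2^{-20}$ margin), then estimating the Hilbert--Schmidt norm. The rescaling $\xi = 2^k\eta$ you introduce is a cosmetic change of bookkeeping rather than a different argument --- the paper tracks the powers of $2^k$ directly in the integration-by-parts sum, as in \Cref{lem:space-freq-loc-decay} --- and both yield identically $2^{j/2}\,2^{(m+3/2-2N)k}/t^{N-1/2}$.
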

\begin{proof}
    The bound of $2^{mk}$ follows immediately from the frequency localization, so we will focus on proving the second bound.  We observe that the integral kernel associated with $\bbOne_{|x| < t 2^{k-20}} \partial_x^m P_k e^{-it\Delta} F_{ \lesssim 2^j}(x)$ is
    \begin{equation*}
        K(x, y) = \frac{\bbOne_{|x| < t 2^{k-20}} F_{\lesssim 2^j}(|y|)}{2\pi} \int \xi^m  F_{2^k}(|\xi|) e^{i(t \xi^2 + (x-y)\xi)}\;d\xi
    \end{equation*}
    For $|x| < t 2^{k-20}$ and $|y| \lesssim 2^j$, the phase for the exponential in the integral is bounded away from zero, with magnitude
    $$|2t\xi + (x-y)| \sim t 2^k$$
    Repeated integration by parts now gives
    \begin{equation*}\begin{split}
        \left|\int \xi^m F_{2^k}(|\xi|) e^{i(t \xi^2 + (x-y)\xi)}\;d\xi\right| \leq& \sum_{a + b + c = N} C_{a,b} \int 2^{-ak} \xi^{m-c} \frac{|F_{2^k}^{(a)}(|\xi|)|}{t^N \xi^{N+b}}\;d\xi \\
        \lesssim_N& \frac{2^{(m+1 - 2N)k}}{t^N}
    \end{split}\end{equation*}
    Thus, we find that
    \begin{equation*}
        \lVert K(x,y) \rVert_{L^2_{x,y}} \lesssim_N 2^{j/2} \frac{2^{(m + 3/2 - 2N)k}}{t^{N+1/2}}
    \end{equation*}
    which implies~\eqref{eqn:proj-fast-decay-2}.
\end{proof}

\begin{lemma}\label{lem:half-freq-proj-supp-lem}
    Suppose $f(x)$ and $g(x)$ are bounded functions whose supports are separated by a distance $2^j$.  Then, for any $N \geq 0$,
    $$\lVert f(x) P_{\geq k}^\pm g(x) \rVert_{L^2 \to L^2} \lesssim_N 2^{-N(k+j)}$$
\end{lemma}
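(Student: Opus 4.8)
The plan is to estimate the integral kernel of the operator $f(x)\,P_{\geq k}^\pm\,g(x)$ directly, using nonstationary phase on the frequency side in the manner of \Cref{lem:space-freq-loc-decay,lem:proj-fast-decay-2}, and then to pass from the kernel to the operator norm by Schur's test. Concretely, I would write
\begin{equation*}
    \big(f\,P_{\geq k}^\pm\,g\big)\phi(x) = \int K(x,y)\,\phi(y)\;dy,
    \qquad
    K(x,y) = \frac{f(x)g(y)}{2\pi}\int e^{i(x-y)\xi}\,F_{\geq 2^k}(\pm\xi)\;d\xi .
\end{equation*}
On the support of $K$ we have $|x-y| \geq 2^j > 0$, so the phase $(x-y)\xi$ is nonstationary in $\xi$, and integrating by parts in $\xi$ repeatedly should produce arbitrary polynomial decay in $|x-y|$, with each integration by parts also turning a derivative landing on the symbol $F_{\geq 2^k}$ into a gain of $2^{-k}$.

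The one genuine wrinkle --- and the only place requiring care --- is that the symbol $F_{\geq 2^k}(\pm\xi)$ is \emph{not} compactly supported: it equals $1$ for $\pm\xi$ large, so one cannot integrate by parts in $\xi$ naively. I would fix this by splitting dyadically, $P_{\geq k}^\pm = \sum_{l\geq k}P_l^\pm$ with $P_l^\pm = F_{2^l}(\pm D)$, so that the symbol of each $P_l^\pm$ is supported on a frequency annulus $\{\pm\xi\sim 2^l\}$ of measure $\sim 2^l$ and has $p$-th derivative of size $O(2^{-pl})$. For a single piece there are no boundary terms, and integrating by parts $N+1$ times in the corresponding kernel $K_l$ gives, on its support,
\begin{equation*}
    |K_l(x,y)| \lesssim_N \lVert f\rVert_{L^\infty}\lVert g\rVert_{L^\infty}\,\frac{2^l\cdot 2^{-(N+1)l}}{|x-y|^{N+1}} = \lVert f\rVert_{L^\infty}\lVert g\rVert_{L^\infty}\,\frac{2^{-Nl}}{|x-y|^{N+1}},
\end{equation*}
while $K_l\equiv 0$ off the support. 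Integrating this bound over $|x-y|\geq 2^j$ (which converges since the exponent $N+1$ exceeds $1$ for $N\geq 1$) controls both Schur integrals by $\lesssim_N \lVert f\rVert_{L^\infty}\lVert g\rVert_{L^\infty}\,2^{-N(l+j)}$, so Schur's test yields $\lVert f\,P_l^\pm\,g\rVert_{L^2\to L^2}\lesssim_N \lVert f\rVert_{L^\infty}\lVert g\rVert_{L^\infty}\,2^{-N(l+j)}$. Summing the geometric series over $l\geq k$ then gives $\lVert f\,P_{\geq k}^\pm\,g\rVert_{L^2\to L^2}\lesssim_N \lVert f\rVert_{L^\infty}\lVert g\rVert_{L^\infty}\,2^{-N(k+j)}$, which is the claim; the case $N=0$ is the trivial bound $\lVert f\rVert_{L^\infty}\lVert g\rVert_{L^\infty}$, as are the degenerate cases $k+j\leq 0$ (where $2^{-N(k+j)}\geq 1$).

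I do not expect any serious obstacle: apart from the dyadic-decomposition fix for the non-compact frequency support, this is a routine kernel computation of exactly the type already performed above. The only real bookkeeping is the accounting of powers of $2^l$ --- one positive power from the measure of the frequency annulus against $N+1$ negative powers from the symbol derivatives --- together with the observation that the resulting sum over $l$ converges all the way down to $l=k$, which is precisely why one takes $N\geq 1$ (the case $N=0$ being trivial anyway).
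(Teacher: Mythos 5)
Your proof is correct and takes essentially the same route as the paper's: dyadic decomposition $P_{\geq k}^\pm = \sum_{\ell \geq k} P_\ell^\pm$, a kernel bound for each annulus obtained by integrating by parts in $\xi$ against the nonstationary phase $(x-y)\xi$, and then summation of the resulting geometric series in $\ell$. The one small difference is that you pass from the kernel bound to the operator bound via Schur's test, while the paper uses the Hilbert–Schmidt norm $\lVert K_\ell \rVert_{L^2_{x,y}}$; your choice is in fact the more robust one, since the Hilbert–Schmidt norm is not finite for merely bounded $f,g$ with separated (but unbounded) supports, and you may also note that the paper's intermediate exponent $2^{(N-1)(j-\ell)}$ contains a sign typo (it should read $2^{-(N-1)(j+\ell)}$, matching your $2^{-N(\ell+j)}$).
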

\begin{proof}
    To avoid cumbersome notation, we will prove the result for the projection to positive frequencies.  We have that
    \begin{equation*}
         P^+_{\geq k} = \sum_{\ell = k}^\infty F_{\ell}^+
    \end{equation*}
    Moreover, the operator $ f(x) P^+_{\ell} g(x)$ can be represented as integration again the kernel
    \begin{equation*}
        K_\ell(x,y) = \frac{f(x)g(y)}{2\pi} \int e^{i\xi (x-y)} F_{2^\ell}(\xi) \;d\xi
    \end{equation*}
    Repeated integration by parts shows that
    \begin{equation*}\begin{split}
        |K_\ell(x,y)|  \lesssim& \left|\frac{f(x)g(y)}{2\pi(x-y)^N2^{N\ell}} \int e^{i\xi (x-y)} F^{(N)}_{2^\ell}(\xi) \right|\\
                    \lesssim& \frac{f(x)g(y)}{2\pi(x-y)^N} 2^{(1-N)\ell}
    \end{split}\end{equation*}
    Thus, noting that $K_\ell(x,y) = 0$ unless $|x-y| > 2^j$, we see that for $N \geq 2$,
    \begin{equation*}
        \lVert K_\ell(x,y) \rVert_{L^2_{x,y}} \lesssim \lVert f \rVert_{L^\infty}\lVert g \rVert_{L^\infty} 2^{(N-1)(j-\ell)}
    \end{equation*}
    which implies that
    \begin{equation*}
        \lVert f(x) P^+_\ell g(x) \rVert_{L^2\to L^2} \lesssim \lVert f \rVert_{L^\infty}\lVert g \rVert_{L^\infty} 2^{(N-1)(j-\ell)}
    \end{equation*}
    Summing over $n \geq k$ now gives the result.
\end{proof}


We will also need the following formula, which allows us to symmetrize certain sums involving operators:
\begin{lemma}
    Given operators $A,B,$ and $C$ with $[A,C] = 0$, we have that
    \begin{equation}\label{eqn:symm-formula}
        A^2BC^2 + C^2 B A^2 = 2 (AC)B(AC) + R(A,B,C)
    \end{equation}
    where
    \begin{equation*}
        R(A,B,C) = A[[A,B],C]C + C[[C,B],A]A + A[C,[C,B]]A + C[A,[A,B]]C
    \end{equation*}
\end{lemma}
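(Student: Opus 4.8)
The identity is purely formal, so the plan is to verify it by rearranging products of $A$, $B$, $C$, invoking $[A,C]=0$ at a few key points. The first observation is that, since $A$ and $C$ commute, $(AC)B(AC) = ACBAC = CABCA$, so it suffices to show that the corrections incurred when replacing $A^2BC^2$ by $ACBAC$ and $C^2BA^2$ by $CABCA$ add up to $R(A,B,C)$. The only tool I will use is the elementary rearrangement identity: if $[X,Z]=0$, then
\begin{equation*}
    XYZ - ZYX \;=\; X[Y,Z] + Z[X,Y] \;=\; [X,Y]Z + [Y,Z]X ,
\end{equation*}
each form following by inserting $\pm XZY$ (resp.\ $\pm ZXY$) and discarding the term $[X,Z]Y = 0$.

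Applying the first form with $(X,Y,Z)=(A,B,C)$, after factoring an $A$ off the left and a $C$ off the right, gives
\begin{equation*}
    A^2BC^2 - ACBAC \;=\; A\bigl(ABC - CBA\bigr)C \;=\; A^2[B,C]C + AC[A,B]C ,
\end{equation*}
and, exchanging $A\leftrightarrow C$ and using $CABCA = ACBAC$,
\begin{equation*}
    C^2BA^2 - ACBAC \;=\; C^2[B,A]A + CA[C,B]A .
\end{equation*}
Adding these and writing $[B,C]=-[C,B]$, $[B,A]=-[A,B]$ yields
\begin{equation*}
    A^2BC^2 + C^2BA^2 - 2(AC)B(AC) \;=\; \bigl(AC[A,B]C - C^2[A,B]A\bigr) + \bigl(CA[C,B]A - A^2[C,B]C\bigr) .
\end{equation*}

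It then remains to recognize the right-hand side as $R(A,B,C)$. In the first bracket I will use $AC=CA$ to pull a $C$ off the left and then apply the rearrangement identity (second form) with $(X,Y,Z)=(A,[A,B],C)$:
\begin{equation*}
    AC[A,B]C - C^2[A,B]A \;=\; C\bigl(A[A,B]C - C[A,B]A\bigr) \;=\; C\bigl([A,[A,B]]C + [[A,B],C]A\bigr) ,
\end{equation*}
and then the Jacobi identity — which here collapses to $[[A,B],C]=[[C,B],A]$ since $[[C,A],B]=0$ — turns this into $C[A,[A,B]]C + C[[C,B],A]A$, the two $C$-leading terms of $R$. The second bracket is treated identically after the substitution $A\leftrightarrow C$, producing $A[C,[C,B]]A + A[[A,B],C]C$, the two $A$-leading terms of $R$; summing the two gives exactly $R(A,B,C)$.

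I expect no genuine obstacle: the argument is mechanical. The only points demanding care are the sign and operator-order bookkeeping in the repeated use of the rearrangement identity, and the observation that the sole nontrivial input — the Jacobi identity — simplifies precisely because $[A,C]=0$ (this is also what makes the final expression symmetric under $A\leftrightarrow C$, as it must be). As a sanity check one may evaluate both sides on $2\times2$ matrices with $A$, $C$ diagonal and $B$ off-diagonal.
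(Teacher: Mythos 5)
Your proof is correct, and I verified every step (the two forms of the rearrangement identity under $[X,Z]=0$, the factoring $A(\cdot)C$ and $C(\cdot)A$, and the Jacobi collapse $[[A,B],C]=[[C,B],A]$). The paper states the lemma without any proof, evidently treating it as an elementary algebraic identity to be checked directly; your derivation via the rearrangement identity $XYZ-ZYX=[X,Y]Z+[Y,Z]X$ plus Jacobi is a clean and systematic way to produce exactly the form of $R(A,B,C)$ asserted, and is arguably more illuminating than a brute-force expansion.
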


In order to deal with commutators involving space weights, the following elementary estimate will be helpful:
\begin{lemma}\label{lem:elem-ineq}
    For any $\alpha > 0$ and $\rho \in [0, \min(\alpha, 1)]$,
    $$||x|^\alpha - |y|^\alpha| \lesssim_{\alpha, \rho} (|x|+|y|)^{\alpha - \rho} |x-y|^\rho$$
\end{lemma}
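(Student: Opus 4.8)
The plan is to reduce the inequality to a one–variable estimate by exploiting homogeneity, and then to dispatch the remaining estimate by a short case analysis on whether $\alpha \geq 1$ or $\alpha < 1$. Write $b = \max(|x|,|y|)$ and $a = \min(|x|,|y|)$, so that $0 \leq a \leq b$ and $\bigl||x|^\alpha - |y|^\alpha\bigr| = b^\alpha - a^\alpha$. Since $|x-y| \geq \bigl||x|-|y|\bigr| = b-a \geq 0$ and $\rho \geq 0$, while $|x|+|y| = a+b$ and $\alpha - \rho \geq 0$, it suffices to prove
\begin{equation*}
    b^\alpha - a^\alpha \lesssim_{\alpha,\rho} (a+b)^{\alpha - \rho}(b-a)^\rho, \qquad 0 \leq a \leq b.
\end{equation*}

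Both sides are homogeneous of degree $\alpha$ under $(a,b) \mapsto (\lambda a, \lambda b)$, so we may normalize $b = 1$ and must show $1 - a^\alpha \lesssim_{\alpha,\rho} (1+a)^{\alpha-\rho}(1-a)^\rho$ for $a \in [0,1]$. Because $1 \leq 1+a \leq 2$, the factor $(1+a)^{\alpha-\rho}$ lies in $[1, 2^{\alpha - \rho}]$ and can be absorbed into the implied constant; and because $0 \leq 1 - a \leq 1$ and $\rho \leq \min(\alpha,1)$, we have $(1-a)^{\min(\alpha,1)} \leq (1-a)^\rho$. Thus everything reduces to the single inequality
\begin{equation*}
    1 - a^\alpha \leq C_\alpha\, (1-a)^{\min(\alpha,1)}, \qquad a \in [0,1].
\end{equation*}
If $\alpha \geq 1$, write $1 - a^\alpha = \int_a^1 \alpha t^{\alpha - 1}\,dt \leq \alpha (1-a)$, which is the claim with $\min(\alpha,1) = 1$. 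If $0 < \alpha < 1$, observe that $h(a) := a^\alpha + (1-a)^\alpha$ satisfies $h''(a) = \alpha(\alpha-1)\bigl[a^{\alpha-2} + (1-a)^{\alpha-2}\bigr] \leq 0$ on $(0,1)$, so $h$ is concave and therefore bounded below on $[0,1]$ by its chord through the endpoints $h(0) = h(1) = 1$; hence $a^\alpha + (1-a)^\alpha \geq 1$, i.e. $1 - a^\alpha \leq (1-a)^\alpha = (1-a)^{\min(\alpha,1)}$. Combining the two cases and unwinding the reductions gives the lemma.

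There is no genuine obstacle here; the only point requiring a little care is the case split at $\alpha = 1$ (one cannot simply use the mean value theorem uniformly, since for $\alpha < 1$ the derivative of $t^\alpha$ blows up at $t = 0$), which is handled by the concavity argument above. An alternative to the homogeneity normalization, should one prefer to avoid it, is to split directly into the regimes $b \leq 2a$ (where $a \sim b \sim a+b$ and the mean value theorem plus $(b-a)^{1-\rho} \leq (a+b)^{1-\rho}$ suffices) and $b > 2a$ (where $b - a \sim b \sim a+b$ and one just bounds $b^\alpha - a^\alpha \leq b^\alpha$); this yields the same constants.
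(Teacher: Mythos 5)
Your proof is correct, and it takes a genuinely different route from the paper. The paper's proof reduces by interpolation (in $\rho$, exploiting that $(|x|+|y|)^{\alpha-\rho}|x-y|^\rho$ is log-linear in $\rho$) to the three endpoint cases $\rho = 0$, $\rho = \alpha < 1$, $\rho = 1 \leq \alpha$, which are then handled by monotonicity, a clever integral comparison, and the mean value theorem respectively. You instead exploit homogeneity to normalize $\max(|x|,|y|) = 1$, absorb the harmless factor $(1+a)^{\alpha-\rho} \in [1, 2^{\alpha-\rho}]$, and observe that $(1-a)^\rho \geq (1-a)^{\min(\alpha,1)}$ for $a \in [0,1]$, so the whole lemma collapses to $1 - a^\alpha \leq C_\alpha(1-a)^{\min(\alpha,1)}$ — which you then dispatch by a case split at $\alpha = 1$ (a fundamental theorem of calculus bound when $\alpha \geq 1$, and concavity of $a \mapsto a^\alpha + (1-a)^\alpha$ when $\alpha < 1$). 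Both arguments are elementary and short; yours has the minor advantage of avoiding any appeal to interpolation, in exchange for the normalization step, and it makes the extremal endpoint role of $\rho = \min(\alpha,1)$ transparent. One small remark: when invoking homogeneity you implicitly need $b > 0$; the degenerate case $a = b = 0$ is trivially an equality $0 = 0$, so this is harmless but worth noting.
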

\begin{proof}
    By interpolation, it suffices to prove the cases $\rho = 0$, $\rho = \alpha < 1$, and $\rho = 1 \leq \alpha$.  The case $\rho = 0$ follows immediately from the fact that $x \to x^\alpha$ is increasing, and the case $\rho = 1 \leq \alpha$ follows from the mean value theorem.  Finally, for $\rho = \alpha < 1$, we note that if $|x| > |y|$, then
    \begin{equation*}\begin{split}
        |x|^\alpha - |y|^\alpha =& \alpha \int_0^{|x|-|y|} (z + |y|)^{\alpha - 1}\;dz\\
                                \leq& \alpha \int_0^{|x|-|y|} z^{\alpha - 1}\;dz\\
                                \leq& ||x|-|y||^\alpha
                                \leq |x-y|^\alpha\qedhere
    \end{split}\end{equation*}
\end{proof}

\begin{cor}\label{cor:elem-ineq}
    For any $\alpha > 0$ and $\rho \in [0, \min(\alpha, 1)]$,
    $$|\jBra{x}^\alpha - \jBra{y}^\alpha| \lesssim_{\alpha, \rho} (\jBra{x}+\jBra{y})^{\alpha - \rho} |x-y|^\rho$$
\end{cor}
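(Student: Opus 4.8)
The plan is to reduce directly to \Cref{lem:elem-ineq}. Since $\jBra{x} = \sqrt{1+x^2} \geq 1 > 0$ for every real $x$, the numbers $\jBra{x}$ and $\jBra{y}$ are positive, so we may apply \Cref{lem:elem-ineq} with the roles of $|x|$ and $|y|$ played by $\jBra{x}$ and $\jBra{y}$ respectively (the exponents $\alpha > 0$ and $\rho \in [0,\min(\alpha,1)]$ are exactly the hypotheses of the corollary, so nothing extra needs to be assumed). This gives
\begin{equation*}
    |\jBra{x}^\alpha - \jBra{y}^\alpha| \lesssim_{\alpha,\rho} (\jBra{x} + \jBra{y})^{\alpha - \rho} |\jBra{x} - \jBra{y}|^\rho.
\end{equation*}

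It then remains only to replace $|\jBra{x} - \jBra{y}|$ by $|x - y|$ on the right-hand side. For this I would use that $t \mapsto \jBra{t}$ is $1$-Lipschitz on $\bbR$: since $\frac{d}{dt}\jBra{t} = t/\sqrt{1+t^2}$ has absolute value at most $1$, the mean value theorem yields $|\jBra{x} - \jBra{y}| \leq |x - y|$. Because $\rho \geq 0$, raising both sides to the $\rho$-th power preserves the inequality, and substituting into the display above gives $|\jBra{x}^\alpha - \jBra{y}^\alpha| \lesssim_{\alpha,\rho} (\jBra{x} + \jBra{y})^{\alpha - \rho} |x-y|^\rho$, which is the claim.

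There is no genuine obstacle here; the only point requiring any attention is the observation that the weight $\jBra{\cdot}$ is $1$-Lipschitz, so that the physical-space difference controls the bracket difference, after which the corollary follows from \Cref{lem:elem-ineq} with no further case analysis or interpolation.
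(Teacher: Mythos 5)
Your argument is correct and takes a genuinely cleaner route than the paper. The paper re-runs the three endpoint cases $\rho = 0$, $\rho = 1 \leq \alpha$, and $\rho = \alpha < 1$ directly for the Japanese bracket, computing $\partial_x \jBra{x}^\alpha = \alpha \frac{x}{\jBra{x}} \jBra{x}^{\alpha-1}$ and estimating it by $\alpha \jBra{x}^{\alpha-1}$ to run a mean-value argument, and then implicitly relies on the interpolation step that was already spelled out in the proof of \Cref{lem:elem-ineq} to fill in intermediate $\rho$. You instead observe that since $\jBra{x}, \jBra{y} > 0$, \Cref{lem:elem-ineq} applies verbatim to the pair $(\jBra{x}, \jBra{y})$ and yields $|\jBra{x}^\alpha - \jBra{y}^\alpha| \lesssim (\jBra{x}+\jBra{y})^{\alpha-\rho}|\jBra{x}-\jBra{y}|^\rho$, after which the $1$-Lipschitz bound $|\jBra{x}-\jBra{y}| \leq |x-y|$ (monotone under the $\rho$-th power since $\rho \geq 0$) finishes the proof. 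What your approach buys is economy: you reuse \Cref{lem:elem-ineq} as a black box and avoid duplicating both the case analysis and the interpolation, at the price only of noticing that $\jBra{\cdot}$ is nonexpansive. Both proofs are sound; yours is the shorter and arguably the more natural reduction.
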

\begin{proof}
    The case $\rho = 0$ is immediate.  For $\rho \neq 0$, we observe that
    \begin{equation}\label{eqn:jBra-deriv}
        \partial_x \jBra{x}^\alpha = \alpha \frac{x}{\jBra{x}} \jBra{x}^{\alpha - 1}
    \end{equation}
    Since the right-hand side is less than $\alpha\jBra{x}^{\alpha-1}$, the mean value theorem now suffices to prove the result with $\rho = 1 \leq \alpha$.  Finally, for $\rho = \alpha < 1$, we compute that for $|x| > |y|$
    \begin{equation*}\begin{split}
        \jBra{x}^\alpha - \jBra{y}^\alpha =& \int_{|x|-|y|}^{0} \alpha \frac{z+|y|}{\jBra{z+|y|}} \jBra{z+|y|}^{\alpha - 1}\;dz\\
        \leq& \int_{|x|-|y|}^{0} \alpha |z+|y||^{\alpha - 1}\;dz\\
        \leq& |x-y|^\alpha \qedhere
    \end{split}\end{equation*}
\end{proof}

In particular, this lemma allows us to prove the following useful results:
\begin{cor}\label{cor:comm-cor}
    For $\alpha > 0$, $\rho \in [0, \min(1, \alpha)]$, we have that
    \begin{equation}\label{eqn:power-comm}
        \lVert F_{\lesssim 2^j}(|x|)| [|x|^\alpha, P_k] F_{2^j}(x) \rVert_{L^2 \to L^2} \lesssim 2^{(\alpha - \rho)j} 2^{-\rho k}
    \end{equation}
    and
    \begin{equation}\label{eqn:jBra-comm}
        \lVert F_{\leq 2^j}(x) [\jBra{x}^\alpha, P_k] F_{\leq 2^j}(x)\rVert_{L^2 \to L^2} \lesssim \jBra{2^{j}}^{\alpha - \rho} 2^{-\rho k}
    \end{equation}
\end{cor}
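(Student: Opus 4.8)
The plan is to realize each commutator as an integral operator and control its kernel by Schur's test. Recall that $P_k = F_{2^k}(|D|)$ acts by convolution with $\check m_k(x) = 2^k \psi(2^k x)$, where $\psi = \cF^{-1}[F(|\cdot|)]$ is a Schwartz function, being the inverse Fourier transform of a smooth compactly supported function. Hence $[|x|^\alpha, P_k]$ has kernel $(|x|^\alpha - |y|^\alpha)\,\check m_k(x-y)$, and inserting the cut-offs from~\eqref{eqn:power-comm} gives the kernel
\[ K(x,y) = F_{\lesssim 2^j}(|x|)\,(|x|^\alpha - |y|^\alpha)\,\check m_k(x-y)\,F_{2^j}(y), \]
which is supported in $\{|x| \lesssim 2^j,\ |y| \lesssim 2^j\}$; the commutator $[\jBra{x}^\alpha, P_k]$ with the cut-offs $F_{\le 2^j}$ is handled by the same formula with $|x|^\alpha$ replaced by $\jBra{x}^\alpha$.

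On the support of $K$ we have $|x| + |y| \lesssim 2^j$, and since $\rho \in [0, \min(\alpha,1)]$ forces $\alpha - \rho \geq 0$, \Cref{lem:elem-ineq} gives $||x|^\alpha - |y|^\alpha| \lesssim (|x|+|y|)^{\alpha - \rho} |x-y|^\rho \lesssim 2^{(\alpha-\rho)j} |x-y|^\rho$; similarly \Cref{cor:elem-ineq} gives $|\jBra{x}^\alpha - \jBra{y}^\alpha| \lesssim \jBra{2^j}^{\alpha - \rho}|x-y|^\rho$ on the support of the corresponding kernel. Thus $|K(x,y)| \lesssim 2^{(\alpha-\rho)j}\,|x-y|^\rho\,|\check m_k(x-y)|$ (resp.\ with $\jBra{2^j}^{\alpha-\rho}$ in place of $2^{(\alpha-\rho)j}$), so Schur's test bounds the operator norm by
\[ \sup_x \int |K(x,y)|\,dy + \sup_y \int |K(x,y)|\,dx \lesssim 2^{(\alpha-\rho)j}\int |z|^\rho |\check m_k(z)|\,dz. \]
A change of variables $z = 2^{-k} w$ then gives $\int |z|^\rho |\check m_k(z)|\,dz = 2^{-\rho k}\int |w|^\rho |\psi(w)|\,dw \lesssim_\rho 2^{-\rho k}$ by the rapid decay of $\psi$, which is exactly~\eqref{eqn:power-comm}, and the same computation with $\jBra{2^j}^{\alpha-\rho}$ gives~\eqref{eqn:jBra-comm}.

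I do not expect a serious obstacle: the estimate is essentially a scaling computation once the elementary inequalities of the previous corollaries are in hand. The only points needing a little care are (i) that $\alpha - \rho \geq 0$, so that $t \mapsto t^{\alpha-\rho}$ is nondecreasing and $(|x|+|y|)^{\alpha-\rho} \lesssim 2^{(\alpha-\rho)j}$ on the support of $K$ uniformly for $2^j$ both small and large; (ii) that the smooth cut-offs $F_{\lesssim 2^j}$, $F_{2^j}$, $F_{\le 2^j}$ are all supported in $\{|x| \lesssim 2^j\}$, which is what makes the kernel-support restriction legitimate; and (iii) the endpoint $\rho = 0$, where one simply uses $||x|^\alpha - |y|^\alpha| \lesssim 2^{\alpha j}$ on the support together with $\int |\check m_k|\,dz \lesssim 1$. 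Since \Cref{lem:elem-ineq} already supplies the weight bound for every admissible $\rho$, no interpolation step is required.
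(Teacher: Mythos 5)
Your proof is correct and follows essentially the same route as the paper's: both represent the commutator $[\,|x|^\alpha, P_k]$ (resp.\ $[\jBra{x}^\alpha, P_k]$) by the kernel $(|x|^\alpha - |y|^\alpha)\,2^k K(2^k(x-y))$, apply \Cref{lem:elem-ineq} (resp.\ \Cref{cor:elem-ineq}) to bound the weight difference by $2^{(\alpha-\rho)j}|x-y|^\rho$ on the relevant support, and then bound the resulting convolution-type kernel in operator norm, picking up the factor $2^{-\rho k}$ by scaling. The only cosmetic difference is that you invoke Schur's test and spell out the $\rho = 0$ endpoint explicitly, while the paper states the kernel bound and the operator norm estimate more tersely.
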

\begin{proof}
    For~\eqref{eqn:power-comm}, we observe that $P_k$ has a kernel given by
    $$2^k K(2^k(x-y))$$
    where $K(z)$ is smooth and rapidly decaying.  Thus, using~\Cref{lem:elem-ineq}, we have that
    \begin{equation*}\begin{split}
        \lVert F_{\lesssim 2^j}(x) [|x|^\alpha, P_k] F_{2^j}(x) \rVert_{L^2 \to L^2} =& \lVert F_{\lesssim j}(x) F_{j}(y) \left(|x|^\alpha - |y|^\alpha\right) 2^k K(2^k(x-y)) \rVert_{L^2 \to L^2}\\
        \lesssim& 2^{(\alpha - \rho) j} \lVert 2^k |x-y|^\rho |K(2^k(x-y))| \rVert_{L^2 \to L^2}\\
        \lesssim& 2^{(\alpha - \rho) j} 2^{-\rho k}
    \end{split}\end{equation*}
    To obtain~\eqref{eqn:jBra-comm}, we repeat the argument using~\Cref{cor:elem-ineq}.
\end{proof}

\subsection{Propagation estimates}

At several points in our argument, we will need to use the method of \textit{propagation estimates}.  Given an operator $B$ and a function $u: \bbR \times \bbR \to \bbC$, we define
\begin{equation*}
    \langle B \rangle_{t,u} := \langle Bu(t), u(t) \rangle
\end{equation*}
When the function $u$ is clear from context, we will often omit it from the notation and simply write $\langle B \rangle_t$.  We will also call the function $B$ the \textit{propagation observable}.  If $u$ solves
the abstract Schr\"odinger equation
\begin{equation*}
    i\partial_t u = A(t) u
\end{equation*}
with $A(t)$ self-adjoint, and if $B(t)$ is a (possibly time-dependent) propagation observable, then we have the identity
\begin{equation}\label{eqn:PO-deriv}
    \frac{d}{dt} \langle B(t) \rangle_t := \langle D_H B(t) \rangle_t
\end{equation}
where
\begin{equation}\label{eqn:H-deriv-def}
    D_H B(t) := \frac{d}{dt} B(t) - i[A(t), B(t)]
\end{equation}
is the Heisenberg derivative.  Integrating gives the identity
\begin{equation*}
    \langle B(t_2) \rangle_{t_2} - \langle B(t_1)\rangle_{t_1} = \int_{t_1}^{t_2} \langle D_H B(t) \rangle_t\;dt
\end{equation*}

\section{Construction of the free channel}\label{sec:free-channel}
The free channel wave operator $\Omega_\text{free}$ is defined by
\begin{equation}\label{eqn:free-channel-def}
    \Omega_\text{free} u_0 := \slim_{t \to \infty} e^{it\Delta} \Jfree u(t)
\end{equation}
where $\Jfree$ is the projection
\begin{equation}\label{eqn:Jfree-def}
    \Jfree = e^{-it\Delta} F_{\leq t^\alpha}(|x|) e^{it\Delta} F_{\geq t^{-\delta}}(|D|)
\end{equation}
To justify the name `free channel,' we observe that if $\Omega_\text{free} u_0$ exists, then for any $\phi \in H^1$,
\begin{equation*}\begin{split}
    \lim_{t \to \infty} \left\langle e^{-it\Delta} \phi, u(t) \right\rangle_{H^1} =& \lim_{t \to \infty} \left\langle \phi, e^{it\Delta} \Jfree u(t) \right\rangle_{H^1} + \lim_{t \to \infty} \left\langle \phi, e^{it\Delta}(I - \Jfree) u(t) \right\rangle_{H^1}\\
    =& \langle \phi, \Omega_\text{free} u_0 \rangle
\end{split}\end{equation*}
since
$$e^{it\Delta}(I - \Jfree) = F_{\geq t^\alpha}(|x|) e^{it\Delta} F_{\geq t^{-\delta}}(|D|) + e^{it\Delta} F_{\leq t^{-\delta}}(|D|)$$
converges weakly to $0$ in $H^1$.  Thus, $u(t) - e^{-it\Delta} \Omega_\text{free} u(t)$ is weakly orthogonal to all free waves $e^{-it\Delta} \phi$, which means that $e^{-it\Delta} \Omega_\text{free} u_0$ is the free component of $u(t)$.

The definition of $\Omega_\text{free}$ appears to depend on $\alpha$ and $\delta$ through $\Jfree$, suggesting that we should write $\Omega_\text{free}^{\alpha,\delta}$ instead of $\Omega_\text{free}$. In fact, $\Omega_\text{free}^{\alpha,\delta} = \Omega_\text{free}^{\alpha',\delta'}$ whenever both wave operators exist, so there is no ambiguity.  To see this, observe that
\begin{equation*}\begin{split}
    \Omega_\text{free}^{\alpha,\delta} u_0 - \Omega_\text{free}^{\alpha',\delta'} u_0 =& \slim_{t \to \infty} e^{it\Delta} (\Jfree^{\alpha,\delta} - \Jfree^{\alpha',\delta'}) u(t)\\
    =& \wlim_{t \to \infty} e^{it\Delta} (\Jfree^{\alpha,\delta} - \Jfree^{\alpha',\delta'}) u(t)\\
    =& \wlim_{t \to \infty} \left(F_{\leq t^\alpha}(|x|) F_{\geq t^{-\delta}}(|D|) - F_{\leq t^{\alpha'}}(|x|)  F_{\geq t^{\delta'}}(|D|)\right) e^{-it\Delta} u(t)
\end{split}\end{equation*}
Now, 
\begin{equation*}\begin{split}
    F_{\leq t^\alpha}(|x|) F_{\geq t^{-\delta}}(|D|) - F_{\leq t^{\alpha'}}(|x|)  F_{\geq t^{\delta'}}(|D|) =& \left(F_{\leq t^\alpha}(|x|) - F_{\leq t^{\alpha'}}(|x|) \right) F_{\geq t^{-\delta'}}(|D|)\\
    &+  F_{\geq t^{-\delta}}(|D|) - F_{\geq t^{-\delta'}}(|D|)\\
    &- F_{\geq t^\alpha}(|x|)  \left(F_{\geq t^{-\delta}}(|D|) - F_{\geq t^{-\delta'}}(|D|) \right)
\end{split}\end{equation*}
In particular, the operators on the right all converge strongly to $0$ in $L^2$, so 
\begin{equation*}
    \wlim_{t \to \infty} \left(F_{\leq t^\alpha}(|x|) F_{\geq t^{-\delta}}(|D|) - F_{\leq t^{\alpha'}}(|x|)  F_{\geq t^{\delta'}}(|D|)\right) e^{-it\Delta} u(t) = 0
\end{equation*}
and hence $\Omega_\text{free}^{\alpha,\delta} = \Omega_\text{free}^{\alpha',\delta'}$.  For this reason, we will omit the parameters $\alpha$ and $\delta$ in $\Omega_\text{free}$.  For appropriate choices of $\alpha$ and $\delta$, the free channel wave operator $\Omega_\text{free}$ exists (cf.~\cite[Theorem 18]{sofferScatteringLocalizedStates2024a}):
\begin{prop}
    For $\alpha$ and $\delta$ satisfying
    \begin{equation}\begin{split}
        0 < \alpha <& 1 \\
        0 < \delta <& \min(1/2, 1-\alpha)
    \end{split}
    \end{equation}
    the strong limit defining the free channel wave operator in~\eqref{eqn:free-channel-def} exists in $H^1$.
\end{prop}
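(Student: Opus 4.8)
The plan is to prove existence of the strong $H^1$ limit by Cook's method: it suffices to show $\frac{d}{dt}\bigl(e^{it\Delta}\Jfree u(t)\bigr) \in L^1_t H^1_x$, so that $e^{it\Delta}\Jfree u(t)$ is Cauchy in $H^1$. Using the identity $e^{it\Delta}\Jfree = F_{\leq t^\alpha}(|x|)\,F_{\geq t^{-\delta}}(|D|)\,e^{it\Delta}$ — the conjugating factors $e^{\mp it\Delta}$ in the definition \eqref{eqn:Jfree-def} of $\Jfree$ are placed precisely so this holds — together with the equation in the form $\partial_t\bigl(e^{it\Delta}u\bigr) = i\,e^{it\Delta}V(t)u(t)$, a direct computation gives
\begin{equation*}
\frac{d}{dt}\bigl(e^{it\Delta}\Jfree u\bigr) = \rmI + \rmII + \rmIII ,
\end{equation*}
where $\rmIII = i\,F_{\leq t^\alpha}(|x|)F_{\geq t^{-\delta}}(|D|)e^{it\Delta}V(t)u(t)$ is the contribution of the source $Vu$ (the ``$\Delta$ from $e^{it\Delta}$'' cancels the ``$\Delta$ from the equation''), while differentiating the two cut-offs produces
\begin{equation*}
\rmI = -\tfrac{\alpha}{t}\,\widetilde F(|x|/t^\alpha)\,F_{\geq t^{-\delta}}(|D|)\,e^{it\Delta}u(t), \qquad
\rmII = \tfrac{\delta}{t}\,F_{\leq t^\alpha}(|x|)\,\widetilde G_t(D)\,e^{it\Delta}u(t),
\end{equation*}
with $\widetilde F(r) = r F_\leq'(r)$ supported in $r \sim 1$ and $\widetilde G_t(\xi) = (t^\delta|\xi|)F_\geq'(t^\delta|\xi|)$ uniformly bounded and supported in $|\xi| \sim t^{-\delta}$. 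Throughout, the upgrade from $L^2$- to $H^1$-bounds is routine and I suppress it: $\jBra{D}$ commutes with $F_{\geq t^{-\delta}}(|D|)$, $\widetilde G_t(D)$ and $e^{it\Delta}$, while its commutators with the spatial cut-offs cost only $O(t^{-\alpha})$ and are lower order.

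For $\rmIII$ I split $V(t) = F_{\leq t^\beta}(|x|)V(t) + F_{\geq t^\beta}(|x|)V(t)$ with $\beta$ fixed in $(1/\sigma,\,1-\delta)$, which is nonempty since $\sigma > 2$ forces $1/\sigma < 1/2 < 1-\delta$. The far piece satisfies $\lVert F_{\geq t^\beta}(|x|)V(t)u(t)\rVert_{H^1} \lesssim \jBra{t}^{-\beta\sigma}$ by the pointwise bounds on $V$ and $\partial_x V$, and since $F_{\leq t^\alpha}(|x|)F_{\geq t^{-\delta}}(|D|)e^{it\Delta}$ is bounded on $H^1$ uniformly in $t$, this contributes $\lesssim \jBra{t}^{-\beta\sigma}$. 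The near piece $F_{\leq t^\alpha}(|x|)F_{\geq t^{-\delta}}(|D|)e^{it\Delta}F_{\leq t^\beta}(|x|)\bigl(V(t)u(t)\bigr)$ is $O_M(t^{-M})$ by \Cref{lem:space-freq-loc-decay} (equivalently \Cref{cor:loc-decay-weighted}), i.e.\ by non-stationary phase on the Fourier side. As $\beta\sigma > 1$, both are integrable in $t$, so $\rmIII \in L^1_t H^1_x$.

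The terms $\rmI$ and $\rmII$ are the heart of the matter, and here is where I expect the main obstacle: they cannot be controlled by operator-norm estimates alone, because $u(t)$ need not be small on the phase-space slab $\{|x| \sim t^\alpha,\ |\xi| \gtrsim t^{-\delta}\}$ that $\rmI$ sees (similarly for $\rmII$). The required input is a \emph{minimal-velocity} (boundary-flux) propagation estimate: a part of $u(t)$ concentrated near $|x| \sim t^\alpha$ with frequency $\gtrsim t^{-\delta}$ travels outward at speed $\gtrsim t^{-\delta}$ and hence leaves $\{|x| \lesssim t^\alpha\}$ within time $\sim t^{\alpha+\delta} \ll t$ (using $\alpha + \delta < 1$), while the decay of $V$ is too weak to reconfine it. Concretely, I would establish
\begin{equation*}
\int_1^\infty t^{-1+\eta}\Bigl( \bigl\lVert \widetilde F(|x|/t^\alpha)\,F_{\geq t^{-\delta}}(|D|)\,e^{it\Delta}u(t)\bigr\rVert_{H^1}^2 + \bigl\lVert F_{\leq t^\alpha}(|x|)\,\widetilde G_t(D)\,e^{it\Delta}u(t)\bigr\rVert_{H^1}^2 \Bigr)\,dt < \infty
\end{equation*}
for some small $\eta > 0$; Cauchy--Schwarz in $t$ (writing $t^{-1} = (t^{-1+\eta})^{1/2}\,t^{-1/2-\eta/2}$) then gives $\rmI,\rmII \in L^1_t H^1_x$ and finishes the proof. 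This estimate itself follows from the propagation-observable identity \eqref{eqn:PO-deriv}: take $\Phi(t) = F_{\geq t^{-\delta}}(|D|)\,e^{-it\Delta}\,\chi(|x|/t^\alpha)\,e^{it\Delta}\,F_{\geq t^{-\delta}}(|D|)$ with $\chi$ smooth, nonincreasing, and $\chi' \sim -\widetilde F^2$; check that $\jBra{\Phi(t)}_t$ is bounded; and bound the Heisenberg derivative $D_H\Phi(t)$ from below by $c\,t^{-1+\eta}$ times the integrand above — the velocity weight $|\xi| \gtrsim t^{-\delta}$ from the free part of the commutator, against the $t^{-\alpha}$ from $\chi'(|x|/t^\alpha)\,t^{-\alpha}$, produces a gain $\sim t^{-\alpha-\delta}$ which dominates $t^{-1+\eta}$ precisely because $\alpha+\delta < 1$ — modulo errors from $i[V,\Phi(t)]$ and from $\partial_t$ of the cut-offs that are integrable in time; the hypothesis $\sigma > 2$ (with $\alpha$ taken close to $1$, which is legitimate since $\Omega_\textup{free}$ has already been shown independent of $\alpha,\delta$) is what makes the $V$-error integrable. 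Alternatively, this propagation estimate may be quoted from \cite[Theorem 18]{sofferScatteringLocalizedStates2024a}, which is proved under the same hypotheses.
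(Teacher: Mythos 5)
Your setup (Cook's method, the identity $e^{it\Delta}\Jfree = \tJfree(t)e^{it\Delta}$, and the three-term splitting of $\partial_t(e^{it\Delta}\Jfree u)$) matches the paper exactly, and your treatment of the source term $\rmIII$ via the $\beta$-splitting of $V$ is precisely the paper's \Cref{cor:loc-decay-weighted}. The divergence — and the gap — is in how you handle $\rmI,\rmII$. You want to conclude $\int t^{-1}\lVert\widetilde F(|x|/t^\alpha)F_{\geq t^{-\delta}}(|D|)e^{it\Delta}u\rVert_{H^1}\,dt<\infty$ by Cauchy--Schwarz in $t$ against a propagation estimate with an improved weight $t^{-1+\eta}$, $\eta>0$. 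That stronger estimate is false under the hypotheses: taking $V\equiv 0$ so that $e^{it\Delta}u(t)=u_0$, the quantity $\lVert\widetilde F(|x|/t^\alpha)u_0\rVert_{L^2}^2$ is simply the mass of $u_0$ on the annulus $|x|\sim t^\alpha$, and after the change of variables $r=t^\alpha$ the proposed integral becomes $\sim\int r^{-1+\eta/\alpha}\lVert\widetilde F(|x|/r)u_0\rVert^2\,dr$, which diverges for any $\eta>0$ once $u_0\in H^1$ has, say, logarithmic spatial decay ($|u_0|^2\sim|x|^{-1}\log^{-2}|x|$). There is no phase-space region to squeeze; the bound $\sup_t\lVert u\rVert_{H^1}<\infty$ gives no spatial decay rate. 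Moreover, the mechanism you invoke to produce the gain does not exist: in your observable $\Phi(t)=F_{\geq t^{-\delta}}(|D|)\,e^{-it\Delta}\chi(|x|/t^\alpha)e^{it\Delta}\,F_{\geq t^{-\delta}}(|D|)$, the $\Delta$-commutator generated by $\partial_t(e^{-it\Delta}\chi e^{it\Delta})$ cancels exactly against $i[\Delta,\Phi]$ in the Heisenberg derivative, leaving only $\partial_t\chi$ and the $V$-commutator. Equivalently, conjugating the state to $\psi=e^{it\Delta}u$ (as the paper does) leaves the tiny Hamiltonian $e^{it\Delta}Ve^{-it\Delta}$ with no $\Delta$ at all, hence no minimal-velocity gain. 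Only the $t^{-1}$-weighted estimate (from telescoping of $\partial_s F_{\leq s^\alpha}$) is available, and Cauchy--Schwarz in $t$ against $t^{-1}$ costs $\lVert t^{-1/2}\rVert_{L^2(1,\infty)}=\infty$.

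The paper circumvents this without improving the propagation estimate. The trick is dual/bilinear: to bound $\lVert\int_1^t A_1(s)\psi(s)\,ds\rVert_{L^2}$, pair against a fixed test function $\phi$, observe that $\partial_s F_{\leq s^\alpha}(|x|)\geq 0$ and split its square root symmetrically,
\begin{equation*}
\int_1^t\bigl\langle\phi, \partial_s F_{\leq s^\alpha}(|x|)F_{\geq s^{-\delta}}(|D|)\psi\bigr\rangle\,ds
=\int_1^t\bigl\langle\sqrt{\partial_s F_{\leq s^\alpha}(|x|)}\,\phi,\ \sqrt{\partial_s F_{\leq s^\alpha}(|x|)}\,F_{\geq s^{-\delta}}(|D|)\psi\bigr\rangle\,ds ,
\end{equation*}
then Cauchy--Schwarz \emph{in $(s,x)$ jointly}. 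The factor with $\phi$ telescopes because $\phi$ is time-independent: $\lVert\sqrt{\partial_s F_{\leq s^\alpha}}\phi\rVert_{L^2_{s,x}}^2=\langle\phi,(F_{\leq t^\alpha}-F_{\leq 1})\phi\rangle\leq\lVert\phi\rVert^2$. The other factor is exactly the $t^{-1}$-weighted propagation estimate of \Cref{lem:pres-wave-op}, proven by the positive-commutator identity~\eqref{eqn:symm-formula} for the observable $B(s)=F_{\geq s^{-\delta}}(|D|)F_{\leq s^\alpha}(|x|)F_{\geq s^{-\delta}}(|D|)$ against $\psi$, with the $V$-commutator integrable by \Cref{cor:loc-decay-weighted}. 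This half-and-half distribution of the square root across the two slots replaces the $\eta$-gain you wanted, and it is the key step your proposal is missing.
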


We argue using Cook's method.  Defining
$$\Omega_\text{free}(t) u_0 = e^{is\Delta} \Jfree u(s)$$
we see that
$$\Omega_\text{free}  = \slim_{t\to\infty} \Omega_\text{free}(t)$$
Now, 
\begin{subequations}\begin{align}
    \Omega_\text{free}(t) u_0 - \Omega_\text{free}(1) u_0 =& \int_1^t \partial_s \left(\Omega_\text{free}(s) u_0\right)\;ds\notag\\
    =& \int_1^t  \tJfree(s) e^{is\Delta} Vu(s)\;ds\label{eqn:cooks-pot}\\
    &+ \int_1^t e^{-is\Delta} \partial_s \tJfree(s) e^{is\Delta} u(s)\;ds \label{eqn:channel-err}
\end{align}\end{subequations}
where 
\begin{equation*}
    \tJfree(s) = F_{\leq s^\alpha}(|x|) F_{\geq s^{-\delta}}(|D|)
\end{equation*}
Following~\cite{sofferScatteringLocalizedStates2024a}, we will call~\eqref{eqn:cooks-pot} the potential term and~\eqref{eqn:channel-err} the channel restriction error term.  Since $\Omega_\text{free}(1)u_0$ is in $H^1$, it suffices to prove that the integrals giving the potential term and the channel restriction error converge strongly as $t \to \infty.$

\textbf{The potential term:} We begin by considering the potential term.  Using~\Cref{cor:loc-decay-weighted} and recalling the decay assumptions~\cref{eqn:V-decay,eqn:dV-decay}, we see that the integrand for~\eqref{eqn:cooks-pot} is $L^1$ in time, so~\eqref{eqn:cooks-pot} converges strongly as $t \to \infty$.

\textbf{The channel restriction error:} We will now show how to obtain convergence for the channel restriction error term~\eqref{eqn:channel-err}.  We begin by writing
\begin{equation*}\begin{split}
    \partial_s \tJfree(s) =& \partial_s F_{\leq s^\alpha}(|x|) F_{\geq s^{-\delta}}(|D|) + F_{\leq s^\alpha}(|x|) \partial_s F_{\geq s^{-\delta}}(|D|)\\
                            =:& A_1(s) + A_2(s)
\end{split}\end{equation*}
Focusing on $A_1(s)$ and writing $\psi(s) = e^{is\Delta} u(s)$, we see that by duality,
\begin{equation*}\begin{split}
    \left\lVert \int_1^t A_1(s) \psi(s)\;ds \right\rVert_{L^2} =& \smashoperator[l]{\sup_{\lVert \phi \rVert_{L^2} = 1}}\int_1^t \langle \phi,  \partial_s F_{\leq s^\alpha}(|x|) F_{\geq s^{-\delta}}(|D|) \psi(s)\rangle\;ds\\
    \leq& \smashoperator[l]{\sup_{\lVert \phi \rVert_{L^2} = 1}} \left\lVert \sqrt{\partial_s F_{\leq s^\alpha}(|x|)} \phi \right\rVert_{L^2_{s,x}} \left\lVert \sqrt{\partial_s F_{\leq s^\alpha}(x)} F_{\geq s^{-\delta}}(|D|) \psi(s)\right\rVert_{L^2_{s,x}}
\end{split}\end{equation*}
Since $\phi$ is time-independent, the first term can be rewritten as
\begin{equation*}\begin{split}
    \left\lVert \sqrt{\partial_s F_{\leq s^\alpha}(x)} \phi \right\rVert_{L^2_s(0,t; L^2_x)} =& \sqrt{\left\langle \phi, \int_1^t \partial_s F_{\leq s^\alpha}(|x|) \;ds \phi \right\rangle}\\
    =& \sqrt{\langle \phi, (F_{\leq t^\alpha}(|x|) -F(|x|) \phi \rangle}\\
    \leq& \lVert \phi \rVert_{L^2}
\end{split}\end{equation*}
Thus, it is enough to prove that $\sqrt{\partial_s F_{\leq s^\alpha}(|x|)} F_{\geq s^{-\delta}}(|D|) e^{is\Delta}u(s) \in L^2_{s,x}$.
\begin{lemma}\label{lem:pres-wave-op}
    For $u$ satisfying~\eqref{eqn:main-eqn},
    \begin{equation}\label{eqn:A1-L2-bd}
        \left\lVert \sqrt{\partial_s F_{\geq s^\alpha}(|x|)} F_{\geq s^{-\delta}}(|D|) e^{is\Delta} u(s) \right\rVert_{L^2_{s,x}} \lesssim \lVert u \rVert_{L^\infty_sL^2_x}
    \end{equation}
\end{lemma}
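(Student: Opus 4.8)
The plan is to prove~\eqref{eqn:A1-L2-bd} via a propagation estimate. Put $\psi(s) := e^{is\Delta}u(s)$; since $u$ solves~\eqref{eqn:main-eqn}, $\psi$ solves the interaction‑picture equation $i\partial_s\psi = -V_I(s)\psi$ with $V_I(s) := e^{is\Delta}Ve^{-is\Delta}$, and because $e^{is\Delta}$ commutes with $F_{\geq s^{-\delta}}(|D|)$, the square of the left side of~\eqref{eqn:A1-L2-bd} is exactly
\[
    \mathcal{M} := \int_1^\infty \big\langle F_{\geq s^{-\delta}}(|D|)\,\big(\partial_s F_{\leq s^\alpha}(|x|)\big)\,F_{\geq s^{-\delta}}(|D|)\,\psi(s),\, \psi(s)\big\rangle\,ds \;\geq\; 0 .
\]
I would estimate $\mathcal M$ by integrating the \emph{total} $s$‑derivative of $\langle F_{\geq s^{-\delta}}(|D|)F_{\leq s^\alpha}(|x|)F_{\geq s^{-\delta}}(|D|)\psi(s),\psi(s)\rangle$ from $1$ to $\infty$ — equivalently, by running the propagation‑estimate identity~\eqref{eqn:PO-deriv} for the observable $e^{-is\Delta}F_{\geq s^{-\delta}}(|D|)F_{\leq s^\alpha}(|x|)F_{\geq s^{-\delta}}(|D|)e^{is\Delta}$ acting on $u$, for which the $[\Delta,\cdot]$ part of the Heisenberg derivative is automatically absorbed since conjugation by $e^{\pm is\Delta}$ turns $F_{\leq s^\alpha}(|x|)$ into a function of the free‑flow‑conserved operator $x+2sD$.

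Expanding the total derivative by Leibniz produces $\mathcal M$ together with three kinds of terms: a boundary term, a ``potential'' term coming from $\partial_s\psi = -iV_I(s)\psi$, and an ``error'' term $\mathcal E$ coming from $\partial_s$ falling on the frequency cutoffs. The boundary term is $\lesssim \sup_s\|u(s)\|_{L^2}^2$ because every factor has operator norm $\lesssim 1$. The potential term is the clean part: using $V_I(s)\psi(s) = e^{is\Delta}Vu(s)$ and that $|V(x)|\lesssim\jBra{x}^{-\sigma}$ makes $Vu(s)$ spatially localized uniformly in time (indeed $\sup_s\|\jBra{x}^\sigma Vu(s)\|_{L^2}\lesssim \|u\|_{L^\infty_s L^2_x}$), I would split $Vu(s) = F_{\leq s^\beta}(|x|)Vu(s) + F_{\geq s^\beta}(|x|)Vu(s)$ with $1/\sigma < \beta < 1-\delta$ (possible since $\sigma>2$ and $\delta<1/2$). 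The tail contributes $\lesssim s^{-\beta\sigma}\|u\|_{L^\infty_s L^2_x}^2$, which is integrable because $\beta\sigma>1$; the localized part is annihilated to order $s^{-M}$ by~\Cref{lem:space-freq-loc-decay} (equivalently~\Cref{cor:loc-decay-weighted}), since $F_{\leq s^\alpha}(|x|)F_{\geq s^{-\delta}}(|D|)e^{is\Delta}F_{\leq s^\beta}(|x|)$ decays arbitrarily fast once $\delta<\min(1/2,1-\alpha,1-\beta)$. This is precisely why the frequency projection $F_{\geq s^{-\delta}}(|D|)$ is built into $\Jfree$: without it the potential term would not be time‑integrable.

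The error term $\mathcal E = \int_1^\infty\big\langle\big([\partial_s F_{\geq s^{-\delta}}(|D|)]F_{\leq s^\alpha}(|x|)F_{\geq s^{-\delta}}(|D|) + \text{adjoint}\big)\psi(s),\psi(s)\big\rangle\,ds$ is the delicate one, and I expect it to be the main obstacle. Its symbol lives at $|\xi|\sim s^{-\delta}$ with amplitude $\lesssim s^{-1}$, so the naive bound is only $s^{-1}\|u\|_{L^\infty_sL^2_x}^2$ — not integrable — and one cannot afford to discard the spatial cutoff (the operator $F_{\leq s^\alpha}(|x|)e^{is\Delta}F_{\sim s^{-\delta}}(|D|)$ is $O(1)$ in operator norm, because of incoming wave packets). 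The idea to exploit is that $\mathcal E$ appears in the identity $\mathcal M = (\text{boundary}) - (\text{potential}) - \mathcal E$, so it enters with a favorable sign, and symmetrizing the $ABC+CBA$‑structure of its integrand via~\eqref{eqn:symm-formula} (with $A=\sqrt{\partial_s F_{\geq s^{-\delta}}(|D|)}$, $B=F_{\leq s^\alpha}(|x|)$, $C=\sqrt{F_{\geq s^{-\delta}}(|D|)}$, which commute as Fourier multipliers) writes it as a manifestly positive operator plus a remainder $R(A,B,C)$ built from double commutators of Fourier multipliers at scale $s^{-\delta}$ with a spatial cutoff at scale $s^\alpha$. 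Dropping the positive part, one is left to show $\int_1^\infty\|R(s)\|_{L^2\to L^2}\,ds<\infty$, which is where the decay of the commutators (cf.~\Cref{cor:comm-cor}) and, when the scales are unfavorable, the dispersive smallness of $F_{\leq s^\alpha}(|x|)$ against free‑evolved low frequencies (as in~\Cref{lem:space-freq-loc-decay,lem:proj-fast-decay-2}) must be combined. Morally this reflects the fact that the frequency‑$s^{-\delta}$ part of $\psi(s)=e^{is\Delta}u(s)$ is concentrated near $|x|\sim s^{1-\delta}\gg s^\alpha$ (as $\delta<1-\alpha$) and so barely meets the shell $|x|\sim s^\alpha$ carrying the weight $\partial_s F_{\leq s^\alpha}(|x|)$. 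An alternative is to insert the Duhamel representation $\psi(s)=\psi(1)+i\int_1^s e^{ir\Delta}Vu(r)\,dr$: the $\psi(1)$ contribution is elementary (the frequency cutoff discards the dangerous low frequencies and $\int_1^\infty s^{-1}\|w\|_{L^2(|x|\sim s^\alpha)}^2\,ds\lesssim\|w\|_{L^2}^2$ for any fixed $w$), while the Duhamel integral requires handling the mismatch between the evolution time $r$ and the cutoff scales $s^\alpha,s^{-\delta}$, most cleanly via the rapid decay of~\Cref{lem:space-freq-loc-decay} for $r$ small against $s$, dispersive decay, and a Christ--Kiselev‑type device for $r$ comparable to $s$.
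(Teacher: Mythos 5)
Your proposal takes essentially the same route as the paper: you use the propagation observable $B(s)=F_{\geq s^{-\delta}}(|D|)F_{\leq s^\alpha}(|x|)F_{\geq s^{-\delta}}(|D|)$ acting on $\psi(s)=e^{is\Delta}u(s)$, control the commutator with $e^{is\Delta}Ve^{-is\Delta}$ via \Cref{cor:loc-decay-weighted} and the polynomial decay of $V$, and symmetrize the two $\partial_s F_{\geq s^{-\delta}}(|D|)$ terms via \eqref{eqn:symm-formula} to isolate a nonnegative operator plus a double-commutator remainder bounded by \Cref{cor:comm-cor}, which is exactly the argument given in the paper's proof of \Cref{lem:pres-wave-op}. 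The only departures are cosmetic (you make the $\beta$-split in the potential term explicit, and you sketch an alternate Duhamel/Christ--Kiselev route as a fallback), so the proposal is correct and matches the paper.
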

\begin{proof}
    To begin, we define the observable
    \begin{equation*}
        B(s) = F_{\geq s^{-\delta}}(|D|) F_{\leq s^\alpha}(|x|) F_{\geq s^{-\delta}}(|D|)
    \end{equation*}
    Noting that $\psi$ satisfies the abstract Schr\"odinger equation
    \begin{equation*}
        i\partial_t \psi = e^{is\Delta} V e^{-is\Delta} \psi
    \end{equation*}
    and recalling~\eqref{eqn:PO-deriv} and~\eqref{eqn:H-deriv-def}, we find that
    \begin{subequations}\begin{align}
        \partial_s \langle B(s)\rangle_{s,\psi} =& \left\langle\partial_s F_{\geq s^{-\delta}}(|D|) F_{\leq s^\alpha}(|x|) F_{\geq s^{-\delta}}(|D|)\right\rangle_{s,\psi}\label{eqn:B-deriv-FD-first}\\
        &+ \left\langle F_{\geq s^{-\delta}}(|D|) F_{\leq s^\alpha}(|x|) \partial_sF_{\geq s^{-\delta}}(|D|)\right \rangle_{s,\psi}\label{eqn:B-deriv-FD-last}\\
        &+ \left\langle F_{\geq s^{-\delta}}(|D|) \partial_s F_{\leq s^\alpha}(|x|) F_{\geq s^{-\delta}}(|D|) \right \rangle_{s,\psi}\label{eqn:B-deriv-Fx}\\
        &-i \left\langle \left[e^{is\Delta} V e^{-is\Delta}, F_{\geq s^{-\delta}}(|D|) F_{\leq s^\alpha}(|x|) F_{\geq s^{-\delta}}(|D|)\right] \right \rangle_{s,\psi}\label{eqn:B-deriv-pot-comm}
    \end{align}\end{subequations}
    For~\eqref{eqn:B-deriv-pot-comm}, we note that
    \begin{equation*}\begin{split}
        \lVert V e^{is\Delta} F_{\geq s^{-\delta}}(|D|) F_{\leq s^\alpha}(|x|) \rVert_{L^2 \to L^2} \lesssim& s^{-\frac{\sigma}{2}}
    \end{split}\end{equation*}
    by~\Cref{cor:loc-decay-weighted}.  A similar estimate holds for $F_{\leq s^\alpha}(|x|) F_{\geq s^{-\delta}}(|D|)  e^{-is\Delta}  V$, so
    \begin{equation*}
        |\eqref{eqn:B-deriv-pot-comm}| \lesssim s^{-\sigma/2}\lVert u \rVert_{L^2}^2 \in L^1_s(1,\infty)
    \end{equation*}
    By using the symmetrization formula~\eqref{eqn:symm-formula}, we can rewrite the sum of~\eqref{eqn:B-deriv-FD-first} and~\eqref{eqn:B-deriv-FD-last} as a nonnegative term and a remainder:
    \begin{equation*}\begin{split}
        \eqref{eqn:B-deriv-FD-first} + \eqref{eqn:B-deriv-FD-last} =& 2 \left\langle \sqrt{F_{\geq s^{-\delta}} \partial_s F_{\geq s^{-\delta}}(|D|)} F_{\geq s^\alpha}(|x|) \sqrt{F_{\geq s^{-\delta}} \partial_s F_{\geq s^{-\delta}}(|D|)} \right\rangle_{s,\psi}\\
        &+ \langle R(s) \rangle_{s,\psi}
    \end{split}\end{equation*}
    Here, the remainder $R(s)$ involves double commutators of $F_{\leq s^\alpha}(|x|)$ with $F_{\geq s^{-\delta}}(|D|
    )$ and $\partial_s F_{\geq s^{-\delta}}(|D|)$.  Each commutator gains a power of  $s^{\delta - \alpha}$, so since 
    $$\lVert \partial_s F_{\geq s^{-\delta}}(|D|) \rVert_{L^2 \to L^2} \lesssim s^{-1}$$
    we have that
    \begin{equation*}
        |\langle R(s) \rangle_{s,\psi}| \lesssim s^{-1-2(\delta - \alpha)}\lVert u(s) \rVert_{L^2}^2 \in L^1_s(1,\infty)
    \end{equation*}
    Finally, we note that
    \begin{equation*}
        \eqref{eqn:B-deriv-Fx} = \left\lVert \sqrt{\partial_s F_{\geq s^\alpha}(|x|)} F_{\geq s^{-\delta}}(|D|) e^{is\Delta} u(s) \right\rVert_{L^2}^2
    \end{equation*}
    is manifestly nonnegative.  Collecting these results, we conclude that
    \begin{equation*}
        \partial_s \langle B(s) \rangle_{s,\psi} + O_{L^1_s(1,\infty)}(\lVert u \rVert_{L^\infty_sL^2_x}) \geq \left\lVert \sqrt{\partial_s F_{\geq s^\alpha}(|x|)} F_{\geq s^{-\delta}}(|D|) e^{is\Delta} u(s) \right\rVert_{L^2_x}^2
    \end{equation*}
    Integrating and using the fact that $|\langle B(s) \rangle_{s,\psi}|
    \leq \lVert u(s) \rVert_{L^2}^2$ now gives~\eqref{eqn:A1-L2-bd}.
\end{proof}
Based on the earlier duality argument, we conclude that
\begin{equation*}
    \int_1^t A_1(s) e^{is\Delta} u(s)\;ds 
\end{equation*}
converges strongly in $L^2$.  In order to upgrade the convergence to $H^1$, we note that
\begin{equation*}
    \lVert [\partial_x, A_1(s)] \rVert_{L^2 \to L^2} \lesssim s^{-1-\alpha} \in L^1_s(1,\infty)
\end{equation*}
so it suffices to show that $\int_1^t A_1(s) e^{is\Delta} \partial_x u(s)\;ds$ converges strongly in $L^2$.  This follows from essentially the same argument as before once we use the propagation observable $-\partial_x B(s) \partial_x$ in place of $B(s)$.  Moreover, the terms involving $A_2(s)$ can be bounded by essentially the same argument as above once we note that
\begin{equation*}
    A_2(s) = \partial_s F_{\geq s^{-\delta}}(|D|) F_{\leq s^{\alpha}}(|x|) + O(s^{-1+\delta -\alpha})
\end{equation*}

\section{Energy localization}\label{sec:energy-loc}

The arguments in~\Cref{sec:free-channel} let us write
\begin{equation*}
    u(t) = u_{\textup{wb}}(t) + e^{-it\Delta} u_+ + o_{H^1}(1)
\end{equation*}
for some $u_+ \in H^1$, where the weakly bound component $u_{\textup{wb}}(t) = (I-\Jfree) u(t)$ has the property that for any $\phi \in H^1$,
\begin{equation*}
    \lim_{t \to \infty} \langle e^{-it\Delta}, u_{\textup{wb}}(t) \rangle = 0
\end{equation*}
In order to prove the first part of~\Cref{thm:main-thm}, we will now show that  we have the further decomposition
\begin{equation}\label{eqn:uwb-decomp}
    u_\textup{wb}(t) = u_\textup{loc}(t) + o_{\dot{H}^1}(1)
\end{equation}
as $t \to \infty$.  To do this, we will first define incoming/outgoing projections, which allow us to obtain stronger dispersive decay estimates for localized functions in the favorable time direction.  Then, using these projections, we carefully decompose $u_{\text{wb}}(t)$ in a localized part and a part that decays in $\dot{H}^1$.

\subsection{Incoming/outgoing projection}\label{sec:in-out}

For $\theta \in (0,1)$ and $j > 0$ we define the projection operators
\begin{equation}\label{eqn:in-out-proj}\begin{split}
    \Pin_{j} =& F_{2^j}(x) F_{\geq 2^{-\theta j}}\left(D\right) + F_{2^j}(-x) F_{\geq 2^{-\theta j}}\left(-D\right)\\
    \Pout_{j} =& F_{2^j}(x) F_{\geq 2^{-\theta j}}\left(-D\right) + F_{2^j}(-x) F_{\geq 2^{-\theta j}}\left(D\right)\\
    \Plow_{j} =& F_{2^j}(|x|)F_{2^{-\theta j}}P\left(|D|\right)\\
    \Pin_0 =& F_{\leq 1}(|x|) F_{\geq 1}(|D|)\\
    \Pout_0 =& 0\\
    \Plow_0 =& F_{\leq 1}(|x|) F_{\leq 1}(|D|)\\
    \Pin_{\geq j} =& F_{\geq 2^j}(x) F_{\geq 2^{-\theta j}}\left(D\right) + F_{\geq 2^j}(-x) F_{\geq 2^{-\theta j}}\left(-D\right)\\
    \Pout_{\geq j} =& F_{\geq 2^j}(x) F_{\geq 2^{-\theta j}}\left(-D\right) + F_{\geq 2^j}(-x) F_{\geq 2^{-\theta j}}\left(D\right)\\
    \Plow_{\geq j} =& F_{\geq 2^j}(x)F_{\leq 2^{-\theta j}}\left(|D|\right)
\end{split}\end{equation}

The incoming/outgoing projectors allow us to gain one derivative and $\theta$ space weights when applied in the favorable time direction to a localized function:
\begin{lemma}\label{lem:in-out-lem}
    Let $n$ be a positive integer.  For $f$ such that $\jBra{x}^\sigma f \in L^\infty_t H^s_x$ with $\sigma > 2$ and $s > s' \geq 0$,
    \begin{equation*}\begin{split}
        \sum_{j= 0}^\infty \left\lVert \jBra{x}^{n\theta}\partial_x^n \Pin_{j} \int_0^t e^{-i(t-s)\Delta} f(s)\;ds \right\rVert_{H^{s}} \lesssim_{s, s'}&  \sum_{m=0}^{n-1}\lVert \jBra{x}^{\sigma+m\theta}\partial_x^m  f \rVert_{L^\infty_t H^{s'}_x}\\
        \sum_{j= 0}^\infty \left\lVert \jBra{x}^{n\theta} \partial_x^n \Pout_{j} \int_t^\infty e^{-i(t-s)\Delta} f(s)\;ds \right\rVert_{H^{s}} \lesssim_{s, s'}& \sum_{m=0}^{n-1} \lVert \jBra{x}^{\sigma+m\theta} \partial_x^m f \rVert_{L^\infty_t H^{s'}_x}
    \end{split}\end{equation*}
\end{lemma}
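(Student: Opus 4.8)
The plan is to reduce the statement to a family of oscillatory‑integral kernel estimates, in which the geometry of the incoming (resp. outgoing) sector supplies the gain of one derivative. First I would dispose of symmetry: the second inequality follows from the first by time reversal (complex conjugation together with $s\mapsto -s$, which interchanges $\Pin$ and $\Pout$ and the two Duhamel integrals), so only the first needs treatment; and by the reflection $x\mapsto -x$ one may replace $\Pin_j$ by its first summand $F_{2^j}(x)F_{\ge 2^{-\theta j}}(D)$ (the $j=0$ term being handled directly and more easily, no spatial weight being present). Since $\jBra{x}^{n\theta}\sim 2^{n\theta j}$ on $\supp F_{2^j}$ and $\partial_x^n$ preserves supports, the task becomes to bound $\sum_{j\ge 1}2^{n\theta j}\bigl\|\partial_x^n F_{2^j}(x)F_{\ge 2^{-\theta j}}(D)\int_0^t e^{-i(t-s)\Delta}f(s)\,ds\bigr\|_{H^s}$. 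I would then decompose everything dyadically: write $f=\sum_{j'\ge 0}\chi_{j'}f$ with $\chi_{j'}$ adapted to $\{|x|\sim 2^{j'}\}$, so that the decay hypotheses give $\|\chi_{j'}\partial_x^m f\|_{H^{s'}}\lesssim 2^{-(\sigma+m\theta)j'}\|\jBra{x}^{\sigma+m\theta}\partial_x^m f\|_{H^{s'}}$, a sum convergent in $j'$ because $\sigma>2$; decompose also into Littlewood–Paley pieces of frequency $\sim 2^{k'}$ (with $k'\gtrsim -\theta j$ forced by the cutoff) and split the $\tau=t-s$ integral. Each resulting piece is then controlled by integrating the Hilbert–Schmidt norm of the kernel of $\jBra{x}^{n\theta}\partial_x^n F_{2^j}(x)F_{\ge 2^{-\theta j}}(D)e^{-i\tau\Delta}\chi_{j'}P_{k'}$ against the corresponding weighted derivative of $f$, the kernel being an oscillatory integral with phase $\phi(\xi)=(x-y)\xi+\tau\xi^2$.

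The incoming geometry now organizes the estimate. On $\supp F_{2^j}$ one has $x>0$, and on the range of the projector $\xi\gtrsim 2^{-\theta j}>0$, so $\phi'(\xi)=(x-y)+2\tau\xi$ vanishes only when $y>x$. In the regime $j'\lesssim j$ there is therefore no stationary point, $|\phi'|\gtrsim\max(2^j,\tau 2^{-\theta j})$, and repeated integration by parts in $\xi$ — exactly as in the proof of \Cref{lem:proj-fast-decay-2} — gives a kernel bound that decays arbitrarily fast and improves with both $2^{k'}$ and $2^j$; this absorbs the $n$ output derivatives (worth $\sim |\xi|^n$), the weight $2^{n\theta j}$, and all the summations, and the whole contribution is controlled by the $m=0$ term $\|\jBra{x}^\sigma f\|_{L^\infty_t H^{s'}_x}$ alone. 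The gain of one derivative is visible here: each integration by parts produces a factor $\lesssim |D|^{-1}$, and $|D|\gtrsim\jBra{x}^{-\theta}$ on the projector's range turns it into $\lesssim\jBra{x}^{\theta}$, so the dispersive decay has room to spare.

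The remaining regime $j'\gtrsim j$ — the source sitting at least as far from the origin as the output — is the heart of the matter and is where the terms $\|\jBra{x}^{\sigma+m\theta}\partial_x^m f\|_{H^{s'}}$ with $1\le m\le n-1$ enter. There $\phi$ has a nondegenerate stationary point $\xi_*\sim 2^{k'}$, active only for $\tau$ in a window of length $\sim 2^{j'-k'}$, on which stationary phase yields only $|\phi''(\xi_*)|^{-1/2}\sim\tau^{-1/2}$ in place of fast decay. I would handle it by Leibniz‑expanding $\partial_x^n[F_{2^j}(x)(\cdots)]$ and commuting all but one of the derivatives through the Fourier multipliers $F_{\ge 2^{-\theta j}}(D)$ and $e^{-i\tau\Delta}$ and past $\chi_{j'}$ (the lower‑order pieces produced there being only better localized), so that $n-1$ derivatives land on $f$, producing $\chi_{j'}P_{k'}\partial_x^{n-1}f$ with $L^2$ norm $\lesssim 2^{-(\sigma+(n-1)\theta)j'}2^{-\max(k',0)s'}\|\jBra{x}^{\sigma+(n-1)\theta}\partial_x^{n-1}f\|_{H^{s'}}$ — precisely the $m=n-1$ term (the $m<n-1$ terms arising from the lower‑order Leibniz pieces and from the non‑stationary parts of this regime). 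The one derivative left on the output side contributes a factor $2^{k'}$ which is exactly cancelled by the $2^{-k'}$ coming from the window length, so no net frequency growth results; the $2^{-\max(k',0)s'}$ then makes the $k'$-sum converge, the $j'$-sum converges because $\sigma>1/2$, and the $j$-sum converges because $\sigma>1+\theta$ (both implied by $\sigma>2$ and $\theta<1$) after combining the spatial decay $2^{-(\sigma+(n-1)\theta)j'}$, the $\|\chi_{j'}\|_{L^2}$-factor $2^{j'/2}$, the dispersive gain $2^{(j-j')/2}$ (a spatially spread source restricted to scale $2^j$), and the weight $2^{n\theta j}$. Throughout, the passage from $L^2$ to $H^s$ and the commutators between the physical cutoffs $F_{2^j},\chi_{j'}$ and the Littlewood–Paley/positive‑frequency projectors cost only an $\varepsilon$ of regularity, quantified via \Cref{cor:comm-cor} and \Cref{lem:half-freq-proj-supp-lem}; this is exactly the role of the slack $s>s'$, which also accommodates the mild delocalization caused by $F_{\ge 2^{-\theta j}}(D)$ (whose symbol is smooth, equal to $1$ minus something supported at frequency scale $2^{-\theta j}$), felt only at spatial scale $2^{\theta j}\le\jBra{x}$.

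The hard part will be the stationary‑phase regime of the last paragraph: arranging that at most $n-1$ derivatives ever fall on $f$ while keeping the leftover derivative harmless, and checking that the weight $2^{n\theta j}$ is genuinely beaten by the combined dispersive, geometric, and decay factors once the $j'$- and $k'$-sums are performed — this is where the hypotheses $\sigma>2$ and $\theta<1$ are used decisively.
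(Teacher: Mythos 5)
Your overall architecture — reduce to the incoming case and to the first summand of $\Pin_j$, decompose the source dyadically in space and frequency, separate the regime where the source is closer to the origin than the output (nonstationary phase, fast decay) from the regime where it is farther (where the main work is), and pass $n-1$ of the $n$ derivatives onto $f$ so that the $\jBra{x}^{\sigma+(n-1)\theta}\partial_x^{n-1}f$ norm appears — is the same skeleton as the paper's. The paper also performs exactly the split you label $j'\lesssim j$ (its piece with $F_{\le 2^{j-10}}(|x|)f$) versus $j'\gtrsim j$ (its pieces with $F_{>2^{j-10}}(|x|)f$), and it also uses the commutator estimates (\Cref{cor:comm-cor}, \Cref{lem:half-freq-proj-supp-lem}) and nonstationary phase exactly where you do.

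The genuine gap is in your treatment of the ``stationary'' regime $j'\gtrsim j$, and it is not a presentational slip: the power counting you sketch does not close under the stated hypotheses. You estimate the operator over the stationary window of length $\sim 2^{j'-k'}$ by a Hilbert--Schmidt bound whose kernel is $\lesssim\tau^{-1/2}$, so that (after reinstating the HS factors from the $x$- and $y$-supports, $2^{j/2}$ and $2^{j'/2}$, and integrating over the window) the operator bound is $\sim 2^{n\theta j+j/2+k'/2+j'}$; against the source norm $2^{-(\sigma+(n-1)\theta)j'}\min(1,2^{-s'k'})$, the $k'$-sum requires $s'>1/2$ and, after summing $j'\ge j$, the $j$-sum requires $\sigma>3/2+\theta$. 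Neither follows from $\sigma>2,\ \theta\in(0,1),\ s'\ge 0$. Your stated ``dispersive gain $2^{(j-j')/2}$'' is not what stationary phase actually delivers at the HS level (the $\tau^{-1/2}\sim 2^{(k'-j')/2}$ combines with the support factor $2^{j'/2}$ to leave $2^{k'/2}$, not a gain in $j'$), and the ``window $2^{-k'}$ cancels the derivative $2^{k'}$'' bookkeeping omits the remaining $2^{j'}$. The paper avoids all of this by never invoking stationary phase in this regime: it splits time at $\tau=2^{j-k}$ (output scale over frequency, not source scale), bounds the short-window piece by unitarity of $e^{-i\tau\Delta}$ alone (giving $2^{(n\theta+1)j}$ with no loss in $j'$ or $k$), and for $\tau>2^{j-k}$ it further splits the source at $|y|\lessgtr\tau 2^{k-10}$, treating the far piece by the weight $(\tau 2^k)^{-\sigma-(n-1)\theta}$ and the near piece by the nonstationary estimate of \Cref{lem:proj-fast-decay-2}. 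That strategy gives $2^{(1+\theta-\sigma)j}$ with the $k$-dependence cancelling, which is why $\sigma>1+\theta$ is enough. To repair your argument you would need to replace the HS-plus-stationary-phase step by the paper's unitarity-plus-shorter-window argument (or otherwise recover the lost $2^{j/2}$ and $2^{k'/2}$), since as written the hard step you flag at the end does indeed fail under the lemma's hypotheses.
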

\begin{proof}
    We will focus on the case $s = 0$, since $s > 0$ can be handled similarly by commuting the extra derivative through the projection.  We will also focus on the incoming projections.  For the incoming projections, and for notational simplicity we will first give the argument for $j > 0$, then briefly discuss the modifications necessary to address the case $j = 0$.  
    
    To begin, we project to frequencies of size $2^k \gtrsim 2^{-\theta j}$ and decompose the integral as:
    \begin{subequations}\begin{align}
        \jBra{x}^{n\theta}\partial_x^n \Pin_j P_k \!\!\int_0^t \!\!e^{-i(t-s)\Delta} f(s) ds =&\jBra{x}^{n\theta}\partial_x^n \Pin_j P_k \!\! \int_0^t \!\!e^{-i(t-s)\Delta} F_{\leq 2^{j-10}}(|x|)  f ds\label{eqn:x-dx-Pin-bd-1}\\
        &\hspace{-4em}+ \jBra{x}^{n\theta}\partial_x^n \Pin_j P_k \int_{{t-2^{j-k}}}^t \!\!\!\!\!\!e^{-i(t-s)\Delta} F_{> 2^{j-10}}(|x|)   f ds\label{eqn:x-dx-Pin-bd-2}\\
        &\hspace{-4em}+\jBra{x}^{n\theta}\partial_x^n \Pin_j P_k \int_0^{{t-2^{j-k}}} \!\!e^{-i(t-s)\Delta} F_{> 2^{j-10}}(|x|)   f ds\label{eqn:x-dx-Pin-bd-3}
    \end{align}\end{subequations}
    We now estimate each of the pieces~\cref{eqn:x-dx-Pin-bd-1,eqn:x-dx-Pin-bd-2,eqn:x-dx-Pin-bd-3} and show that the result is square-summable in $k$. Note that
    \begin{equation}\label{eqn:x-dx-Pin-Pk-ident}
        \jBra{x}^{n\theta}\partial_x^n \Pin_j P_k = \jBra{x}^{n\theta}\Pin_j \partial_x^n P_k + \jBra{x}^{n\theta}[\partial_x^n, \Pin_j] P_k
    \end{equation}
    The commutator term will be lower order, since $[\partial_x, \Pin_j]$ has the same phase-space support properties at $\Pin_j$ but has size $2^{-j} \lesssim 2^k$.  Thus, we will focus our attention on the terms containing $\jBra{x}^{n\theta}\Pin_j \partial_x^n P_k$.  For~\eqref{eqn:x-dx-Pin-bd-1}, we re-express this leading order term as an integration against a kernel:
    \begin{equation*}
        \eqref{eqn:x-dx-Pin-bd-1} = \int_0^t \int K_{j,k}(x,y;t-s)  f(y,s)\;dydt + \{\textup{similar or better terms}\}
    \end{equation*}
    with 
    \begin{equation*}
        K_{j,k}(x,y;\tau) = \frac{\jBra{x}^{n\theta} F_{2^j}(x) F_{\leq  2^{j-10}}(|y|)}{2\pi} \int F_{2^k}(\xi)(i\xi)^n e^{i(\tau\xi^2+\xi(x-y))}\;d\xi + \{\textup{similar}\}
    \end{equation*}
    where $\{\textup{similar}\}$ denotes a term microlocalized to $-x \sim 2^j$, $-\xi \sim 2^k$.  Based on the microlocalization of $x,y$ and $\xi$, the derivative of the phase satisfies
    \begin{equation*}
        |\partial_\xi (\tau\xi^2+\xi(x-y))| = |2\tau \xi + (x-y)| \gtrsim \tau 2^k + 2^j > 0
    \end{equation*}
    so we can integrate by parts repeatedly to obtain
    \begin{equation*}\begin{split}
        |K_{j,k}| \lesssim_N& \frac{\jBra{x}^{n\theta} F_{2^j}(x) F_{\leq  2^{j-10}}(|y|)}{2\pi} \sum_{a+b=N} \int\frac{\tau^a}{|2\tau\xi + (x-y)|^{N+a}} 2^{(n-b)k} |F^{(b)}_{2^k}(|\xi|)|\;d\xi\\
        &+ \{\textup{similar}\}\\
        \lesssim_N& \frac{2^{n\theta j} F_{2^j}(x) F_{\leq  2^{j-10}}(|y|)}{2\pi} \sum_{a+b=N} \int\frac{\tau^a}{|2\tau\xi + (x-y)|^{N+a}} 2^{(n-b)k} |F^{(b)}_{2^k}(|\xi|)| \;d\xi\\
        &+ \{\textup{similar}\}\\
        \lesssim_N& F_{2^j}(x) F_{\leq  2^{j-10}}(|y|) 2^{n\theta j+ (n+1-N)k} \min\left(\frac{1}{2^{Nk}\tau^N}, 2^{-Nj}\right)
    \end{split}\end{equation*}
    Thus,
    \begin{equation*}
        \lVert\eqref{eqn:x-dx-Pin-bd-1}\rVert_{L^2}  \leq \lVert K_{j,k}(x,y;\tau)\rVert_{L^1_{\tau}(0,\infty; L^2_{x,y})} \lVert f \rVert_{L^\infty_t L^2_x}
    \end{equation*}
    Based on the previous calculation, we see that for $N \geq 2$
    \begin{equation*}
        \lVert K_{j,k}(x,y;\tau) \rVert_{L^2_{x,y}} \lesssim_N 2^{(n\theta+1)j + (n+1-N)k} \begin{cases}
            2^{-Nk} \tau^{-N}, & \tau > 2^{j-k}\\
            2^{-Nj},& 0 \leq \tau \leq 2^{j-k}
        \end{cases}
    \end{equation*}
    so
    \begin{equation*}\begin{split}
        \lVert\eqref{eqn:x-dx-Pin-bd-1}\rVert_{L^2} \lesssim& 2^{(n\theta+2-N)j} 2^{(n-N)k} \lVert f \rVert_{L^\infty_t L^2_x}
    \end{split}\end{equation*}
    Recalling that $k \geq -\theta j - 10$, we find that
    \begin{equation*}
        \sum_{k \geq -\theta j - 10} \lVert \eqref{eqn:x-dx-Pin-bd-1}\rVert_{L^2} \lesssim 2^{\theta (n + 2 - N)j} \lVert f \rVert_{L^2}
    \end{equation*}
    Taking $N = n+3$ and summing in $j$ gives the required bound for this term.  Turning to~\eqref{eqn:x-dx-Pin-bd-2}, we again commute the derivatives through the projection $\Pin_j$ to write
    \begin{subequations}\begin{align}
         \eqref{eqn:x-dx-Pin-bd-2} =& \jBra{x}^{n\theta} \Pin_j \partial_x^n P_k \int_{t-2^{j-k}}^t e^{-i(t-s)\Delta} F_{> 2^{j-10} }(|x|) f\;ds + \{\textup{better}\}\notag\\
         =& \jBra{x}^{n\theta} \Pin_j \partial_x P_k \int_{t-2^{j-k}}^t e^{-i(t-s)\Delta} F_{> 2^{j-10}}(|x|) \partial_x^{n-1} f\;ds\label{eqn:x-dx-Pin-bd-2-1}\\
         &+ \jBra{x}^{n\theta} \Pin_j \partial_x P_k \int_{t-2^{j-k}}^t e^{-i(t-s)\Delta} [\partial_x^{n-1}, F_{> 2^{j-10}}(|x|)] f\;ds\label{eqn:x-dx-Pin-bd-2-2}\\
        &+ \{\textup{better}\}\notag
    \end{align}\end{subequations}
    We now claim that
    \begin{equation*}
        \lVert \eqref{eqn:x-dx-Pin-bd-2-1} \rVert_{L^2} \lesssim 2^{(1 + \theta - \sigma)j} \lVert P_k \jBra{x}^{\sigma + (n-1)\theta} \partial_x^{n-1} f \rVert_{L^2}
    \end{equation*}
    and
    \begin{equation*}
        \lVert \eqref{eqn:x-dx-Pin-bd-2-2} \rVert_{L^2} \lesssim 2^{(1 + \theta -\sigma)j} \sum_{m=0}^{n-2}\lVert P_k \jBra{x}^{\sigma + m\theta} \partial_x^{m} f \rVert_{L^2}
    \end{equation*}
    For~\eqref{eqn:x-dx-Pin-bd-2-1}, we begin by dyadically decomposing the integrand in space:
    \begin{equation*}
        \eqref{eqn:x-dx-Pin-bd-2-1} = \sum_{\ell > j - 10} \jBra{x}^{n\theta} \Pin_j \partial_x P_k \int_{t-2^{j-k}}^t e^{-i(t-s)\Delta} F_{2^\ell} (x) \partial_x^{n-1} f\;ds
    \end{equation*}
    Using the fact that the Schr\"odinger evolution is unitary, we find that
    \begin{equation*}
        \lVert \eqref{eqn:x-dx-Pin-bd-2-1} \rVert_{L^2} \leq \sum_{\ell > j-10} 2^{(n\theta+1)j} \lVert P_k F_{2^\ell} (x) \partial_x^{n-1} f \rVert_{L^\infty_t L^2_x}
    \end{equation*}
    Now, writing that
    \begin{equation*}
        \lVert P_k F_{2^\ell} \partial_x^{n-1} f \rVert_{L^\infty_t L^2_x} \leq \lVert F_{\lesssim 2^\ell} P_k F_{2^\ell}\partial_x^{n-1} f \rVert_{L^\infty_t L^2_x} + \lVert F_{\gg 2^\ell} P_k F_{2^\ell} \partial_x^{n-1} f \rVert_{L^\infty_t L^2_x}
    \end{equation*}
    we see from~\Cref{lem:half-freq-proj-supp-lem} that
    \begin{equation*}
        \lVert F_{\gg 2^\ell} P_k F_{2^\ell} \partial_x^{n-1} f \rVert_{L^\infty_t L^2_x} \lesssim_N 2^{-N(\ell+k)} 2^{-((n-1)\theta + \sigma)\ell} \lVert \jBra{x}^{\sigma+(n-1)\theta} \partial_x^{n-1} f \rVert_{L^\infty_t L^2_x}
    \end{equation*}
    which can be summed for $N = 1$ (say).  For the other term, we have that
    \begin{equation*}\begin{split}
        \lVert F_{\lesssim 2^\ell} P_k F_{2^\ell}\partial_x^{n-1} f \rVert_{L^\infty_t L^2_x} \lesssim& 2^{-(\sigma + (n-1)\theta)\ell} \lVert F_{\lesssim 2^\ell} \jBra{x}^{\sigma + (n-1)\theta} P_k F_{2^\ell}\partial_x^{n-1} f \rVert_{L^\infty_t L^2_x}\\
        \lesssim& 2^{-(\sigma + (n-1)\theta)\ell} \lVert P_k \jBra{x}^{\sigma + (n-1)\theta} F_{2^\ell}\partial_x^{n-1} f \rVert_{L^\infty_t L^2_x}\\
        &+ 2^{-(\sigma + (n-1)\theta)\ell} \lVert F_{\lesssim 2^\ell} [\jBra{x}^{\sigma + (n-1)\theta}, P_k] F_{2^\ell}\partial_x^{n-1} f \rVert_{L^\infty_t L^2_x}
    \end{split}\end{equation*}
    Using~\Cref{cor:comm-cor} with $\rho = 1 < \alpha = \sigma + (n-1)\theta$, we see that the commutator term is bounded by
    \begin{equation*}\begin{split}
        2^{-(\sigma + (n-1)\theta)\ell} \lVert F_{\lesssim 2^\ell} [\jBra{x}^{\sigma + (n-1)\theta}, P_k] F_{2^\ell}\partial_x^{n-1} f \rVert_{L^\infty_t L^2_x} \lesssim\hspace{-1in}&\hspace{1in} 2^{-\ell} 2^{-k} \lVert F_{2^\ell}\partial_x^{n-1} f \rVert_{L^\infty_t L^2_x}\\
        \lesssim& 2^{-(1 + \sigma + (n-1)\theta)\ell} 2^{-k} \lVert \jBra{x}^{\sigma + (n-1)\theta} \partial_x^{n-1} f \rVert_{L^\infty_t L^2_x}
    \end{split}\end{equation*}
    Thus, summing in $\ell$ gives that
    \begin{equation*}\begin{split}
        \lVert \eqref{eqn:x-dx-Pin-bd-2-1} \rVert_{L^2} \lesssim& 2^{-(\sigma - 1 - \theta)j} \lVert P_k \jBra{x}^{\sigma + (n-1)\theta} \partial_x^{n-1} f \rVert_{L^\infty_t L^2_x}\\
        &+ 2^{-k} 2^{-(\sigma -\theta)j} \lVert \jBra{x}^{\sigma + (n-1)\theta} \partial_x^{n-1} f \rVert_{L^\infty_t L^2_x}\\
        \lesssim& 2^{-(\sigma - 1 - \theta)j} \min(1, 2^{-s'k}) \lVert \jBra{x}^{\sigma + (n-1)\theta} \partial_x^{n-1} f \rVert_{L^\infty_t H^{s'}_x}\\
        &+ 2^{-k} 2^{-(\sigma -\theta)j} \lVert \jBra{x}^{\sigma + (n-1)\theta} \partial_x^{n-1} f \rVert_{L^\infty_t L^2_x}
    \end{split}\end{equation*}
    Summing this over $j > 0$, $k \geq -\theta j - 10$ gives
    \begin{equation*}
        \sum_{j = 1}^\infty \sum_{k \geq -\theta j - 10} \lVert \eqref{eqn:x-dx-Pin-bd-2-1} \rVert_{L^2} \lesssim \lVert \jBra{x}^\sigma |x|^{(n-1)\theta} \partial_x^{n-1} f \lVert_{L^\infty_t H^s_x}
    \end{equation*}
    as required.  To handle the term~\eqref{eqn:x-dx-Pin-bd-2-2}, we simply observe that the commutator term can be rewritten as
    $$[\partial_x^{n-1}, F_{> 2^{j-10}}(|x|)] = \sum_{m=0}^{n-2}(\sgn(x))^{n-1-m} 2^{-(n-1-m)j} F_{> 2^{j-10}}^{(n-1-m)}(|x|) \partial_x^{m}$$
    and each individual term can be estimated as above.
    
    Finally, for~\eqref{eqn:x-dx-Pin-bd-3}, we decompose the integral as
    \begin{subequations}\begin{align}
        \eqref{eqn:x-dx-Pin-bd-3} =& \jBra{x}^{n\theta} \Pin_j \partial_x P_k \int_0^{t-2^{j-k}} e^{-i(t-s)\Delta} F_{> (t-s) 2^{k-10}}(|x|) \partial_x^{n-1} f\;ds\label{eqn:x-dx-Pin-bd-3-1}\\
        &+ \jBra{x}^{n\theta} \Pin_j \partial_x P_k \int_0^{{t-2^{j-k}}} e^{-i(t-s)\Delta} F_{\leq (t-s) 2^{k-10}}(|x|) F_{> 2^{j-10}} \partial_x^{n-1} f\;ds\label{eqn:x-dx-Pin-bd-3-2}\\
        &+ \{\textup{similar or better}\}\notag
    \end{align}\end{subequations}
    Repeating the dyadic decomposition argument we used to control~\eqref{eqn:x-dx-Pin-bd-2-1}, we find that
    \begin{equation*}\begin{split}
        \lVert \eqref{eqn:x-dx-Pin-bd-3-1} \rVert_{L^2} \lesssim& 2^{n\theta j} 2^k \int_0^{{t-2^{j-k}}} \frac{\lVert P_k\jBra{x}^{\sigma + (n-1)\theta} \partial_x^{n-1} f \rVert_{L^\infty_t L^2_x}}{(t-s)^{\sigma +(n-1)\theta} 2^{(\sigma + (n-1)\theta)k}}\;ds\\
        &\hspace{-2em}+ 2^{(1/2 +(n-1/2)\theta) j} 2^{(1/2+\theta/2)k} \int_0^{{t-2^{j-k}}} \frac{\lVert \jBra{x}^{\sigma + (n-1)\theta} \partial_x^{n-1} f \rVert_{L^\infty_t L^2_x}}{(t-s)^{(\sigma +(n-1)\theta)} 2^{(\sigma + (n-1)\theta)k}}\;ds\\
        \lesssim&  2^{(1+\theta - \sigma)j}\lVert P_k\jBra{x}^{\sigma + (n-1)\theta} \partial_x^{n-1} f \rVert_{L^\infty_t L^2_x}\\
        &+ 2^{(3/2+\theta/2 - \sigma)j} 2^{-(1-\theta)/2 k} \lVert \jBra{x}^{\sigma + (n-1)\theta} \partial_x^{n-1} f \rVert_{L^\infty_t L^2_x}\\
    \end{split}\end{equation*}
    which can be summed in $j$ and $k$ to get the required bound.  Turning to~\eqref{eqn:x-dx-Pin-bd-3-2}, we observe that by~\Cref{lem:proj-fast-decay-2} and duality, we have the bound
    \begin{equation*}\begin{split}
        \lVert \eqref{eqn:x-dx-Pin-bd-3-2} \rVert_{L^2} \lesssim& 2^{n\theta j} \int_0^{t-2^{j-k}} 2^{j/2}\frac{2^{(5/2 - 2N)k}}{(t-s)^{N-1/2}} \lVert F_{> 2^{j-10}(|x|)} \partial_x^{n-1} f \rVert_{L^\infty_t L^2_x}\;ds\\
        \lesssim_N& 2^{(2+\theta - \sigma - N)j} 2^{(1-N)k} \lVert\jBra{x}^{\sigma + (n-1)\theta} \partial_x^{n-1} f \rVert_{L^\infty_t L^2_x}
    \end{split}\end{equation*}
    which also gives the required bounds after summing, provided we take $N$ large enough.
\end{proof}

The projectors $\Pin_j$ and $\Pout_j$ also have good summability properties:

\begin{lemma}\label{lem:in-out-sum-bdds}
The bounds
\begin{equation}\label{eqn:Pin-sum-CS}
    \left\lVert \Pin_{\geq J} + \sum_{j=0}^{J-1} \Pin_j \right\rVert_{H^1 \to H^1} \lesssim C
\end{equation}
and
\begin{equation}\label{eqn:Pout-sum-CS}
    \left\lVert \Pout_{\geq J} + \sum_{j=0}^{J-1} \Pout_j \right\rVert_{H^1 \to H^1} \lesssim C
\end{equation}
hold with a constant $C$ independent of $J \geq 0$. 
\end{lemma}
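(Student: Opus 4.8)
The plan is to establish the $L^2\to L^2$ bound by the Cotlar--Stein almost-orthogonality lemma and then upgrade to $H^1$ by commuting a derivative through; since $\Pout$ is obtained from $\Pin$ by exchanging $D\leftrightarrow -D$ in the frequency cutoffs, it suffices to treat $\Pin$. First I reduce to a one-sided sum. Let $\mathcal P f(x):=f(-x)$; this is an isometry of $H^1$ that conjugates a Fourier multiplier $b(D)$ to $b(-D)$ and multiplication by $a(x)$ to multiplication by $a(-x)$. Reading off~\eqref{eqn:in-out-proj}, every $\Pin_j$ — including $j=0$ and the tail $\Pin_{\geq J}$, using $F_{\geq1}(|D|)=F_{\geq1}(D)+F_{\geq1}(-D)$ for the former — splits as $T_j+\mathcal P T_j\mathcal P^{-1}$, where $T_j=\chi_j(x)\psi_j(D)$ with
\[
\chi_0=F_{\leq1}(|x|),\qquad \chi_j=F_{2^j}(x)\ \ (1\le j\le J-1),\qquad \chi_J=F_{\geq 2^J}(x),
\]
and $\psi_j=F_{\geq 2^{-\theta j}}$ (so $\psi_0=F_{\geq1}$). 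Hence $\Pin_{\geq J}+\sum_{j=0}^{J-1}\Pin_j=S_J+\mathcal P S_J\mathcal P^{-1}$ with $S_J:=\sum_{j=0}^{J}T_j$, and since $\|\mathcal P S_J\mathcal P^{-1}\|_{H^1\to H^1}=\|S_J\|_{H^1\to H^1}$ it is enough to bound $\|S_J\|_{H^1\to H^1}$ uniformly in $J$.

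For the $L^2$ bound I apply Cotlar--Stein to $\{T_j\}_{j=0}^{J}$. The spatial cutoffs $\chi_j,\chi_{j'}$ are disjointly supported once $|j-j'|\ge 3$, so $T_j^*T_{j'}=\psi_j(D)\chi_j\chi_{j'}\psi_{j'}(D)=0$ in that range, while $\|T_j^*T_{j'}\|_{L^2\to L^2}\le 1$ always. For the other product $T_jT_{j'}^*=\chi_j(x)\,(\psi_j\psi_{j'})(D)\,\chi_{j'}(x)$ one checks that $\psi_j\psi_{j'}=F_{\geq 2^{-\theta\min(j,j')}}$ exactly whenever $|j-j'|\ge 1/\theta$, so in that regime $T_jT_{j'}^*$ equals $\chi_j(x)F_{\geq 2^{-\theta\min(j,j')}}(D)\chi_{j'}(x)$ with $\supp\chi_j$ and $\supp\chi_{j'}$ separated by distance $\sim 2^{\max(j,j')}$; the integration-by-parts argument in the proof of \Cref{lem:half-freq-proj-supp-lem} then gives, for every $N$,
\[
\|T_jT_{j'}^*\|_{L^2\to L^2}\lesssim_{N,\theta}2^{-N\left(\max(j,j')-\theta\min(j,j')\right)}\lesssim_{N,\theta}2^{-N|j-j'|},
\]
the last step using $\max-\theta\min\ge \max-\min=|j-j'|$ because $\theta<1$. (When the cutoff $\chi_J$ of infinite measure occurs one applies the estimate to the adjoint so the left-hand cutoff has finite measure, which is all the kernel bound in that proof requires; and the finitely many pairs with $|j-j'|<1/\theta$ are handled by the trivial bound $\|T_jT_{j'}^*\|\le 1$.) These two families of estimates satisfy the Cotlar--Stein hypotheses with a summable majorant independent of $J$, so $\|S_J\|_{L^2\to L^2}\lesssim_\theta 1$ uniformly in $J$.

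To pass to $H^1$, note $\psi_j(D)$ commutes with $\partial_x$, so
\[
\partial_x S_J f=S_J(\partial_x f)+\sum_{j=0}^{J}\chi_j'(x)\,\psi_j(D)f.
\]
The first term has $L^2$ norm $\lesssim_\theta\|\partial_x f\|_{L^2}$ by the previous paragraph, and since $\|\chi_j'\|_{L^\infty}\lesssim 2^{-j}$ for $j\ge1$ and $\|\chi_0'\|_{L^\infty}\lesssim 1$, the second term has $L^2$ norm $\le\big(\sum_{j=0}^{J}\|\chi_j'\|_{L^\infty}\big)\|f\|_{L^2}\lesssim\|f\|_{L^2}$, uniformly in $J$. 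Hence $\|S_J\|_{H^1\to H^1}\lesssim_\theta 1$ uniformly, which together with the first paragraph gives~\eqref{eqn:Pin-sum-CS}; the proof of~\eqref{eqn:Pout-sum-CS} is verbatim the same with $D$ and $-D$ exchanged.

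The only non-routine ingredient is the almost-orthogonality bound on $\|T_jT_{j'}^*\|$, and the point there is the inequality $\max(j,j')-\theta\min(j,j')\ge|j-j'|$, i.e.\ that $\theta$ is \emph{strictly} below $1$: this ensures the frequency localization $F_{\geq 2^{-\theta\min(j,j')}}$ still dominates the physical-space separation $2^{\max(j,j')}$ and thereby produces genuine (indeed super-polynomial) decay in $|j-j'|$. At $\theta=1$ this margin degenerates and the argument would yield only an $O(\log J)$ bound; everything else (the parity reduction, the half-line tail cutoff $\chi_J$, the $H^1$ commutator) is bookkeeping.
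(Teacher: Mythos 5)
Your proof is correct, but it takes a genuinely different route from the paper's. The paper exploits the finite overlap of the spatial cutoffs $F_{2^j}(\pm x)$ to reduce matters to the square-function bound $\sum_j \lVert \Pin_j \phi \rVert_{H^1}^2 \lesssim \lVert\phi\rVert_{H^1}^2$, and then proves this directly by decomposing $\phi$ into spatial shells $F_{\sim 2^j}\phi$, $F_{\gg 2^j}\phi$, $F_{\ll 2^j}\phi$: the comparable-scale piece is trivial, the far piece is handled by the separated-support estimate of \Cref{lem:half-freq-proj-supp-lem}, and the near-origin piece requires an additional frequency splitting together with Bernstein's inequality. You instead first perform a parity reduction to the one-sided sum $S_J=\sum_j T_j$ and then apply Cotlar--Stein: the disjointness of the $\chi_j$ yields $T_j^*T_{j'}=0$ for $|j-j'|$ large, while $T_jT_{j'}^*$ is controlled by the same kernel estimate of \Cref{lem:half-freq-proj-supp-lem} using the gap $\max(j,j')-\theta\min(j,j')\ge|j-j'|$ (which is exactly where $\theta<1$ enters). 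The inputs are therefore the same, but Cotlar--Stein lets you sidestep the decomposition of $\phi$ and in particular the Bernstein step in the near-origin regime, at the cost of invoking the abstract almost-orthogonality lemma. Your caveat about the infinite-measure cutoff $\chi_J=F_{\geq 2^J}(x)$ is the right thing to flag; note that one can dispense with the Hilbert--Schmidt bound entirely and use Schur's test on the kernel of $T_jT_{J}^*$, which requires no finiteness of the support of either factor, only the decay of the convolution kernel on $\{|x-y|\gtrsim 2^{J}\}$. The passage from $L^2$ to $H^1$ via the commutator $\sum_j\chi_j'\psi_j(D)$ is standard and correct. Overall this is a clean, self-contained alternative proof; the paper's version is arguably more elementary in that it avoids Cotlar--Stein, while yours is more systematic and makes the role of $\theta<1$ more transparent.
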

\begin{proof}
We will prove the bound~\eqref{eqn:Pin-sum-CS}: the bound for the outgoing projectors follows similarly.  The functions $F_{2^j}(\pm x)$ and $F_{2^{j'}}(\pm x)$ have disjoint supports for $|j - j'| \gg 1$, so we have that
\begin{equation*}
    \left\lVert \left(\Pin_{\geq J} + \sum_{j=0}^{J-1} \Pin_j\right) \phi\right\rVert_{H^1}^2 \lesssim \left\lVert \Pin_{\geq J} \phi\right\rVert_{H^1}^2 + \sum_{j = 0}^{J-1} \left\lVert \Pin_j \phi\right\rVert_{H^1}^2
\end{equation*}
Since $\lVert \Pin_{\geq J} \rVert_{H^1 \to H^1}$ is bounded independent of $J$, the result will follow once we show that
\begin{equation*}
    \left\lVert \Pin_j \phi\right\rVert_{\ell^2_j H^1_x} \lesssim \lVert \phi \rVert_{H^1}
\end{equation*}
For fixed $j$, we have that
\begin{equation*}\begin{split}
    \Pin_j \phi =& \Pin_j F_{\sim 2^j}(|x|) \phi +  \Pin_j F_{> 2^{j+10}}(|x|) \phi + \Pin_j F_{\ll 2^j}(|x|) \phi\\
    =:& \Pin_{j,\sim} \phi + \Pin_{j, \gg} \phi + \Pin_{j, \ll} \phi
\end{split}\end{equation*}
For the $\Pin_{j,\sim} \phi $, we see at once that
\begin{equation*}
    \lVert \Pin_{j,\sim} \phi \rVert_{\ell^2_j H^1} \lesssim \lVert F_{\sim 2^j}(|x|) \phi \rVert_{\ell^2_j H^1} \lesssim \lVert \phi \rVert_{H^1}
\end{equation*}
For $\Pin_{j,\gg} \phi$, we note that
\begin{equation*}\begin{split}
    \Pin_{j,\gg} =& F_{2^j}(x)F_{\geq 2^{-\theta j}}(D) F_{\gg 2^j}(|x|) + \{\text{similar}\}\\
                =& F_{2^j}(x) F_{\geq 2^{-\theta j}}(D) \sum_{m > j - 10} F_{2^m}(|x|)
\end{split}\end{equation*}
The supports of $F_{2^j}(\pm x)$ and $F_{2^m}(|x|)$ are separated by a distance $\sim 2^m$, so by~\Cref{lem:half-freq-proj-supp-lem} and Young's inequality,
\begin{equation*}\begin{split}
    \lVert \Pin_{j,\gg} \phi \rVert_{\ell^2_j H^1_x} \lesssim& \left\lVert \sum_{m > j + 10} 2^{j-m} \lVert F_{2^m}(|x|) \phi \rVert_{H^1} \right\rVert_{\ell^2_j}\\
    \lesssim& \lVert F_{2^j}(|x|)(x) \phi \rVert_{\ell^2_j H^1_x}\\
    \lesssim& \lVert \phi \rVert_{H^1}
\end{split}\end{equation*}
Finally, for $\Pin_{j,\ll} \phi $, we further divide in frequency by writing
\begin{equation*}\begin{split}
    \Pin_{j,\ll} \phi  =&\Pin_j \sum_{m < j - 10}  F_{2^m}(x) \phi\\
    =&\Pin_j \sum_{m < j - 10} F_{\leq 2^{-\frac{m+2j}{3}}} (|D|)  F_{2^m}(|x|) \phi\\
    &+ \Pin_j  \sum_{m < j - 10}  F_{> 2^{-\frac{m+2j}{3}}} (|D|) F_{2^m}(|x|) \phi 
\end{split}\end{equation*}
Now, by Bernstein's inequality,
\begin{equation*}
    \lVert F_{2^m}(|x|) F_{\leq 2^{-\frac{m+2j}{3}}} (|D|) \rVert_{L^2 \to L^2} \lesssim 2^{\frac{m}{2}} 2^{-\frac{m+2j}{6}} = 2^{\frac{m-j}{3}}
\end{equation*}
so, by duality,
\begin{equation*}
    \lVert \Pin_j F_{\leq 2^{-\frac{m+2j}{3}}} (|D|)  F_{2^m}(x) \phi \rVert_{H^1} \lesssim 2^{\frac{m-j}{3}} \lVert F_{2^m}(|x|) \phi \rVert_{H^1}
\end{equation*}
On the other hand, a direct application of~\Cref{lem:half-freq-proj-supp-lem} with $N=1$ shows that
\begin{equation*}
    \lVert \Pin_j  F_{> 2^{-\frac{m+2j}{3}}} (|D|)  F_{2^m}(|x|) \phi \rVert_{H^1} \lesssim 2^{\frac{m-j}{3}} \lVert F_{2^m}(|x|) \phi \rVert_{H^1} 
\end{equation*}
Thus, by Young's inequality,
\begin{equation*}\begin{split}
    \lVert \Pin_{j,\ll} \phi \rVert_{\ell^2_j H^1_x} \lesssim& \left\lVert \sum_{m < j - 10} 2^{\frac{m-j}{3}} \lVert F_{2^m}(|x|) \phi \rVert_{H^1} \right\rVert_{\ell^2_j}\\
    \lesssim& \lVert \phi \rVert_{H^1}\qedhere
\end{split}
\end{equation*}
\end{proof}

\subsection{Decomposition of the weakly bound state}

We now rewrite $\uwb$ in a way that will allow us to prove~\eqref{eqn:uwb-decomp}.  The definitions are somewhat involved, so we will first introduce the decomposition, and then prove that it has the required properties in the following subsections.

Recall that 
\begin{equation*}\begin{split}
    \uwb(t) =& F_{\leq t^\delta}(|D|)e^{-it\Delta} F_{\leq t^\alpha}(|x|)e^{it\Delta} u(t) + e^{-it\Delta}F_{\geq t^\alpha}(|x|)e^{it\Delta}u(t)\\
            =:& \ulow + v
\end{split}\end{equation*}
From the definition of $\ulow$, we immediately see that
\begin{equation*}
    \lVert \partial_x \ulow \rVert_{L^2} \lesssim t^{-\delta} \lVert u \rVert_{L^2}
\end{equation*}
so $\ulow = o_{\dot{H}^1}(1)$.  Turning to $v$, the usual Duhamel representation for $u$ gives
\begin{equation}\label{eqn:v-forward-Duhamel}\begin{split}
    v(x,t) =& e^{-it\Delta} F_{\geq t^\alpha}(|x|) u_0 + i\int_0^t e^{-it\Delta} F_{\geq t^\alpha}(|x|) e^{is\Delta} V(x,s) u(x,s)\;ds
\end{split}\end{equation}
Since $\wlim_{t \to \infty} e^{it\Delta} v(t) = \wlim_{t \to \infty} F_{\geq t^\alpha}(|x|) e^{it\Delta} u(t)  = 0$, we also have a backward-in-time Duhamel representation
\begin{equation}\label{eqn:v-backward-Duhamel}\begin{split}
    v(x,t) =& -i\int_t^\infty e^{it\Delta} F_{\geq s^\alpha}(|x|) e^{-is\Delta} V(x,s) u(x,s)\;ds\\
    &+\int_t^\infty e^{-it\Delta} \frac{\alpha|x|}{s^{1+\alpha}}F'_{\geq s^\alpha}(|x|) e^{-is\Delta} u(x,s)\;ds 
\end{split}\end{equation}
Now, let $J(t)$ be such that $2^{J(t)-1} < t^{\rho} \leq 2^{J(t)}$ with $\alpha < \rho < 1/2$, and define
\begin{equation}\label{eqn:u-loc-def}\begin{split}
    \uloc(t) := \sum_{j=0}^{J(t)} \uloc^{(j)}(t)
\end{split}\end{equation}
with
\begin{equation}\begin{split}
    \uloc^{(j)}(t) =& \Plow_j v(t) +i \Pin_j \int_0^t e^{-i(t-s)\Delta} V(x,s) u(x,s)\;ds\\
                & -i \Pout_j \int_t^\infty e^{-i(t-s)\Delta} V(x,s) u(x,s)\;ds\\
                =:& \uloc^{(j,\textup{low})}(t) + \uloc^{(j,\textup{fwd})}(t) + \uloc^{(j,\textup{bwd})}(t)
\end{split}\end{equation}
and
\begin{equation}
    \urem := \urem^{(\textup{nr})}(t) + \sum_{j=0}^{J(t)} \urem^{(j)}(t) + \urem^{(\geq J(t))}(t)
\end{equation}
where $\urem^{(\textup{nr})}(t)$ contains terms which decay to $0$ in $\dot{H}^1$ without a rate:
\begin{equation*}\begin{split}
    \urem^{(\textup{nr})}(t) =& \left(\Pin_{\geq J(t)} + \sum_{j=0}^{J(t)} \Pin_j\right) e^{it\Delta} F_{\geq t^\alpha}(x) u_0\\
    &+ \left(\Pout_{\geq J(t)} + \sum_{j=0}^{J(t)} \Pout_j\right) \int_t^\infty e^{it\Delta} \frac{\alpha|x|}{s^{1+\alpha}}F'_{\geq s^\alpha}(x) e^{-is\Delta} u(x,s)\;ds\\
    =:& \urem^{(\textup{nr},1)}(t) + \urem^{(\textup{nr},2)}(t)
\end{split}\end{equation*}
The term $\urem^{(j)}$ contains the terms localized to $|x| \sim 2^j$:
\begin{equation*}\begin{split}
    \urem^{(j)} =& i \Pin_j \int_0^t e^{-it\Delta} F_{\leq t^\alpha}(|x|) e^{is\Delta} V(x,s) u(x,s)\;ds\\
    &- i \Pout_j \int_t^\infty e^{-it\Delta} F_{\leq s^\alpha}(|x|) e^{is\Delta} V(x,s) u(x,s)\;ds\\
    =:& \urem^{(j,\textup{fwd})}(t) + \urem^{(j,\textup{fwd})}(t)
\end{split}\end{equation*}
And $\urem^{(\geq J(t))}$ contains terms localized to $|x| \geq 2^{J(t)}$:
\begin{equation}\label{eqn:urem-ge-J-def}\begin{split}
    \urem^{(\geq J(t))}(t) =& \Plow_{\geq J(t)} v\\
    &+i \Pin_{\geq J(t)} \int_0^t e^{-i(t-s)\Delta} V(x,s) u(x,s)\;ds\\
    &-i \Pin_{\geq J(t)} \int_0^t e^{-it\Delta} F_{\leq t^\alpha}(|x|) e^{is\Delta} V(x,s) u(x,s)\;ds\\
    &- i \Pout_{_\geq J(t)} \int_t^\infty e^{-i(t-s)\Delta} V(x,s) u(x,s)\;ds\\
    &+ i \Pout_{_\geq J(t)} \int_t^\infty e^{-it\Delta} F_{\leq s^\alpha}(|x|) e^{is\Delta} V(x,s) u(x,s)\;ds\\
    =:& \urem^{(\geq J(t),\textup{low})}(t) +  \urem^{(\geq J(t),\textup{fwd},1)}(t) + \urem^{(\geq J(t),\textup{fwd},2)}(t)\\& + \urem^{(\geq J(t),\textup{bwd},1)}(t) + \urem^{(\geq J(t),\textup{bwd},2)}(t)
\end{split}\end{equation}
By~\eqref{eqn:v-forward-Duhamel} and~\eqref{eqn:v-backward-Duhamel}, we see that $\uloc + \urem = v$.  In the next two subsections, we will show that 
\begin{equation}\label{eqn:u-rem-bd}
    \lim_{t \to \infty} \lVert \partial_x \urem(t) \rVert_{L^2} = 0
\end{equation} 
and 
\begin{equation}\label{eqn:u-loc-bd}
    \sup_t \lVert \jBra{x}^{\theta} \partial_x \uloc^{(j)}(t) \rVert_{L^2} < \infty
\end{equation}
which together imply~\eqref{eqn:uloc-desired}.

\subsection{Bounds for \texorpdfstring{$\urem$}{u\_rem}}
We first prove the bound~\eqref{eqn:u-rem-bd} for the remainder term.  From the definition of $\urem^{(\textup{nr},1)}$,~\Cref{lem:in-out-sum-bdds} allows us to conclude that
\begin{equation*}
    \lim_{t \to \infty} \lVert \partial_x\urem^{(\textup{nr},1)} \rVert_{L^2} \lesssim  \lim_{t \to \infty} \lVert F_{\geq t^\alpha}(|x|) u_0 \rVert_{H^1} = 0
\end{equation*}
To control $\partial_x \urem^{(\textup{nr},2)}(t)$, we distribute the derivative and write
\begin{subequations}\begin{align}
    \partial_x \urem^{(\textup{nr},2)}(t) 
    =& \left(\Pout_{\geq J(t)} + \sum_{j=0}^{J(t)} \Pout_j\right)\int_t^\infty e^{-it\Delta} \partial_s\partial_x F_{\geq s^\alpha}(|x|) e^{is\Delta} u(x,s)\;ds\label{eqn:urem-nr-2-space-loc}\\
    &+ \left(\Pout_{\geq J(t)} + \sum_{j=0}^{J(t)} \Pout_j \right)\int_t^\infty e^{-it\Delta} \partial_sF_{\geq s^\alpha}(|x|) e^{is\Delta} \partial_x u(x,s)\;ds\label{eqn:urem-nr-2-u}\\
    &+ \left[\partial_x, \Pout_{\geq J(t)} + \sum_{j=0}^{J(t)} \Pout_j \right]\int_t^\infty e^{-it\Delta} \partial_s F_{\geq s^\alpha}(|x|) e^{is\Delta} u(x,s)\;ds\label{eqn:urem-nr-2-symbol-comm}
\end{align}\end{subequations}
For~\eqref{eqn:urem-nr-2-space-loc}, we observe that
\begin{equation*}
    \lVert \partial_s \partial_x F_{\geq s^\alpha}(|x|) \rVert_{L^\infty} \lesssim s^{-1-\alpha}
\end{equation*}
so
\begin{equation*}\begin{split}
    \lVert \eqref{eqn:urem-nr-2-space-loc} \rVert_{L^2} \lesssim& \int_t^\infty s^{-1-\alpha} \lVert u(s) \rVert_{L^2}\;ds\\
    \lesssim& t^{-\alpha} \lVert u \rVert_{L^\infty_t H^1_x}\\
    =& o_{L^2_x}(1)
\end{split}
\end{equation*}
To control~\eqref{eqn:urem-nr-2-u}, we perform a propagation estimate for the observable
\begin{equation*}
    B(s) = \partial_x F_{\geq s^\alpha}(|x|) \partial_x
\end{equation*}
against the function
\begin{equation*}
    \psi(s) = e^{is\Delta} u(s)
\end{equation*}
Arguing as in the proof of~\Cref{lem:pres-wave-op}, we compute that the Heisenberg derivative of $B(s)$ is the sum of a positive operator and integrable perturbation:
\begin{equation*}\begin{split}
    \partial_s \langle \tilde{B}(s) \rangle_{s,\psi} =& \left\langle \partial_x \partial_s F_{\geq s^\alpha}(|x|) \partial_x \right\rangle_{s,\psi}\\
    &-i \left\langle [e^{is\Delta}Ve^{-is\Delta}, -\partial_x \partial_s F_{\geq s^\alpha}(|x|) \partial_x, ] \right\rangle_{s,\psi}\\
    =& \left\langle \partial_x \partial_s F_{\geq s^\alpha}(|x|) \partial_x \right\rangle_{s,\psi} + O_{L^1_s}(\lVert u \rVert_{L^\infty_sH^1_x}^2)
\end{split}\end{equation*}\
Hence, we conclude that
\begin{equation*}
    \left\lVert \sqrt{\partial_s F_{\geq s^\alpha}(|x|)} \partial_x e^{is\Delta} u(s) \right\rVert_{L^2_{s,x}} \lesssim \lVert u \rVert_{L^\infty_sH^1_x}
\end{equation*}
By~\Cref{lem:in-out-sum-bdds} and duality, we then have that
\begin{equation*}\begin{split}
    \lVert \eqref{eqn:urem-nr-2-u} \rVert_{L^2} \lesssim& \left\lVert \int_t^\infty \partial_s F_{\geq s^\alpha}(|x|) e^{is\Delta} u(s)\;ds\right\rVert_{L^2}\\
    \lesssim& \sup_{\lVert \phi \rVert_{L^2} = 1} \left\lVert \sqrt{\partial_s F_{\geq s^\alpha}(|x|)} \phi \right\rVert_{L^2_{s,x}} \left\lVert \sqrt{\partial_s F_{\geq s^\alpha}(|x|)} e^{is\Delta} u \right\rVert_{L^2(t,\infty; L^2x)}\\
    =& o_{t\to\infty}(1)
\end{split}\end{equation*}
For~\eqref{eqn:urem-nr-2-symbol-comm}, we see that the commutator term has essentially the same form as the right-hand side of~\eqref{eqn:Pout-sum-CS}, but with better decay in $j$.  Thus, we can obtain the bounds for~\eqref{eqn:urem-nr-2-symbol-comm} by defining a propagation observable
$$B(s) = F_{\geq s^\alpha}(|x|)$$
and modifying the argument for~\eqref{eqn:urem-nr-2-u}.

Next, we show that $\urem^{(\geq J(t))}(t)$ goes to $0$ in $\dot{H}^1$.  Recalling the decomposition~\eqref{eqn:urem-ge-J-def}, we have that
\begin{equation*}\begin{split}
    \lVert \partial_x \urem^{(\geq J(t),\textup{low})}(t) \rVert_{L^2} \lesssim&  \lVert [\partial_x ,F_{\geq 2^{J(t)}}(|x|)] \rVert_{L^2 \to L^2} \lVert u(t) \rVert_{L^2} + \lVert \partial_x F_{\leq 2^{-\theta J(t)}}(|D|) u(t) \rVert_{L^2}\\
    \lesssim& 2^{-\theta J(t)}\lVert u(t) \rVert_{L^2}\\
\end{split}\end{equation*}
which decays to $0$ as required.  

We now show how to bound the forward-in-time Duhamel terms $\urem^{(\geq J(t), \textup{fwd}, 1)}$ and $\urem^{(\geq J(t), \textup{fwd}, 2)}$ (the backward-in-time terms are similar).  For $\urem^{(\geq J(t), \textup{fwd},2)}(t)$, we expand the definition of~$\Pin_{\geq J(t)}$ to write
\begin{align}
    \partial_x \urem^{(\geq J(t), \textup{fwd},2)}(t) =& -i F_{\geq 2^{J(t)}}(x) \int_0^t e^{-it\Delta} F_{\geq 2^{-\theta J(t)}}(D) F_{\leq t^\alpha}(|x|) e^{is\Delta} \partial_x(V u(s))\;ds\label{eqn:urem-ge-J-fwd-2-1}\\
    &+ \{\textup{similar or easier}\}\notag
\end{align}
Define
\begin{equation*}
    K_t(x,y) = \frac{F_{\geq 2^{J(t)}}(x) F_{\leq t^\alpha}(|y|)}{2\pi} \int e^{it\xi^2 +\xi(x-y)} F_{\geq 2^{-\theta J(t)}}(\xi)\;d\xi
\end{equation*}
to be the integral kernel associated with $F_{\geq J(t)}(x) e^{-it\Delta} F(-D \geq 2^{-\theta J(t)}) F_{\leq t^\alpha}(|x|)$.  Since $\rho > \alpha$, we have that $x-y \gtrsim t^{\rho}$ on the support of $K_t$.  In particular, this implies that the phase of the exponential is nonstationary, so we can repeatedly integrate by parts to obtain that
\begin{equation*}\begin{split}
    |K_t(x,y)| =& \left|\sum_{a+b = N} C_{a,b} \frac{F_{\geq 2^{J(t)}}(x) F_{\leq t^\alpha}(|y|)}{2\pi} \int e^{it\xi^2 +\xi(x-y)} \frac{t^a 2^{b \theta J(t)} F_{\geq 2^{-\theta J(t)}}^{(b)}(\xi)}{[2t\xi + (x-y)]^{N+a}}\;d\xi\right|\\
    \lesssim_N& F_{\geq 2^{J(t)}}(x) F_{\leq t^\alpha}(|y|) \frac{t^{(N-1)\rho\theta}}{[t^{1-\rho\theta } +(x-y)]^{N-1}}
\end{split}\end{equation*}
Thus, Schur's test gives us the estimate
\begin{equation*}
    \lVert F_{\geq 2^{J(t)}}(x) e^{-it\Delta} F_{\geq 2^{-J(t)}}(D) F_{\leq t^\alpha}(|x|) \rVert_{L^2 \to L^2} \lesssim_N t^{\rho\theta} t^{-(1-2\rho\theta)(N-2)}
\end{equation*}
Choosing $N$ sufficiently large and recalling that $\rho\theta < 1/2$, we conclude that $\urem^{(\geq J(t), \textup{fwd}, 2)}$ vanishes in $\dot{H}^1$ as $t \to \infty$.  Turning to $\urem^{(\geq J(t), \textup{fwd}, 1)}$, we see that
\begin{equation*}\begin{split}
    \lVert \partial_x \urem^{(\geq J(t), \textup{fwd}, 1)} \rVert_{L^2} \lesssim&\left\lVert \int_0^t F_{\geq 2^{J(t)}}(x) F_{\geq 2^{-\theta J(t)}}(D) e^{-i(t-s)\Delta} F_{\leq t^\alpha}(|x|) \partial_x(Vu(s))\;ds\right\rVert_{L^2}\\
    &+ \left\lVert \int_0^t F_{\geq 2^{J(t)}}(x) F_{\geq 2^{-\theta J(t)}}(D) e^{-i(t-s)\Delta} F_{\geq t^\alpha}(|x|) \partial_x(Vu(s))\;ds\right\rVert_{L^2}\\
    &+ \{\textup{similar or easier terms}\}\\
    \lesssim& \int_0^t \lVert  F_{\geq 2^{J(t)}}(x) F_{\geq 2^{-\theta J(t)}}(D) e^{-i(t-s)\Delta} F_{\leq t^\alpha}(x) \rVert_{L^2 \to L^2} \lVert u \rVert_{H^1_x}\;ds\\
    &+ t^{1-\alpha\sigma} \lVert u \rVert_{L^\infty_t H^1_x}\\
    &+ \{\textup{similar or easier terms}\}\\
\end{split}\end{equation*}
so the problem reduces to proving bounds for $F_{\geq 2^{J(t)}}(x) F_{\geq 2^{-\theta J(t)}}(D) e^{-i(t-s)\Delta} F_{\leq t^\alpha}(|x|)$.  The kernel associated with this operator is
\begin{equation*}
    K_{t,s}(x,y) = \frac{F_{\geq J(t)}(x) F(|y| \leq t^\alpha)}{2\pi} \int e^{i((t-s)\xi^2 + (x-y)\xi)} F(\xi \geq 2^{-\theta J(t)})\;d\xi
\end{equation*}
Repeated integration by parts gives the bound
\begin{equation*}\begin{split}
    |K_{t,s}(x,y)| \lesssim& \left|\sum_{a + b = N} C_{a,b} \int \frac{(t-s)^at^{b\rho\theta }}{(2(t-s)\xi + (x-y))^{N+a}} F^{(b)}(\xi \geq 2^{- \theta J(t)})\;d\xi\right|\\
        \lesssim_N& \frac{t^{(N-1) \rho\theta}}{(2(t-s)t^{-\rho\theta} + (x-y))^{N-1}}
\end{split}\end{equation*}
Since $K_{t,s}$ is supported in the set $|x-y| \gtrsim t^\rho$, we find that
\begin{equation*}
    \lVert K_{t,s} \rVert_{L^2_{x,y}} \lesssim_N t^{\rho\theta} t^{-(N-2)(1-\theta)\rho}
\end{equation*}
Taking $N$ large shows that this term vanishes as $t \to \infty$, which completes the bounds for $\urem^{(\geq J(t))}$.

It only remains to bound the sum of the $\urem^{(j)}$.  We will focus on $\urem^{(j,\textup{fwd})}$, since the bounds for the backward in time terms are similar.  By~\Cref{lem:space-freq-loc-decay}, we see that for $2^j < t^\rho$
\begin{equation*}
    \lVert \Pin_j e^{-it\Delta} F_{\leq t^\alpha}(|x|) \rVert_{L^2 \to L^2} \lesssim_n t^{-N}
\end{equation*}
Since $V(x,s) u(x,s) \in L^\infty_s H^1_x$, this implies that
\begin{equation*}\begin{split}
    \lVert \partial_x \urem^{(j,\textup{fwd})}(t) \rVert_{L^2} =& \left\lVert \partial_x \int_0^t \Pin_j e^{-it\Delta} F_{\leq t^\alpha}(x) e^{-is\Delta} Vu(s)\;ds \right\rVert_{L^2}\\
    \lesssim& t^{1-N} \lVert u \rVert_{L^\infty_tH^1}
\end{split}\end{equation*}
Based on our definition of $J(t)$, $J(t) \leq 1 + \rho \log_2\jBra{t}$, so
\begin{equation*}
    \sum_{j=0}^{J(t)} \lVert \partial_x \urem^{(j)}(t) \rVert_{L^2} \lesssim (1 + \rho \log_2\jBra{t} ) t^{1-N}\lVert u \rVert_{L^\infty_tH^1} = o(\lVert u \rVert_{L^\infty_tH^1})
\end{equation*}
as required.

\subsection{Bounds for \texorpdfstring{$\uloc$}{u\_loc}}\label{sec:u-loc-bd-subsec}

We now show how to prove~\eqref{eqn:u-loc-bd}.  For the low-frequency part $\uloc^{(j,\textup{low})}$, we note that 
\begin{equation*}\begin{split}
    \lVert \jBra{x}^\theta \partial_x \uloc^{(j,\textup{low})} \rVert_{L^2}^2 \lesssim& 2^{2(\theta - 1) j} \lVert F'_{2^j}(|x|) F_{\leq 2^{-\theta j}}(|D|) v \rVert_{L^2}^2\\
    &+ 2^{2\theta j} \lVert F_{2^j}(|x|) F_{\leq 2^{-\theta j}}(|D|) \partial_x v \rVert_{L^2}^2
\end{split}\end{equation*}
with the obvious modifications for $j = 0$.  For the first term, we can sum directly to obtain
\begin{equation*}
    \sum_{j \geq 0} 2^{2(\theta - 1) j} \lVert F'_{2^j}(|x|) F_{\leq 2^{-\theta j}}(|D|) v \rVert_{L^2}^2 \lesssim \lVert v \rVert_{L^2}^2
\end{equation*}
Turning to the second term, we observe that by~\Cref{lem:half-freq-proj-supp-lem},
\begin{equation*}
    \lVert F_{2^j}(|x|) F_{\leq 2^{-\theta j}}(|D|) \partial_x \left((1-F_{\sim 2^j}(|x|)) v\right)\rVert_{L^\infty_t L^2}\lesssim_N 2^{-\theta j} 2^{-N(1+\theta)j} \lVert v \rVert_{L^\infty_t L^2_x}
\end{equation*}
so
\begin{equation*}\begin{split}
     \sum_{j \geq 0} 2^{2\theta j} \lVert F_{2^j}(|x|) F_{\leq 2^{-\theta j}}(|D|) \partial_x v \rVert_{L^2}^2 \lesssim& \sum_{j \geq 0} 2^{2\theta j} \left\lVert F_{2^j}(|x|) F_{\leq 2^{-\theta j}}(|D|) \partial_x \left(F_{\sim 2^j}(|x|) v\right) \right\rVert_{L^2}^2 \\
     +& \sum_{j \geq 0} 2^{-(1+2\theta)j} \lVert v \rVert_{L^2}^2\\
     \lesssim& \lVert v \rVert_{L^2}^2
\end{split}\end{equation*}
Combining these estimates, we see that
\begin{equation*}
    \left\lVert \sum_{j=0}^\infty \jBra{x}^\theta \partial_x \uloc^{(j,\textup{low})} \right\rVert_{L^2}^2 \lesssim \lVert v \rVert_{H^1}^2 < \infty
\end{equation*}
Thus, we only need to control $\uloc^{(j,\textup{fwd})}$ and $\uloc^{(j,\textup{bwd})}$.  The required bounds follow immediately from~\Cref{lem:in-out-lem} with $s' = 0, s = 1$:  For $\jBra{x}^\theta \partial_x \uloc^{(j,\textup{fwd})}$, we find that
\begin{equation*}\begin{split}
    \lVert \jBra{x}^\theta\partial_x \uloc^{(j,\textup{fwd})}\rVert_{L^2} =& \left\lVert \jBra{x}^\theta \partial_x \Pin_j \int_0^t e^{-i(t-s)\Delta} V(x,s) u(x,s) \;ds\right\rVert_{L^2}\\
    \lesssim& \lVert \jBra{x}^\sigma Vu \rVert_{L^\infty_t H^1_x} \lesssim \lVert u \rVert_{L^\infty_t H^1_x}
\end{split}\end{equation*}
and a similar bound holds for $\jBra{x}^\theta \partial_x \uloc^{(j,\textup{fwd})}$, which is sufficient to get the bound~\eqref{eqn:uloc-desired}.

\section{Higher-order symbol bounds}


We now show how to prove the higher-order symbol bounds~\eqref{eqn:uloc-higher-order}.  Intuitively, we would like to proceed by induction: Having proven that the localized part satisfies~\eqref{eqn:uloc-higher-order} for some $n$, we want to use the symbol bounds for $V$ together with~\Cref{lem:in-out-lem} to prove better decay for higher derivatives.  The problem is that the Duhamel integral for $\uloc^{(j)}$ involves $u$, while the decay bounds were only proved for $\uloc$, preventing us from iterating directly.

\subsection{Definition of $u_{\textup{loc},n}$}
In order to obtain symbol bounds, we must redefine the localized part of the solution.  Let us write
\begin{equation}
    u_{\textup{loc},1} := \uloc
\end{equation}
and define for $n > 1$
\begin{equation}\label{eqn:uloc-n-def}
    u_{\textup{loc},n} := \sum_{j = -\infty} ^{J(t)} u_{\textup{loc},n}^{(j)}
\end{equation}
with
\begin{equation}\label{eqn:uloc-n-j-def}\begin{split}
    u_{\textup{loc},n}^{(j)} =& \Plow_j v(t) + i \Pin_j \int_0^t e^{-i(t-s)\Delta} V u_{\textup{loc},n-1}(s)\;ds\\
    &- i \Pout_j \int_t^\infty e^{-i(t-s)\Delta} Vu_{\textup{loc},n-1}(s)\;ds\\
    =:& u_{\textup{loc},n}^{(j,\textup{low})} + u_{\textup{loc},n}^{(j,\textup{fwd})} + u_{\textup{loc},n}^{(j,\textup{bwd})}
\end{split}\end{equation}
We also define
\begin{equation}\label{urem-n-def}
    u_{\textup{rem},n}(t) = \urem^{(\textup{nr})}(t) + \urem^{(\geq J(t))}(t) + \sum_{j = 0}^{J(t)} u_{\textup{rem},n}^{(j)}(t)
\end{equation}
with
\begin{equation}\label{urem-n-j-def}\begin{split}
    u_{\textup{rem},n}^{(j)}(t) =& \urem^{(j)}(t) + i \Pin_j \int_0^t e^{-i(t-s)\Delta} V (u - u_{\textup{loc},n-1})(s)\;ds\\
    &- i \Pout_j \int_t^\infty e^{-i(t-s)\Delta} V (u - u_{\textup{loc},n-1})(s)\;ds\\
    =:& \urem^{(j)}(t) + u_{\textup{rem},n}^{(j,\textup{fwd})}(t) + u_{\textup{rem},n}^{(j,\textup{bwd})}(t)
\end{split}\end{equation}
As before, we have the identity
\begin{equation*}
    v(t) = u_{\textup{rem},n}(t) + u_{\textup{loc},n}(t)
\end{equation*}
so to complete the proof of~\Cref{thm:main-thm}, it suffices to show that
\begin{equation}\label{eqn:higher-order-u-loc-bd}
    \lVert \jBra{x}^{\theta k}\partial_x^k u_{\textup{loc},n} \rVert_{L^\infty_t L^2_x} < \infty,\qquad \textup{for } k = 0,1,2, \cdots n
\end{equation}
and that
\begin{equation}\label{eqn:higher-order-rem-cond}
    \lim_{t \to \infty} \lVert \partial_x u_{\textup{rem},n}(t) \rVert_{L^2} = 0
\end{equation}
The results of~\Cref{sec:energy-loc} state that~\eqref{eqn:higher-order-u-loc-bd} and~\eqref{eqn:higher-order-rem-cond} are true when $n = 1$.  To prove that they hold for $n > 1$, we proceed by induction and suppose that they hold for $n-1$.  We will prove these statements in the following two subsections.  The proof of~\eqref{eqn:higher-order-u-loc-bd} is given in~\Cref{sec:higher-order-loc-bds}, and amounts to an application of~\Cref{lem:in-out-lem} together with the strengthened inductive hypothesis
\begin{equation}\label{eqn:higher-order-loc-bd-strong}
    \lVert \jBra{x}^{\theta k}\partial_x^k u_{\textup{loc},n} \rVert_{L^\infty_t H^s_x} < \infty,\qquad \textup{for } k = 0,1,2, \cdots n,\;\; 0 \leq s < 1
\end{equation}
From here, it only remains to prove~\eqref{eqn:higher-order-rem-cond}.  We will say that a function $w(x,t)$ is \textit{remainder-type} if $\lim_{t \to \infty} \lVert \partial_x w \rVert_{L^2} = 0$.  Then,~\eqref{eqn:higher-order-rem-cond} is simply the statement that $u_{\textup{rem},n}$ is remainder-type.  To prove this, it suffices to prove that $\sum_{j=0}^{J(t)} u_{\textup{rem},n}^{(j,\textup{fwd})}$ and $\sum_{j=0}^{J(t)} u_{\textup{rem},n}^{(j,\textup{bwd})}$ are both remainder-type.  To help with this, let us define the projected Duhamel integral operators
\begin{equation}\label{eqn:proj-Duhamel-def}\begin{split}
    \mathcal{D}^{(j,\textup{fwd})} \phi(t) := i\Pin_j \int_0^t e^{-i(t-s)\Delta} V \phi(s)\;ds \\
    \mathcal{D}^{(j,\textup{bwd})} \phi(t) := -i \Pout_j \int_t^\infty e^{-i(t-s)\Delta} V \phi(s)\;ds \\
\end{split}
\end{equation}
so 
$$u_{\textup{rem},n}^{(j,\textup{fwd})} = \mathcal{D}^{(j,\textup{fwd})} (u - u_{\textup{rem},n-1})$$
and similarly for $u_{\textup{rem},n}^{(j,\textup{bwd})}$.  A quick calculation shows that
$$u(t) - u_{\textup{loc},n-1}(t) = u_{\textup{lin}}(t) + \ulow(t) + u_{\textup{rem},n-1}(t)$$
where $u_{\textup{lin}}(t) = e^{-it\Delta} \Omega_\text{free} u_0$ is the linearly scattering part of the solution.  Based on the inductive hypothesis,~\eqref{eqn:higher-order-rem-cond} will follow once we prove that $\sum_{j=0}^{J(t)} \mathcal{D}^{(j,\textup{fwd})} w$ and $\sum_{j=0}^{J(t)} \mathcal{D}^{(j,\textup{fwd})} w$ are remainder-type whenever $w$ is remainder-type or $w = e^{-it\Delta} \phi$ for some constant function $\phi$.  We carry out this proof in~\Cref{sec:higher-order-rem-decay}.

\subsection{Symbol bounds for $u_{\textup{loc},n}$}\label{sec:higher-order-loc-bds}

Our goal in this section is to prove that~\eqref{eqn:higher-order-loc-bd-strong} holds for all $n$.  For $n = 1$, this follows essentially from the same steps used in~\Cref{sec:u-loc-bd-subsec}: Because of the low-frequency projection, $\lVert \jBra{x}^\theta \partial_x \uloc^{(j, \textup{low})} \rVert_{L^\infty_t\dot{H}^1_x}$ has better decay (and hence better summability properties) than $\lVert \jBra{x}^\theta \partial_x \uloc^{(j, \textup{low})} \rVert_{L^\infty_tL^2_x}$, while~\Cref{lem:in-out-lem} still suffices to control $\uloc^{(j,\textup{fwd})}$ and $\uloc^{(j,\textup{bwd})}$ in $H^s$ ($0 \leq s < 1$) since $u$ (and thus $Vu$) are in $H^1$.  

We will now argue inductively to show that if~\eqref{eqn:higher-order-loc-bd-strong} holds for some $n$, then it also holds for with $n$ replaced by $n+1$.  Recalling the definitions~\cref{eqn:uloc-n-def,eqn:uloc-n-j-def} and arguing as in~\Cref{sec:u-loc-bd-subsec}, we find that
\begin{equation*}\begin{split}
    \left\lVert \jBra{x}^{\theta k}\partial_x^k u_{\textup{loc},n+1}^{(j, \textup{low})} \right\rVert_{L^\infty_t L^2_x} \lesssim& 2^{k(\theta -1)j} \lVert F^{(k)}_{2^j}(|x|) F_{\geq 2^{-\theta j}}(D) v \rVert_{L^2}^2\\
    &+ 2^{2k\theta j} F_{2^j}(|x|) F_{\geq 2^{-\theta j}}(D) \partial_x^k v \rVert_{L^2}^2\\
    &+ \{\textup{similar}\}
\end{split}\end{equation*}
is summable in $j$, which is compatible with~\eqref{eqn:higher-order-loc-bd-strong} for $s = 0$.  Since $u_{\textup{loc},n+1}^{(j, \textup{low})}$ is supported on low frequencies, the argument for $s > 0$ is simpler.  

It only remains to bound $u_{\textup{loc},n+1}^{(j, \textup{fwd})}$ and $u_{\textup{loc},n+1}^{(j, \textup{bwd})}$. The two bounds are similar, so we will only give the argument for the forward-in-time terms $u_{\textup{loc},n}^{(j, \textup{fwd})}$.  Observe that
\begin{equation*}\begin{split}
    \lVert \jBra{x}^{k\theta} \partial_x^k u_{\textup{loc},n+1}^{(j, \textup{fwd})}(t) \rVert_{L^\infty_t H^{s}_x} =& \left\lVert \jBra{x}^{k\theta} \partial_x^k \Pin_j \int_0^t e^{-i(t-s)\Delta} V u_{\textup{loc},n}(s)\;ds \right\rVert_{L^\infty_t H^s_x}
\end{split}\end{equation*}
with $s < 1$.  Applying~\Cref{lem:in-out-lem}, we find that
\begin{equation*}\begin{split}
    \sum_{j=0}^\infty \lVert \jBra{x}^{k\theta} \partial_x^k u_{\textup{loc},n+1}^{(j, \textup{fwd})}(t) \rVert_{L^\infty_t H^{s'}_x} \lesssim&  \sum_{m=0}^{k-1} \lVert \jBra{x}^{m\theta} \partial_x^m u_{\textup{loc},n} \rVert_{L^\infty_t H^s_x}\\
    <& \infty
\end{split}\end{equation*}  
Based on the symbol-type bounds~\eqref{eqn:V-symb-bds} for $V$ and the fact that $u_{\textup{loc},n}$ satisfies~\eqref{eqn:higher-order-loc-bd-strong}, we conclude that $u_{\textup{loc},n+1}^{(j, \textup{fwd})}(t)$ satisfies bounds compatible with~\eqref{eqn:higher-order-loc-bd-strong}.  Since $u_{\textup{loc},n+1}^{(j, \textup{bwd})}(t)$ obeys similar estimates, this concludes the proof of~\eqref{eqn:higher-order-loc-bd-strong} (and thus also of~\eqref{eqn:higher-order-u-loc-bd}).

\subsection{Proof that $u_{\textup{rem},n}$ is remainder-type}\label{sec:higher-order-rem-decay} 

We will prove~\Cref{eqn:higher-order-rem-cond} as a consequence of the following result:
\begin{lemma}\label{lem:decay}
    Suppose that $\phi \in L^\infty_t H^1_x$ has the property that for any compact set $K \subset \bbR$,
    \begin{equation}\label{eqn:decay-lemma-hypo}
        \lim_{t \to \infty} \lVert \phi(t) \rVert_{L^2(K)} + \lVert \partial_x \phi(t) \rVert_{L^2(K)} = 0
    \end{equation}
    Then,
    \begin{equation}\label{eqn:decay-lemma-conclusion}
        \sum_{j=0}^\infty \left(\left\lVert \partial_x \Djfwd \phi \right\rVert_{L^2} + \left\lVert \partial_x \Djbwd \phi \right\rVert_{L^2} \right) = 0
    \end{equation}
\end{lemma}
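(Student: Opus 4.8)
The plan is to exploit the fact that, at the cost of an arbitrarily small loss in the spatial weight, the source $V\phi$ genuinely decays, and to feed this into the regularity gain of~\Cref{lem:in-out-lem}. Throughout, fix an auxiliary exponent $\sigma'\in(2,\sigma)$, set $\epsilon:=\sigma-\sigma'>0$, and fix a small $s'_0\in(0,1/2)$; every application of~\Cref{lem:in-out-lem} below is made with its parameter ``$\sigma$'' equal to $\sigma'$ (legitimate since $\sigma'>2$), with $n=1$, output regularity $1/2$ and input regularity $s'_0$, together with the trivial bound $\lVert\partial_x(\cdot)\rVert_{L^2}\le\lVert\jBra{x}^\theta\partial_x(\cdot)\rVert_{H^{1/2}}$. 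The preliminary observation is that $\lVert\jBra{x}^{\sigma'}V\phi(s)\rVert_{H^1_x}\to 0$ as $s\to\infty$: splitting $V\phi=VF_{\le R}(|x|)\phi+VF_{>R}(|x|)\phi$ and using that~\eqref{eqn:V-bds} gives $|\jBra{x}^{\sigma'}V|+|\partial_x(\jBra{x}^{\sigma'}V)|\lesssim\jBra{x}^{-\epsilon}$, the far piece satisfies $\lVert\jBra{x}^{\sigma'}VF_{>R}(|x|)\phi(s)\rVert_{H^1}\lesssim R^{-\epsilon}\lVert\phi\rVert_{L^\infty_t H^1_x}$ uniformly in $s$, while the near piece is supported in the fixed compact set $\{|x|\lesssim R\}$ and hence $\lVert\jBra{x}^{\sigma'}VF_{\le R}(|x|)\phi(s)\rVert_{H^1}\lesssim_R\lVert\phi(s)\rVert_{L^2(|x|\lesssim R)}+\lVert\partial_x\phi(s)\rVert_{L^2(|x|\lesssim R)}$, which tends to $0$ as $s\to\infty$ by~\eqref{eqn:decay-lemma-hypo}; letting $R\to\infty$ gives the claim.

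The backward-in-time sum is then immediate. Since the integral defining $\Djbwd\phi(t)$ only sees $\phi(s)$ with $s\ge t$, one may replace $V\phi$ by $V\phi\cdot\bbOne_{[t,\infty)}(s)$ before applying the outgoing half of~\Cref{lem:in-out-lem}, and the preliminary observation gives $\sum_{j\ge 0}\lVert\partial_x\Djbwd\phi(t)\rVert_{L^2}\lesssim\sup_{s\ge t}\lVert\jBra{x}^{\sigma'}V\phi(s)\rVert_{H^{s'_0}_x}\le\sup_{s\ge t}\lVert\jBra{x}^{\sigma'}V\phi(s)\rVert_{H^1_x}\to 0$ as $t\to\infty$.

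The forward-in-time sum is the substantial case, because $\int_0^t$ also sees the early times $s\ll t$ at which $\phi(s)$ need not be small. The plan is to split $V\phi=VF_{>R}(|x|)\phi+VF_{\le R}(|x|)\phi$ and, for the near part, further split $\int_0^t=\int_0^T+\int_T^t$. The far part is treated for the whole integral at once by~\Cref{lem:in-out-lem}, contributing $\lesssim R^{-\epsilon}\lVert\phi\rVert_{L^\infty_t H^1_x}$ uniformly in $t$. The late near part, $i\Pin_j\int_T^t e^{-i(t-s)\Delta}VF_{\le R}(|x|)\phi\;ds$, is handled by applying~\Cref{lem:in-out-lem} to $VF_{\le R}(|x|)\phi\cdot\bbOne_{[T,\infty)}(s)$, which bounds its total contribution by $\sup_{s\ge T}\lVert\jBra{x}^{\sigma'}VF_{\le R}(|x|)\phi(s)\rVert_{H^1_x}$, tending to $0$ as $T\to\infty$ (again by~\eqref{eqn:decay-lemma-hypo}), uniformly in $t$. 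Finally, writing $g(s):=VF_{\le R}(|x|)\phi(s)$, which is supported in a fixed ball $|x|\le C_0R$ and bounded in $H^1_x$ uniformly in $s$, the early near part is $i\Pin_j\int_0^T e^{-i(t-s)\Delta}g(s)\;ds$, and I would split the $j$-sum at a large index $J_1$ with $2^{J_1}\gg R$. For $j\ge J_1$ the source $g$ is supported well inside $|x|\le 2^{j-10}$, so the term is exactly of the type~\eqref{eqn:x-dx-Pin-bd-1}, and the kernel bound derived there (taking the number of integrations by parts large, and noting that restricting the time integral to a subinterval only improves the $L^1_\tau$ estimate of the kernel) gives $\sum_{j\ge J_1}\lVert\partial_x\Pin_j\int_0^T e^{-i(t-s)\Delta}g\;ds\rVert_{L^2}\lesssim 2^{-cJ_1}\lVert\phi\rVert_{L^\infty_t H^1_x}$ for some $c>0$, uniformly in $t$ and $T$. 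For each of the finitely many $j<J_1$, one rewrites the term as $\partial_x\Pin_j e^{-it\Delta}\Psi$ with $\Psi:=\int_0^T e^{is\Delta}g(s)\;ds$ a \emph{fixed} element of $H^1$; since $\Pin_j$ localizes its output to $|x|\lesssim 2^j$ and to frequencies $\gtrsim 2^{-\theta j}$ bounded away from the origin, a density argument (approximating $\Psi$ in $H^1$ by a Schwartz function and using nonstationary phase exactly as in the proof of~\Cref{lem:space-freq-loc-decay}) yields $\lVert\partial_x\Pin_j e^{-it\Delta}\Psi\rVert_{L^2}\to 0$ as $t\to\infty$, so the finite sum over $j<J_1$ also tends to $0$. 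Assembling: given $\eta>0$, choose $R$ so the far part is $<\eta$, then $J_1$ so the $j\ge J_1$ early near part is $<\eta$, then $T$ so the late near part is $<\eta$, and finally $t$ large so the finitely many $j<J_1$ early near terms sum to $<\eta$; this gives $\sum_{j\ge 0}\lVert\partial_x\Djfwd\phi(t)\rVert_{L^2}\to 0$, completing the proof.

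The hard part will be the early near term: it is the only piece that is not made small by a choice of weight, frequency, or length scale but only by letting $t\to\infty$, and it still has to be controlled for all $j$ at once. The two-tier device above is what I expect to resolve it — the tail $j\ge J_1$ is killed uniformly in $t$ by the spatial separation between the (now compactly supported) source and the region $|x|\sim 2^j$ where $\Pin_j$ lives, which lets us recycle the kernel estimate from~\Cref{lem:in-out-lem}, while the finitely many surviving low modes are killed as $t\to\infty$ by the ordinary local-in-space dispersion of the fixed $H^1$ profile $\int_0^T e^{is\Delta}g(s)\,ds$ — and the whole argument closes only because the parameters $R$, $J_1$, $T$, and $t$ are selected in exactly that order, each depending on the previous ones.
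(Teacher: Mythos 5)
Your proof is correct, and it takes a genuinely different route from the paper's. The paper works directly with a dyadic decomposition in space ($\ell$) and frequency ($k$), splitting the time integral at $t - C2^{\ell-k}$ to isolate near-time and far-time contributions $A^j_{\ell,k}$ and $B^j_{\ell,k}$ (plus a term $C^j_k$ for the low spatial scales when $j>0$), proves uniform-in-$t$ bounds summable over $(j,k,\ell)$ together with pointwise-in-$(j,k,\ell)$ decay as $t\to\infty$, and then closes by dominated convergence. Your proof instead isolates a cleaner structural mechanism: by giving up an arbitrarily small power of the weight (passing from $\sigma$ to $\sigma'<\sigma$), the weighted source $\jBra{x}^{\sigma'}V\phi(s)$ genuinely decays to zero in $H^1_x$, which makes the backward-in-time sum immediate (the integrand only sees large $s$) and reduces the forward-in-time sum to the early-time contribution, handled by a two-tier split into a uniform $j\geq J_1$ tail (where the source is spatially well separated from $\Pin_j$, so the kernel bound from~\eqref{eqn:x-dx-Pin-bd-1} applies) and a finite collection of $j < J_1$ modes killed by local decay of the free flow applied to the fixed $H^1$ profile $\int_0^T e^{is\Delta} g(s)\,ds$. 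What your approach buys is a sharper organization and a trivialization of the backward channel; what the paper's approach buys is a self-contained kernel analysis that does not appeal to the $\sigma'$-loss trick or to reapplying Lemma~\ref{lem:in-out-lem} with a restricted time window (though you correctly note that the proof of that lemma only improves under restriction of the $\tau$-integral, so the reuse is legitimate). Both arguments correctly exploit the compact-set decay hypothesis~\eqref{eqn:decay-lemma-hypo}; the order of parameter selection $R\to J_1\to T\to t$ in your argument is the essential point and you handle it carefully.
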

\begin{proof}
    We will begin with the proof for $j = 0$.  Based on the definition of $\Pin_0$ and the improved decay of $\partial_x V$, we have that
    \begin{subequations}\begin{align}
        \left\lVert \partial_x \Dzfwd \phi \right\rVert_{L^2} \leq& \left\lVert \Pin_0 \int_0^t e^{-i(t-s)\Delta} V \partial_x \phi(s)\;ds \right\rVert_{L^2}\notag\\
                                                                &+ \{\textup{similar or easier}\}\notag\\
                                                              \leq& \sum_{k \geq 0}\left\lVert \Pin_0 P_k \int_0^t e^{-i(t-s)\Delta} F_{\leq 1}(|x|) V \partial_x\phi(s)\;ds \right\rVert_{L^2}\label{eqn:Dzfwd-ell0}\\
                                                                  &+\sum_{\substack{k \geq 0\\ \ell > 0}} \left\lVert \Pin_0 P_k \int_0^t e^{-i(t-s)\Delta} F_{2^\ell}(|x|)V \partial_x \phi(s)\;ds \right\rVert_{L^2} \label{eqn:Dzfwd-ell}\\
                                                                  &+ \{\textup{similar or easier}\}\notag
    \end{align}\end{subequations}
    We will focus on controlling the second sum~\eqref{eqn:Dzfwd-ell}; the estimates for~\eqref{eqn:Dzfwd-ell0} are similar.  Let us further decompose~\eqref{eqn:Dzfwd-ell} as
    \begin{equation*}
        \eqref{eqn:Dzfwd-ell} = \sum_{\substack{k \geq 0\\ \ell > 0}} A_{\ell,k}(t) + B_{\ell,k}(t)
    \end{equation*}
    with
    \begin{equation*}\begin{split}
        A_{\ell,k}(t) =& \left\lVert \Pin_0 P_k \int_0^{t - C2^{\ell-k}} e^{i(t-s)\Delta} F_{2^\ell}(|x|) V \partial_x\phi \right\rVert_{L^2_x}\\
        B_{\ell,k}(t) =& \left\lVert \Pin_0 P_k \int_{t - C2^{\ell-k}}^t e^{i(t-s)\Delta} F_{2^\ell}(|x|) V \partial_x\phi \right\rVert_{L^2_x}
    \end{split}\end{equation*}
    For $B_{\ell,k}$, we see immediately that
    $$\sup_t|B_{\ell,k}(t)| \lesssim 2^{\ell-k} \lVert F_{2^\ell}(|x|) V \partial_x \phi \rVert_{L^\infty_t L^2_x} \lesssim 2^{(1-\sigma)\ell} 2^{-k} \lVert \partial_x \phi \rVert_{L^\infty_t L^2_x}$$
    which is summable in $\ell$ and $k$.  On the other hand, by~\eqref{eqn:decay-lemma-hypo} $\lVert F_{2^\ell}(|x|) \partial_x \phi(t) \rVert_{L^2} \to 0$ as $t \to \infty$, so for fixed $\ell$ and $k$ we have that
    $$\lim_{t \to \infty} |B_{\ell,k}(t)| \lesssim 2^{\ell-k} \lim_{t \to \infty} \lVert F_{2^\ell}(|x|) \partial_x \phi \rVert_{L^\infty(t-C2^{-k}, t; L^2_x)} = 0$$
    Applying the dominated convergent theorem, we now conclude that
    $$\lim_{t \to \infty} \sum_{\substack{\ell > 0\\k \geq 0}} B_{\ell,k}(t) = 0$$
    For $A_{\ell,k}$, we observe that the kernel for $\Pin_0 P_k e^{-i\tau \Delta} F_{\ell}(|x|)$ is
    \begin{equation}\label{eqn:A-ell-k-kernel}
        \frac{F_{\leq 1}(|x|) F_{2^\ell}(|y|)}{2\pi} \int e^{i\tau\xi^2} e^{i\xi(x-y)} F_{\geq 1}(|\xi|) F_{2^k}(|\xi|)\;d\xi
    \end{equation}
    Based on the localization of $x$ and $y$, we see that the phase in the integral will be nonstationary for $\tau \gg 2^{\ell-k}$, so repeated integration by parts gives the estimate
    \begin{equation*}\begin{split}
        \lVert \Pin_0 P_k e^{-i\tau\Delta} F_{2^\ell}(|x|) \rVert_{L^2 \to L^2} \lesssim& 2^{\ell/2}\sum_{a+b=N} \sup_{|x|\lesssim 1,|y| \sim 2^\ell} \int \frac{|\tau|^a2^{-bk}}{|2\tau\xi + (x-y)|^{N+a}} F_{2^k}^{(b)}(|\xi|)\;d\xi\\
        \lesssim& 2^{\ell/2} 2^{(1-2N)k} \tau^{-N}
    \end{split}\end{equation*}
    Thus,
    \begin{equation*}
        \sup_t |A_{\ell, k}(t)| \lesssim 2^{\ell/2} 2^{(1-2N)k} 2^{(1-N)(\ell-k)} \lVert \partial_x \phi \rVert_{L^\infty_t L^2}
    \end{equation*}
    is summable in $\ell$ and $k$.  Dividing the integral into $0 \leq s \leq t/2$ and $s \geq t/2$, we have that
    \begin{equation*}\begin{split}
        A_{\ell,k} \leq& \left\lVert \Pin_0 P_k \int_0^{t/2} e^{i(t-s)\Delta} F_{2^\ell}(|x|) V \partial_x\phi \right\rVert_{L^2}\\
        &+ \left\lVert \Pin_0 P_k \int_{t/2}^{t- C 2^{\ell-k}} e^{i(t-s)\Delta} F_{2^\ell}(|x|) V \partial_x\phi \right\rVert_{L^2}\\
        \lesssim& 2^{\ell/2} 2^{(1-2N)k} t^{1-N}\lVert \partial_x \phi \rVert_{L^\infty_t L^2_X} \\
        &+ 2^{\ell/2} 2^{(1-2N)k} 2^{(1-N)(\ell-k)} \lVert F_{\sim \ell}(|x|) \partial_x \phi(s) \rVert_{L^\infty_s(t/2, t; L^2_x)}
    \end{split}\end{equation*}
    Taking limits, we see that $A_{\ell,k}(t) \to 0$ for each $\ell$ and $k$, so 
    $$\lim_{t \to \infty} \sum_{\substack{\ell\geq 0\\ k \geq 0}} A_{\ell,k} = 0$$
    In particular, this shows that
    $$\lim_{t \to \infty} \lVert \partial_x \Dzfwd \phi \rVert_{L^2} = 0$$
    
    For $j > 0$, we write
    \begin{equation*}\begin{split}
        \lVert \partial_x \Djfwd \phi \rVert_{L^2} \leq& \sum_{\substack{\ell > j - 10\\ k \geq -\theta j}} \underbrace{\left\lVert \Pin_j P_k \int_0^{t-C2^{\ell-k}} e^{-i(t-s)\Delta} F_{f2^\ell}(|x|) V \partial_x \phi \;ds \right\rVert_{L^2}}_{A^j_{\ell,k}}\\
        &+ \sum_{\substack{\ell > j - 10\\ k \geq -\theta j}} \underbrace{\left\lVert \Pin_j P_k \int_{t-C2^{\ell-k}}^t e^{-i(t-s)\Delta} F_{2^\ell}(|x|) V \partial_x \phi \;ds \right\rVert_{L^2}}_{B^j_{\ell,k}}\\
        &+ \sum_{k \geq -\theta j} \underbrace{\left\lVert \Pin_j P_k \int_0^t e^{-i(t-s)\Delta} F_{\leq 2^{j -10}}(|x|) V \partial_x \phi \;ds \right\rVert_{L^2}}_{C^j_k}\\
        &+ \{\textup{similar or easier}\}\\
    \end{split}
    \end{equation*}
    As before, we see that
    \begin{equation*}
        |B^j_{\ell,k}(t)| \lesssim 2^{(1-\sigma)\ell} 2^{-k} \lVert \partial_x \phi \rVert_{L^\infty_t L^2_x}
    \end{equation*}
    so,
    \begin{equation*}
        \sum_{\substack{\ell > j - 10\\ k \geq -\theta j}} |B^j_{\ell,k}(t)| \lesssim 2^{(1+\theta - \sigma)j} \lVert \partial_x \phi \rVert_{L^\infty_t L^2_x} \in \ell^1_j
    \end{equation*}
    and, since $ B^j_{\ell,k}(t) \to 0$ for each fixed $j,k,\ell$,
    \begin{equation*}
        \lim_{t \to \infty} \sum_{j = 1}^\infty\sum_{\substack{\ell > j - 10\\ k \geq -\theta j}} |B^j_{\ell,k}(t)| = 0
    \end{equation*}
    For $A^j_{\ell,k}$, a slight modification of the argument above shows that
    \begin{equation*}
        \lVert \Pin_j P_k e^{-i\tau \Delta} F_\ell(|x|) \rVert_{L^2 \to L^2} \lesssim 2^{j/2} 2^{\ell/2} 2^{(1-2N)k} \tau^{-N}
    \end{equation*}
    so
    \begin{equation*}
        A^j_{\ell,k}(t) \lesssim 2^{j/2} 2^{\ell/2} 2^{(1-2N)k} 2^{(1-N)(\ell-k)} \lVert \partial_x \phi \rVert_{L^2}
    \end{equation*}
    The right-hand side is summable in $j,k,$ and $\ell$ for $\ell > j - 10$, $k \geq -\theta j - 10$, and splitting the time integral into $0 \leq s \leq t/2$ and $s \geq t/2$ shows that $A^j_{\ell,k}(t) \to 0$ as $t \to \infty$, so this term is also acceptable.  Finally, for $C^j_k$, we observe that the kernel for $\Pin_j P_k e^{-i\tau \Delta} F_{\leq 2^{j-10}}(|x|)$ is given by
    \begin{equation*}
        K^j_k(x,y) = \frac{F_{2^j}(x) F_{\leq 2^{j-10}}(|x|)}{2\pi} \int e^{i\tau \xi^2} e^{i\xi(x-y)} F_{2^k}(|\xi|) F_{\geq 2^{-\theta j}}(\xi)\;d\xi + \{\textup{similar}\}
    \end{equation*}
    For $\tau \geq 0$ and $(x,y)$ in the support of $K^j_k$, the phase is nonstationary, and repeated integration by parts gives
    \begin{equation*}
        \lVert K^j_k \rVert_{L^2_{x,y}} \lesssim 2^{(j+k)/2} \frac{2^{-Nk}}{(2^k \tau + 2^j)^N} 
    \end{equation*}
    Thus, integrating shows that
    \begin{equation*}
        C^j_k \lesssim 2^{(1/2-N)(j+k)} 2^{j-k} \lVert \partial_x \phi \rVert_{L^\infty_t L^2_x}
    \end{equation*}
    which is summable in $j$ and $k \geq -\theta j - 10$.  Moreover, dividing the integral into $0 \leq s \leq t/2$ and $t/2 \leq s \leq t - C2^{j-k}$ and using~\eqref{eqn:decay-lemma-hypo} again shows that
    $$\lim_{t \to \infty}C^j_k(t) = 0$$
    This proves~\eqref{eqn:decay-lemma-conclusion} for $\Djfwd \phi$.  The argument for $\Djbwd \phi$ is analogous.
\end{proof}

Thus, it only remains to prove that $u_\text{lin}$ and $u_{\text{rem}, n}$ satisfy the hypotheses of~\Cref{lem:decay}.  The $L^\infty_t H^1$ bound for $u_\text{lin}$ is immediate.  For $u_{\text{rem}, n}$, we will proceed by induction.  For $n=1$, we have that
\begin{equation*}
    u_{\text{rem}, 1} = u - u_{\text{lin}} - \uloc
\end{equation*}
Recall that $\uloc = \sum_{j=0}^{J(t)} \Plow_j u(t) +  \uloc^{(j, \text{fwd})} + \uloc^{(j, \text{fwd})}$.  Now, since $\theta < 1$,~\Cref{lem:half-freq-proj-supp-lem} implies that
\begin{equation*}
    \lVert \Plow_j f \rVert_{L^2} = \lVert \Plow_j F_{\sim 2^j}(|x|) f \rVert_{L^2} + O(2^{-Nj} \lVert f \rVert_{L^2})
\end{equation*}
so
\begin{equation*}
    \sup_t \left\lVert \sum_{j = 0}^{J(t)}\Plow_j u(t) \right\rVert_{L^2_x}^2 \lesssim \sup_t \sum_{j=0}^\infty \left(\lVert F_{\sim 2^j}(|x|) u \rVert_{L^2}^2 + 2^{-2Nj} \lVert u \rVert_{L^2}^2\right) \lesssim \lVert u \rVert_{L^\infty_tL^2_x}^2
\end{equation*}
is bounded.  Similarly, recalling that
\begin{equation*}\begin{split}
    \uloc^{(j,\textup{fwd})}(t) =& \Pin_j (u(t) - e^{it\Delta} u_0)\\
    \uloc^{(j,\textup{bwd})}(t) =& \Pout_j (u(t) - u_\textup{lin}(t)) 
\end{split}\end{equation*}
and using~\Cref{lem:in-out-sum-bdds}, we see that $\uloc \in L^\infty_t H^1_x$.  Since $u, u_\text{lin} \in L^\infty_tH^1_x$ as well, we conclude that $u_{\textup{rem}, 1} \in L^\infty_t H^1_x$.  

For $n > 1$, examining the proof of~\Cref{lem:decay} shows that $\partial_x u_{\textup{rem}, n} \in L^\infty_t L^2_x$ provided $u_{\text{rem},n-1} \in L^\infty_t H^1_x$.  Moreover, by removing the derivative and repeating the argument, we also see that $u_{\textup{rem}, n} \in L^\infty_t L^2_x$, as required.

Thus, it only remains to show that $\ulin$ and $u_{\text{rem}, n}$ satisfy~\eqref{eqn:decay-lemma-hypo}.  For $u_{\text{rem}, n}$, this again follows from the inductive assumption that $\lVert \partial_x u_{\textup{rem}, n}(t) \rVert_{L^2} = 0$ together with the Gagliardo-Nirenberg inequality.  For $\ulin$, the required decay follows immediately from the RAGE theorem.

\bibliography{sources}
\bibliographystyle{plain}

\end{document}